\tikzset{
    lablrot/.style={anchor=north, rotate=-90, inner sep=1.5mm},
    lablrotsouth/.style={anchor=south, rotate=-90, inner sep=1.5mm}
}
\numberwithin{equation}{section}
\theoremstyle{plain}
\newtheorem{theorem}[equation]{Theorem}
\newtheorem*{theorem*}{Theorem}
\newtheorem{proposition}[equation]{Proposition}
\newtheorem{lemma}[equation]{Lemma}
\newtheorem*{lemma*}{Lemma}
\newtheorem*{proposition*}{Proposition}
\newtheorem*{corollary*}{Corollary}
\newtheorem{corollary}[equation]{Corollary}
\newtheorem{maintheorem}{Theorem}
\theoremstyle{definition}
\theoremstyle{remark}
\newtheorem*{remark}{Remark}
\newcommand{\dd}{\mathrm{d}}
\newcommand{\BO}{\mathcal{O}}
\newcommand\HH{\mathrm{H}}
\newcommand{\hh}{\mathrm{h}}
\newcommand{\CC}{\mathbb{C}}
\newcommand{\ZZ}{\mathbb{Z}}
\newcommand{\QQ}{\mathbb{Q}}
\newcommand{\PP}{\mathbb{P}}
\newcommand\restr[1]{\raisebox{-.5ex}{$|$}_{#1}}
\newcommand{\sing}{\mathrm{sing}}
\newcommand{\Sing}{\mathrm{Sing}}
\newcommand{\sm}{\mathrm{sm}}
\newcommand{\GG}{\mathcal{G}}
\newcommand{\EE}{\mathscr{E}}
\newcommand{\AAA}{\mathbb{A}}
\DeclareMathOperator{\divv}{div}
\DeclareMathOperator{\Ext}{Ext}
\newcommand{\sheafhom}{\mathscr{H}\kern -.5pt om}
\DeclareMathOperator{\Id}{Id}
\newcommand{\IC}{\mathrm{IC}}
\newcommand{\Nm}{\mathrm{Nm}}
\newcommand{\Pic}{\mathrm{Pic}}
\DeclareMathOperator{\Prym}{Prym}
\newcommand{\dec}{\mathrm{dec}}
\newcommand{\ud}{{\underline{d}}}
\newcommand{\Nd}{{N_{\underline{d}}}}
\newcommand{\tNd}{{\tilde{N}_{\ud}}}
\newcommand{\PPP}{{\overline{\mathrm{P}}\mathrm{ic}^\ud(C)}}
\newcommand{\PPN}{\PPP_N}
\newcommand{\SE}{\mathscr{S}}
\newcommand{\BE}{\mathscr{BE}}
\newcommand{\Ker}{\mathrm{Ker}}
\newcommand{\mult}{\mathrm{mult}}
\DeclareMathOperator{\Bl}{Bl}
\newcommand{\ns}{\mathfrak{X}}
\newcommand{\Xiasing}{\Xi_{\sing,\mathrm{ad}}}
\newcommand{\Wasing}{W_{\sing,\mathrm{ad}}}
\newcommand{\Wtasing}{\tilde{W}_{\sing,\mathrm{ad}}}
\title{The boundary of the bielliptic Prym locus}
\author{Constantin Podelski}
\begin{document}
\maketitle
\begin{abstract}
We study the conormal geometry theta divisors of certain singular bielliptic curves. We apply these results to the boundary components $\SE_\ud$ of the bielliptic Prym locus. We obtain results on the Gauss map, compute the Chern-Mather class and the characteristic cycle of the intersection complex of the corresponding Prym theta divisor.
\end{abstract}

\tableofcontents

\section*{Introduction}
\label{sec:intro}
\addcontentsline{toc}{section}{\nameref{sec:intro}}

Let $\mathcal{A}_g$ denote the moduli space of $g$-dimensional principally polarized abelian varieties (ppav's for short) over the complex numbers. The \emph{biellitpic Prym} locus $\BE_g\subset \mathcal{A}_g$ is defined as the closure in $\mathcal{A}_g$ of the locus of Prym varieties of étale double covers of bielliptic curves, i.e. curves admitting a double cover to an elliptic curve $E$. In \cite{podelski2023GaussEgt}, they introduce the boundary components $\SE_\ud \subset \BE_g$ for $\ud=(d_1,\dots,d_n)$ with $\deg \ud=g$. These correspond to degenerations of the above situation where the ellitpic curve $E$ degenerates to an $n$-cycle of $\PP^1$'s, i.e. $E$ has $n$ components, which are rational, and its dual graph is the cyclic $n$-graph. By \cite[Th. 2]{podelski2023GaussEgt}, two cases are of particular interest from the point of view of the Schottky problem: When $\ud=(g)$ or $\ud=(1,g-1)$, for a general $(P,\Xi)\in \SE_\ud$, the degree of the Gauss map is the same as for non-hyperelliptic Jacobians (and these are the only values of $\ud$ for which this happens). In the present paper, we carry out a detailed study of the conormal geometry of the Prym theta divisor in these two cases. The results will enable us to show in a subsequent paper that the Tannakian representation associated to these ppav's (as in \cite{Kraemer2021MicrolocalGauss2}) differs from that of Jacobians. Note that Pryms in $\SE_\ud$ are \emph{never} Jacobians when $g\geq 4$ by \cite{Shokurov1984PrymVarieties}. \par 
Let $(A,\Theta)\in \mathcal{A}_g$ and $Z\subset A$ a subvariety. We define the \emph{conormal variety} to $Z$ by
\[ \Lambda_Z\coloneqq \overline{ \{(x,\xi)\in T^\vee A\,|\, x\in Z_\sm \,, \xi \bot T_x Z \} } \subset T^\vee A \,. \]
The projectivized conormal variety is the projectivization $\PP\Lambda_Z \subset \PP T^\vee A$. Translations induce a canonical trivialization of the cotangent bundle $T^\vee A=A\times V$, where $V\coloneqq T^\vee_0 A$. We define the Gauss map attached to $Z$ as the projection
\[ \gamma_Z: \PP\Lambda_Z \to \PP V \,. \]
Let $q:A\times \PP V \to A$ be the projection onto the first factor and $h\coloneqq c_1(\BO_{\PP V}(1))\in \HH^2(\PP V,\QQ)$. For $r\geq0$, the $r$-th Chern-Mather class of $Z$ is defined as
\[ c_{M,r}(Z)\coloneqq q_\ast \left( h^r\cap [\PP\Lambda_Z] \right) \in \HH_{2r}(A,\QQ) \,, \]
where $h$ is pulled back to $T^\vee A$ in the obvious way. The degree of the Gauss map is by definition the degree of the $0$-th Chern-Mather class. We extend the result of \cite[Th. 2]{podelski2023GaussEgt} to Chern-Mather classes of higher dimension:
\begin{maintheorem}[\ref{Thm: Prym: Chern-Mather of Prym theta}]\label{Maintheorem: Prym: Chern-Mather of Prym theta}
Let $(P,\Xi)\in \SE_{g}\cup \SE_{1,g-1}$, then the Chern-Mather classes of $\Xi$ are given by
\[ c_{M,r}(\Xi)= \frac{\xi^{g-r}}{(g-r)!} \binom{2g-2r-2}{g-r-1}\cap [P]\in \HH_{2r}(P,\QQ)  \]
for $r\geq 1$, where $\xi=c_1(\BO_P(\Xi))\in \HH^2(P,\QQ)$.
\end{maintheorem}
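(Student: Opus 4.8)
The plan is to compute the class of the projectivized conormal variety $\PP\Lambda_\Xi\subset P\times\PP V$ (with $V=T_0^\vee P$, so $\PP V\cong\PP^{g-1}$) and to extract its K\"unneth components. Recall that $\PP\Lambda_\Xi$ is irreducible of dimension $g-1$, being the closure of the conormal over $\Xi_\sm$; it carries the Gauss map $\gamma_\Xi\colon\PP\Lambda_\Xi\to\PP V$, while $q$ restricts on it to a birational morphism onto $\Xi$. Decompose in the K\"unneth ring of $\HH^{2g}(P\times\PP V)$:
\[
[\PP\Lambda_\Xi]=\sum_{i=0}^{g-1}\beta_i\otimes h^i,\qquad \beta_i\in\HH^{2g-2i}(P)\cong\HH_{2i}(P).
\]
Since $q_\ast$ annihilates every K\"unneth summand except the one pairing with the point class of $\PP V$, the definition of $c_{M,r}$ simply reads off the component $\beta_r$, i.e.\ $c_{M,r}(\Xi)=\beta_r$. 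Thus the theorem amounts to the identity $\beta_r=\binom{2g-2r-2}{g-r-1}\frac{\xi^{g-r}}{(g-r)!}\cap[P]$ for $r\ge1$; here $r=g-1$ is the tautology $\beta_{g-1}=[\Xi]=\xi\cap[P]$, and $r=0$ reproduces the Gauss-degree statement $\deg\gamma_\Xi=\binom{2g-2}{g-1}$ of \cite[Th.~2]{podelski2023GaussEgt}, which both guides and checks the computation.

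For the $\beta_r$ I would use the explicit description of $\PP\Lambda_\Xi$ produced by the conormal analysis of the singular bielliptic theta divisor --- the technical heart of the paper. For $\ud=(g)$ and $\ud=(1,g-1)$ this should present $\PP\Lambda_\Xi$ as the image of a generically injective morphism $f\colon\mathfrak{X}\to P\times\PP V$ from an explicit smooth projective model $\mathfrak{X}$ of dimension $g-1$, built from the \'etale double cover $\tilde C\to C$ (for instance a symmetric product of the normalization of $\tilde C$, or a simple modification of one, so that its cohomology is governed by Macdonald's formula). Writing $a\coloneqq q\circ f\colon\mathfrak{X}\to P$ (an Abel--Prym-type map with image $\Xi$) and $s\colon\mathfrak{X}\to\PP V$ for the composition of $f$ with the other projection (a ``Prym--canonical span'' map, sending an effective cycle to the linear span of its image under the derivative-of-theta map), we have $[\PP\Lambda_\Xi]=f_\ast[\mathfrak{X}]$ and, by the projection formula,
\[
\beta_r=a_\ast\big(v^{\,g-1-r}\cap[\mathfrak{X}]\big),\qquad v\coloneqq s^\ast c_1(\BO_{\PP V}(1))\in\HH^2(\mathfrak{X}).
\]

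What remains is intersection theory on $\mathfrak{X}$. First express $v$ and $u\coloneqq a^\ast\xi$ in $\HH^2(\mathfrak{X})$ in terms of the standard generators (point classes and the pullback of $\theta$), using the Poincar\'e-type formula for the Abel--Prym map and the definition of $s$; I expect $v$ to differ from $u$ only by a term supported on the diagonal divisor, just as the span map $C^{(g-1)}\dashrightarrow(\PP^{g-1})^\vee$ does for a non-hyperelliptic Jacobian. Then push forward via Macdonald's formula together with the Poincar\'e formula for $a$: the class $a_\ast(v^{\,g-1-r}\cap[\mathfrak{X}])$ lands in the subring of $\HH^\bullet(P)$ generated by $\xi$, and expanding $v^{\,g-1-r}$ and integrating the resulting monomials against powers of $\theta$ produces the central binomial coefficient $\binom{2g-2r-2}{g-r-1}=\binom{2(g-1-r)}{g-1-r}$ times the Poincar\'e class $\frac{\xi^{g-r}}{(g-r)!}\cap[P]$. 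In effect this says that $\PP\Lambda_\Xi$ has the same K\"unneth class as the projectivized conormal variety of the theta divisor of a non-hyperelliptic genus-$g$ Jacobian, and the computation is the one behind $\deg\gamma_{W_{g-1}}=\binom{2g-2}{g-1}$ carried out in every K\"unneth degree.

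The substance is not this bookkeeping but justifying the model $f$ and controlling its defects. One must (i) describe $\Xi$ and its singular locus explicitly enough, for a general $(P,\Xi)\in\SE_g\cup\SE_{1,g-1}$, to write the parametrization $a$ down; and (ii) analyse the indeterminacy of $s$ and the locus where $f$ is not an isomorphism --- both of which lie over $\Xi_\sing$ --- and check that they have codimension $\ge2$ in $\mathfrak{X}$, so that resolving $f$ introduces no exceptional-divisor corrections to the $\beta_i$; any such correction would change precisely the small-$r$ components, where the formula is non-trivial. I expect (ii) to be the genuinely delicate point. One also has to explain why the argument is special to $\ud=(g)$ and $\ud=(1,g-1)$: this is the mechanism behind \cite[Th.~2]{podelski2023GaussEgt} --- for other $\ud$ the conormal variety acquires a further component dominating $\PP V$ (equivalently the span map degenerates), which would alter the $\beta_i$. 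Finally, a specialization argument from smooth bielliptic Pryms is of no use here, since the Gauss degree jumps in the limit and so $\Lambda_\Xi$ does not specialize flatly.
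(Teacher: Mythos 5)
Your reduction of the statement to the Künneth components $\beta_r$ of $[\PP\Lambda_\Xi]$ is exactly how the paper reads off $c_{M,r}$, and the overall shape (parametrize the conormal variety explicitly, compute its class by intersection theory on the parametrizing space) is also the paper's. But everything you label as ``the substance'' is deferred, and the two concrete choices you make for carrying it out diverge from what actually works. First, the model is not built from the étale cover $\tilde C\to C$. The paper's Section 1 replaces $(P,\Xi)$ by the degree-$2$ isogenous pair $(P'',\Xi'')$ sitting inside $\Pic^{\ud}(C'')$, where $C''\to E$ is the \emph{other} intermediate quotient of the bigonal tower; there $\Xi''=\Theta''\cap P''$ is a scheme-theoretic slice of an honest theta divisor, and $\Theta''$ is parametrized by $\tNd$, a blowup of a product of symmetric products of the normalization of $C''$ (degree $\ud$ with $\deg\ud=g$), where Macdonald/Poincaré bookkeeping applies. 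An Abel--Prym model from $\tilde C$ would instead put you inside a norm-map fibre of $\tilde C^{(2g-2)}$ for a nodal $\tilde C$ of genus $2g+1$, which is not a symmetric product and is substantially harder. Likewise no ``span map'' $s$ has to be constructed: the paper shows $\GG_\Xi=\mathscr{F}\circ\GG_\Theta$ for the linear projection $\mathscr{F}:|\omega_C|\dashrightarrow|\omega_N|$ from the point $R$, so $[\PP\Lambda_\Xi]=(\Id\times\mathscr{F})_\ast([\PP\Lambda_\Theta]\restr{P})$ and the whole computation happens at the level of $\Theta$.

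Second, your step (ii) contains an incorrect inference: knowing that the indeterminacy locus of $s$ (or the non-injectivity locus of $f$) has codimension $\geq 2$ does \emph{not} imply that resolving it contributes nothing to the $\beta_r$. After blowing up, $v=s^\ast h$ acquires exceptional components, and for $r\leq g-3$ the power $v^{\,g-1-r}$ involves their self-intersections, which push forward to nonzero classes supported over the centre. This is precisely where the paper's computation lives: $\tNd$ is a blowup along a codimension-$2$ centre $B$, the conormal model $\PP\Lambda_{\tNd}$ is a strict transform whose class is the total transform minus explicitly computed correction classes $[B']$ (and $[B'_{12}],[B'_{21}],[B'_{22}]$ in the $(1,g-1)$ case), and these corrections are what produce the shift from $\binom{2g-2r}{g-r}$-type terms to the final $\binom{2g-2r-2}{g-r-1}$. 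A proof along your lines would have to compute these corrections, not argue them away; as written, the plan would not close at exactly the small-$r$ components you identify as the nontrivial ones.
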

This coincides with the expected Chern-Mather classes for Jacobians. The Gauss map $\gamma_\Xi:\PP\Lambda_\Xi \to \PP V$ is not finite in general, but we have the following bound on the dimension of the locus above which finiteness fails:
\begin{maintheorem}[\ref{Thm: bielliptic Prym: fibers of Gauss map }]\label{Maintheorem: bielliptic Prym: fibers of Gauss map }
    Let $(P,\Xi)\in \SE_g\cup \SE_{1,g-1}$, then away from a subset $S\subset \PP V$ of codimension at least $3$, $\gamma_\Xi$ is finite.
\end{maintheorem}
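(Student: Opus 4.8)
The plan is to analyze the fibers of $\gamma_\Xi$ by relating them to the conormal geometry of the theta divisor of the singular bielliptic curve $C$ whose generalized Prym gives $(P,\Xi)$, using the presumably established description of $\Lambda_\Xi$ in terms of the Abel--Prym map and the conormal variety of the theta divisor upstairs. A point $\xi \in \PP V$ over which $\gamma_\Xi$ fails to be finite is one whose fiber $\gamma_\Xi^{-1}(\xi)$ has positive dimension; since $\PP\Lambda_\Xi$ has dimension $g-1$ and $\PP V$ has dimension $g-1$, finiteness is equivalent to $\gamma_\Xi$ being quasi-finite, and by upper semicontinuity of fiber dimension the non-finite locus $S$ is closed. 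First I would set up the stratification of $\PP\Lambda_\Xi$ by the image $T = \gamma_\Xi(\mathrm{Exc})$ of the exceptional locus where $\gamma_\Xi$ contracts: away from $\gamma_\Xi^{-1}(T)$ the map is an isomorphism onto its image by the standard fact (Ein, or the Prym--Gauss theory) that the Gauss map is generically finite of the computed degree and birational onto each component it doesn't contract. So the whole problem reduces to bounding $\dim T \le g-4$, equivalently $\codim_{\PP V} T \ge 3$.

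The key input is the explicit parametrization of $\Lambda_\Xi$ coming from the singular bielliptic curve. Over the two strata $\ud = (g)$ and $\ud = (1,g-1)$, the Prym theta divisor $\Xi$ has a description via the theta divisor of a nodal (or chain-of-elliptic-type) bielliptic curve $C$, and its conormal variety decomposes according to the combinatorics of the partial normalizations; this should be exactly the content of the earlier sections on "conormal geometry of theta divisors of singular bielliptic curves." Concretely, I expect $\PP\Lambda_\Xi$ to be covered by pieces each of which maps to $\PP V$ via a composition of (i) an Abel-type map from a symmetric product or a $\PP^1$-bundle over such, and (ii) a linear projection. The exceptional locus of $\gamma_\Xi$ is then governed by the loci where the relevant linear systems on $C$ (or on its partial normalizations) jump — i.e. by special divisors, Brill--Noether loci $W^r_d$, and by the bielliptic involution's fixed/ramification data. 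The main step is to show each such jumping locus contributes to $T$ a subvariety of codimension $\ge 3$: for the generic behavior one uses that $C$ is a general curve in the stratum, so its Brill--Noether loci have expected dimension, and the extra contributions (from the node, from the elliptic tails, from the involution) are cut out by enough independent conditions. I would handle the two cases $\ud=(g)$ and $\ud=(1,g-1)$ in parallel, since the difference is only in one elliptic component of length $1$ versus the whole cycle being length $g$, and track how that changes the dimension count by at most the expected amount.

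The main obstacle will be the contribution to the exceptional locus coming from the singular/boundary structure of $C$ — the node(s) and the rational components of the degenerate elliptic curve $E$ — rather than the "interior" Brill--Noether contribution, which is classical. Specifically, along the theta divisor one has extra components of $\Lambda_\Xi$ supported over the image of the Abel map restricted to divisors passing through the node, and one must show the Gauss images of these have the right codimension; this is where I expect to need a careful local analysis of the conormal cone at the singular points, presumably already carried out in the earlier part of the paper for the relevant singular bielliptic curves, so that here it is a matter of assembling the pieces and doing the dimension bookkeeping. A secondary subtlety is that $\gamma_\Xi$ may genuinely fail to be finite in codimension exactly $3$ (the bound is sharp in spirit, matching what happens for Jacobians where the Gauss map contracts the conormal directions along the singular locus of $\Theta$ in codimension $3$), so the argument must produce the bound $\ge 3$ rather than something weaker, which forces one to use that $(P,\Xi)$ lies in precisely these two strata and not a more degenerate $\SE_\ud$ — this is exactly where \cite[Th. 2]{podelski2023GaussEgt} and the computation of $c_{M,0}$ (Theorem~\ref{Maintheorem: Prym: Chern-Mather of Prym theta} with $r=0$, giving the "Jacobian" degree) get used, since a drop in that degree would force a larger non-finite locus.
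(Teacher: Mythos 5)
There is a genuine gap. Your proposal reduces the whole problem to bounding the Gauss images of loci ``where the relevant linear systems jump'' (Brill--Noether-type loci), asserting these have codimension $\ge 3$ because $C$ is general in its stratum. Two problems with this. First, the theorem carries no genericity hypothesis, and the paper never needs one: the dimension bounds on special linear series come from the bielliptic structure itself (the Martens-type statement \ref{Prop: Martens Thoerem for singular bielliptic curves}), valid for every curve in the stratum. Second, and more seriously, jumping of $h^0$ only accounts for one source of non-finiteness, namely positive-dimensional fibers of the Gauss map upstairs (the map $\gamma_{\tNd}$ on $\PP\Lambda^\ast_{\tNd}$); this is Step 1 of the paper's proof. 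The heart of the matter is that $\gamma_\Xi$ is obtained by composing with the linear projection $\mathscr{F}\colon |\omega_C|\dashrightarrow |\omega_N|$ from the point $R$, so the fiber of $\gamma_\Xi$ over $M\in|\omega_N|$ lives over the entire pencil $V_M=\langle \rho^{-1}(M),R\rangle\subset|\omega_C|$. It can therefore be positive-dimensional even when the theta-Gauss map is finite over every point of $V_M$: one must rule out curves $Z\subset \PP\Lambda^\ast_{\tNd}$, \emph{finite} over $V_M$, on which the norm map is constantly equal to $\delta$. This phenomenon is invisible to any bookkeeping of Brill--Noether loci, and your proposal offers no mechanism for it.

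The paper excludes exactly these components in Step 2 by a different kind of argument: pushing $Z$ forward to a curve $Y$ in $|\delta|$, using the multiplication map $|\delta|\times|\delta|\to|\Delta|$ to get $\deg[Y]=\deg(Z\to V_M)/2$ (Lemma \ref{Lem: degree of Y compared to deg Z to V}), showing that outside a codimension-$3$ set one has $\deg(Z\to V_M)\le 4$ (Lemma \ref{Lem: Case Eg0: locus S such that deg Z to V <=4}), hence $Y\simeq\PP^1$, and then deriving a contradiction from the tower of double covers induced by the involutions $\omega_\Lambda$ and $\tau_\Lambda$ together with Riemann--Hurwitz ($\omega\tau$ acts freely, so $Y\to\PP^1$ would be an \'etale double cover of $\PP^1$). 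Nothing in your sketch substitutes for this. Finally, your closing appeal to the degree of the Gauss map (Theorem \ref{Maintheorem: Prym: Chern-Mather of Prym theta} with $r=0$ and \cite[Th.~2]{podelski2023GaussEgt}) does not yield a codimension bound on the non-finite locus --- a large generic degree is compatible with a divisorial locus of positive-dimensional fibers --- and the paper's proof of this theorem does not use the degree computation at all.
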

 Recall that the characteristic cycle of the intersection complex $\mathrm{CC}(\IC_\Theta)$ is irreducible for a non-hyperelliptic Jacobian $(JC,\Theta)$ \cite{BresslerBrylinski97}. We have an analogous result for the loci $\SE_\ud$, apart from a correction term in the odd-dimensional case:
\begin{maintheorem}[\ref{Thm: Characteristic Cycle of Xi}]\label{Maintheorem: Characteristic Cycle of Xi}
Let $g\geq 4$ and $(P,\Xi)\in \SE_g\cup \SE_{1,g-1}$. If $g$ is even, then 
\[ \mathrm{CC}(\IC_\Xi)=\Lambda_\Xi\,. \]
If $g$ is odd, then
\[ \mathrm{CC}(\IC_\Xi)=\Lambda_\Xi+\sum_{x\in \Xiasing } 2 \Lambda_x\,, \]
where $\Xiasing$ is the set of isolated singularities of $\Xi$.
\end{maintheorem}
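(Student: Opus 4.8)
The plan is to compute $\mathrm{CC}(\IC_\Xi)$ by writing it as a non-negative integer combination $\mathrm{CC}(\IC_\Xi)=\Lambda_\Xi+\sum_W m_W\,\Lambda_W$, where $W$ runs over the irreducible components of the singular locus $\Xi_\sing$ and each $m_W\geq 0$, and then showing that $m_W=0$ unless $W$ is an isolated point, in which case $m_W=2$ precisely when $g$ is odd. The starting point is the general structure of $\IC_\Xi$ together with the microlocal characterization of the multiplicities: for a component $W\subset \Xi_\sing$ with generic point $\eta_W$, the multiplicity $m_W$ is computed from the stalk cohomology of $\IC_\Xi$ along a normal slice to $W$, i.e. from the vanishing cycles / the local structure of $\Xi$ transverse to $W$. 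So the first step is to use the earlier conormal-geometry results in the paper (the description of $\Lambda_\Xi$ and of $\Xi_\sing$ for $(P,\Xi)\in\SE_g\cup\SE_{1,g-1}$) to enumerate the components of $\Xi_\sing$ and, for each, identify the transverse singularity type.

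The key input should be the identification of the generic transverse singularity type along each stratum. For Jacobians the relevant fact (Bressler--Brylinski) is that the theta divisor has, generically along its singular locus, an ordinary double point in the transverse direction of the appropriate codimension, which contributes nothing extra to $\mathrm{CC}$; the new phenomenon for $\SE_\ud$ is the presence of \emph{isolated} singular points (the set $\Xiasing$) whose transverse — hence full — singularity is a cone over a smooth quadric, i.e. locally $\Xi$ looks near such a point like the affine cone $\{q(z_1,\dots,z_g)=0\}$ for a nondegenerate quadratic form $q$ in $g$ variables. Thus the second step is: compute $\mathrm{CC}$ of the $\IC$ of an affine quadric cone of dimension $g-1$. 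This is a standard but crucial local computation: $\IC$ of such a cone differs from the constant sheaf exactly by the primitive cohomology of the smooth quadric $\{q=0\}\subset\PP^{g-1}$ placed at the cone point, and the coefficient of $\Lambda_{\{0\}}$ in $\mathrm{CC}(\IC)$ equals (up to sign conventions) the reduced Euler characteristic contribution, which is $0$ when $g-1$ is odd (even-dimensional quadric, middle cohomology contributes $1$ but is already accounted for) and nonzero — giving the coefficient $2$ — when $g-1$ is even, i.e. $g$ odd. Concretely one uses the Milnor fiber of the node of multiplicity-two type in codimension $g$: its reduced cohomology is concentrated in degree $g-1$ and is $1$-dimensional, and the $\IC$-correction doubles this in the odd case via the local-to-global behaviour of the intermediate extension, whereas in the even case the parity makes the correction cancel. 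I would make this precise by the index formula $\mathrm{CC}(\IC_\Xi)=\Lambda_\Xi+\sum_{x}\big(\chi(\text{Milnor fiber of }\IC_\Xi\text{ at }x)\big)\Lambda_x$ over the isolated singular points, and then evaluate the Euler characteristic of the Milnor fiber of $\IC$ of a $(g-1)$-dimensional quadric cone, which is $1+(-1)^{g-1}$ times the rank of the primitive middle cohomology of the smooth quadric, equal to $2$ for $g$ odd and $0$ for $g$ even after the $\IC$-truncation.

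The remaining step is to rule out extra terms supported on the non-isolated components of $\Xi_\sing$ and on their closures' strata of intermediate dimension. Here I would argue exactly as in the Jacobian case: along each positive-dimensional component $W$ of $\Xi_\sing$, the generic transverse slice of $\Xi$ is, by the results recalled earlier in the paper on the conormal geometry, an ordinary quadratic singularity (a node in the transverse direction), for which $\IC$ equals the shifted constant sheaf — hence no contribution to $\mathrm{CC}$ beyond $\Lambda_\Xi$ — and the finiteness statement of Theorem~\ref{Maintheorem: bielliptic Prym: fibers of Gauss map } guarantees that the conormal directions over such strata are already contained in $\Lambda_\Xi$, so no new Lagrangian $\Lambda_W$ with $\dim W>0$ can appear with positive multiplicity. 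The hypothesis $g\geq 4$ is used to ensure $\dim\Xi=g-1\geq 3$ so that these transversality and codimension arguments have room to run and the isolated singularities are genuinely isolated.

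The main obstacle I anticipate is the precise bookkeeping in the local $\IC$-computation at the isolated singular points: one must be careful about the shift conventions for $\IC$, the difference between the Milnor fiber of the constant sheaf and of $\IC$, and the exact way the intermediate-extension truncation interacts with the parity of $\dim\Xi$ — it is exactly this parity that produces the dichotomy in the statement, so getting the truncation degree and the direction of the inequality right (rather than off by one) is where the real care is needed; everything else reduces to the transversality results already established.
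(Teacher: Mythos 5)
Your treatment of the isolated singular points is essentially the paper's: the points of $\Xiasing$ are isolated quadratic singularities of maximal rank (Corollary \ref{Cor: points in Xiasing a isolated singularities of maximal rank}), and the parity-dependent local contribution ($0$ for $g$ even, $2\Lambda_x$ for $g$ odd) is exactly the standard quadric-cone computation that the paper quotes as well known; your Milnor-fibre bookkeeping is loose, but the content is right. The genuine gap is in how you rule out extra Lagrangians over the positive-dimensional part of $\Xi_\sing$. First, $\mathrm{CC}(\IC_\Xi)$ is a priori an effective combination of conormals to closures of strata of a Whitney stratification, not merely to the irreducible components of $\Xi_\sing$; a transverse-slice argument at the \emph{generic} point of each component says nothing about deeper strata, and it is precisely there that the difficulty of such irreducibility statements lies. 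Second, your key local input --- that along each positive-dimensional component the generic transverse singularity is an ordinary node, ``for which $\IC$ equals the shifted constant sheaf'' --- is neither proved in the paper nor by you, and as stated it is incorrect: along the codimension-$3$ part of $\Xi_\sing$ the transverse slice would be a $3$-dimensional quadric cone, whose intersection complex is \emph{not} the constant sheaf (its stalk at the vertex has cohomology in two degrees); what is true is that its characteristic cycle is irreducible, but even that requires knowing the transverse quadric is nondegenerate, which you have not established. Third, invoking Theorem \ref{Thm: bielliptic Prym: fibers of Gauss map } here is a non sequitur: finiteness of $\gamma_\Xi$ on $\PP\Lambda_\Xi$ away from a codimension-$3$ set constrains the Gauss map of the cycle $\Lambda_\Xi$ itself, whereas the occurrence of a conormal $\Lambda_W$ with $\dim W>0$ in $\mathrm{CC}(\IC_\Xi)$ is a local question about the intersection complex and is not controlled by that statement.

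The paper's proof of Theorem \ref{Thm: Characteristic Cycle of Xi} avoids all of this by a Bressler--Brylinski-type global argument: away from $\Xiasing$ the Abel--Jacobi restriction $\phi:\tilde{W}^o\to\Xi^o$ is a \emph{small} resolution (using \ref{Cor: tilde W is smooth} and \ref{Prop: description of tNd}), so $\mathrm{CC}(\IC_{\Xi^o})$ is contained in the image of ${}^t\dd\phi^{-1}$ of the zero section; the codifferential estimate of Lemma \ref{Lem: Kernel of dd alpha in singular case}, applied over an explicit stratification of $W^o$, shows that this conormal correspondence is irreducible of dimension $g$, which kills every potential extra Lagrangian over $\Xi^o$ --- all strata at once --- before the isolated points are handled locally. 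To repair your proposal you would need a substitute for this global input (a proof of the transverse singularity type along \emph{every} stratum, including deep ones), and the ingredients you cite from the paper do not provide it.
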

Note that for a general $(P,\Xi)\in \SE_\ud$, the set $\Xiasing$ is empty. We will give in Section \ref{Sec: Pryms associated to biellitpic curves} a very explicit description of $\Xiasing$ in terms of the ramification points of the double cover to $E$. By \ref{Cor: points in Xiasing a isolated singularities of maximal rank}, the points in $\Xiasing$ are isolated quadratic singularities of maximal rank of $\Xi$, which explains why they appear in the characteristic cycle depending on the parity of $g$. \par 
All of these results follow from the study of a particular type of singular bielliptic curves, namely those that admit a double cover $\pi:C\to E$ to a cycle of $\PP^1$'s, such that the corresponding involution on $C$ fixes the singular points but does not exchange the branches at these points. An essential tool in the study of theta divisors of smooth curves is the Abel-Jacobi map. We construct an analogue of the Abel-Jacobi map in our particular setting (which works more generally in the setting of cyclic curves). Note that if $\ud=(g)$, then $C$ is irreducible and our construction coincides over the locus of line bundles with already existing generalizations of the Abel-Jacobi map using the Hilbert scheme as in \cite{AltmanKleiman1980PicScheme}. In the reducible case, we replace the Hilbert scheme with a blowup of the symmetric product. This construction has the advantage of being very explicit and allows computations in cohomology.  \par 
In Section \ref{Sec: The bielliptic Prym locus} we recall the construction and basic properties of the loci $\SE_\ud$. In Section \ref{Sec: Singular bielliptic curves} we study singular bielliptic curves, construct the Abel-Jacobi map and derive some of its properties. We then apply the results to the Prym varieties in Section \ref{Sec: Pryms associated to biellitpic curves} and we prove Theorems \ref{Maintheorem: Prym: Chern-Mather of Prym theta}, \ref{Maintheorem: bielliptic Prym: fibers of Gauss map } and \ref{Maintheorem: Characteristic Cycle of Xi}. \par 
We work over the field of complex numbers.

\section{The bielliptic Prym locus}\label{Sec: The bielliptic Prym locus}
In this section we recall general facts on bielliptic Prym varieties. The references are \cite{Debarre1988}, \cite{donagi1992fibers} and \cite{Naranjo1992BiellipticPryms}. The curves considered will always be complete connected nodal curves over $\CC$. By the genus of a curve $C$ we mean the arithmetic genus
\[p_a(C)\coloneqq 1- \chi(C,\BO_C) \,. \] 
Let $\pi:\tilde{C}\to C$ be a double cover of nodal curves, corresponding to an involution $\sigma:\tilde{C}\to \tilde{C}$. At a nodal point $x\in \tilde{C}$, $\pi$ is of one of the following three types (see \cite{Beauville1977}):
\begin{enumerate}[1)]
    \item The involution doesn't fix $x$.
    \item The involution fixes $x$ and exchanges both branches.
    \item The involution fixes $x$ but preserves each branch. 
\end{enumerate}
For $I\subset \{1,2,3\}$, we say that $\pi$ is of type $I$ if it is of type $(i)$ for some $i\in I$ at every singular point of $\tilde{C}$. We sat $\pi$ is of type $(\ast)$ if $\pi$ is of type (3) and moreover étale away from the singular locus. This corresponds to the $(\ast)$ condition in \cite{Beauville1977}. We define the ramification divisor of $\pi$ as the Cartier divisor on $\tilde{C}$ defined by the exact sequence
\[ 0 \to \pi^\ast \omega_C \to \omega_{\tilde{C}} \to \BO_R \to 0\,, \] 
where $\omega_C$, $\omega_{\tilde{C}}$ are the dualizing bundles.\par 

The \emph{biellitpic Prym} locus $\BE_g\subset \mathcal{A}_g$ is defined as the closure in $\mathcal{A}_g$ of the locus of Prym varieties $(P,\Xi)=\mathrm{Prym}(\tilde{C}/C)$ where $\pi:\tilde{C}\to C$ is of type $(\ast)$, and $C$ is a curve of genus $g+1$ admitting a double cover $p:C\to E$ to a genus $1$ curve. Suppose that the Galois group of the composition $p\circ \pi$ is $(\ZZ/2\ZZ)^2$. Then the two other intermediate quotients induce a tower of curves
\begin{equation}\label{Fig: Cartesian Diagram of curves Egt} 
\begin{tikzcd}
   & \tilde{C} \arrow[dl,"\pi"'] \arrow[d,"\pi'"] \arrow[dr,"\pi''"] & \\
    C \arrow[dr,"p"'] & C' \arrow[d,"p'"] & C'' \arrow[dl,"p''"] \\
    & E & 
\end{tikzcd}\,.
\end{equation}
We can assume $g(C')=t+1 \leq g(C'')=g-t+1$ for some $0\leq t\leq g/2$. Denote by $\EE_{g,t}'$ the set of Pryms obtained in this way with the additional assumption that $E$ is smooth, and by 
\[\EE_{g,t}\coloneqq \overline{\EE_{g,t}'}\subset \mathcal{A}_g \] 
its closure in $\mathcal{A}_g$. It is well-known  \cite{Debarre1988} \cite{donagi1992fibers} that for $g\geq 5$, the loci $\EE_{g,t}$ for $0\leq t\leq g/2$ are the $\lfloor g/2 \rfloor $ irreducible components of $\BE_g$. The set of bielliptic Pryms where the Galois group of $p\circ \pi$ is $\ZZ/4\ZZ$ is contained in $\EE_{g,0}$ by \cite{donagi1992fibers}. In \cite{podelski2023GaussEgt}, they define the following subloci of $\EE_{g,t}$:
Suppose that we have the above situation, but that $E$ is an $n$-cycle of $\PP^1$'s, i.e. the $n$ irreducible components of $E$ are rational and the dual graph of $E$ is the cyclic $n$-graph. Let $\Delta\in E_{\sm,2g}$ be the branch locus of $p$ and $\ud\coloneqq \underline{\deg}(\Delta)/2$. Assume moreover that $\mathrm{Ram}(p')=0$. Then $\SE_\ud$ is defined as the set of Prym varieties $\Prym(\tilde{C}/C)$ obtained in this way. \par 
It turns out \cite[Lem. 4.21]{podelski2023GaussEgt}, that the two types of loci defined above cover all of $\BE_g$ apart from the intersection with the Jacobian locus $\mathcal{J}_g$ and the locus of decomposable ppav's $\mathcal{A}_g^\dec$
\[ \BE_g=\BE_g\cap(\mathcal{J}_g\cup \mathcal{A}_g^\dec) \cup \bigcup_{t=0}^{\lfloor g/2 \rfloor } \EE'_{g,t} \cup \bigcup_{\deg \ud = g} \SE_\ud \,. \]
The goal of the present paper is to carry out an in-depth study of the geometry of the Prym theta divisor for Prym varieties in $\SE_\ud$.\par 
Fix $\ud$ with $\deg \ud=g$ and $(P,\Xi)=\Prym(\tilde{C}/C)\in \SE_\ud$. We keep the notations of \ref{Fig: Cartesian Diagram of curves Egt}.  Because $\pi$ is of type $(3)$, $p$ is of type $(1,2)$ and $p''$ is of type $(3)$. Thus $C''$ has exactly one component sitting above each component of $E$. We can thus identify the multidegrees on both curves. As $p'$ is unramified we have $\mathrm{Branch}(p'')=\mathrm{Branch}(p)=\Delta$. Moreover $p$ is flat and we can thus associate to it the line bundle $\delta\coloneqq \det(p_\ast \BO_C)^{-1}\in \Pic^\ud(E)$ verifying $\Delta\in |\delta^{\otimes 2}|$ (see \cite[Sec. 4]{podelski2023GaussEgt}). Let
\begin{equation}\label{Equ: Definition of P'' Xi''}
\begin{aligned} 
P'' &\coloneqq \{L\in \Pic^\ud(C'') \,|\, \Nm_{p''}(L)=\delta\} \,,  \quad \text{and} \\
\Xi''&\coloneqq \Theta''\cap P'' \,, 
\end{aligned}
\end{equation}
where $\Theta''\coloneqq \{L\in \Pic^{\ud}(C'')\,|\, \hh^0(L)>0 \} \subset\Pic^\ud(C'')$ is the theta divisor as defined in \cite{Beauville1977}. Note that $(P'',\Xi'')$ is not principally polarized but of type $(1,\dots,1,2)$. By \cite[Prop. 4.12, Prop. 4.14]{podelski2023GaussEgt}, the pullback induces a degree $2$ isogeny
\[ \pi''^\ast: P''\to P \,, \quad \text{with } \quad (\pi''^\ast)^\ast \Xi=\Xi'' \,. \]
This isogeny is the quotient by the two-torsion point $p''^\ast \delta (-R'')\in JC''$, where $R''$ is the ramification divisor of $p''$. Thus the geometry of $(P,\Xi)$ can be completely understood through the study of $(P'',\Xi'')$. Since $p'':C''\to E$ is a morphism to a (singular) elliptic curve, it is substantially easier to study than $\pi$. Another big advantage is that $\Xi''$ is defined as a scheme-theoretic intersection, in contrast to $\Xi$ who is defined as an intersection up to multiplicity only. Thus from now on, we will forget about the double covering $\pi:\tilde{C}\to C$, and study the following situation:
\begin{itemize}
    \item A nodal curve $C''$ with a double covering $p'':C''\to E$ of type (3), where $E$ is a cycle of $\PP^1$'s.
    \item A fixed $\delta\in \Pic^\ud(E)$ with $\Delta\in |\delta^{\otimes 2}|$ where $\Delta$ is the branch divisor of $p''$.
\end{itemize}
We define $(P'',\Xi'')$ by \ref{Equ: Definition of P'' Xi''}, and obtain a principally polarized abelian variety after quotienting by $\langle p''^\ast\delta(-R'')\rangle $. Thus Theorem \ref{Maintheorem: Prym: Chern-Mather of Prym theta}, \ref{Maintheorem: bielliptic Prym: fibers of Gauss map } and \ref{Maintheorem: Characteristic Cycle of Xi} will follow immediately from Theorem \ref{Thm: Prym: Chern-Mather of Prym theta}, \ref{Thm: bielliptic Prym: fibers of Gauss map } and \ref{Thm: Characteristic Cycle of Xi} respectively.

\section{Singular bielliptic curves}\label{Sec: Singular bielliptic curves}
\subsection{Preliminaries}
We start by setting some notations. From now on, $E$ will be a cycle of $\PP^1$'s, i.e. the normalization of $E$ has $n$ components $E_1,\dots,E_n$ each isomorphic to $\PP^1$ and the dual graph of $E$ is a cyclic graph. We assume that for $i\in \ZZ/n\ZZ$, $E_i$ intersects $E_{i-1}$ at $Q_i^0$ and $E_{i+1}$ at $Q_i^\infty$. Let $Q_i\in E$ be the image of $Q_i^0$ (i.e. the intersection of $E_i$ and $E_{i-1}$. We fix an identification of $E_i$ with $\PP^1$ where $Q_i^0$ is identified with $0$, $Q_i^\infty$ is identified with $\infty$. This identification also gives coordinates near $Q_i^0$ and $Q_i^\infty$ coming from $\PP^1$. With these coordinates we can identify the group of Cartier divisors supported at $Q_i$ with $\CC^\ast \times \ZZ \times \ZZ$ (see \cite{Beauville1977}). \par 
Let $C$ be a stable nodal curve of genus $g+1$ with a double covering $\pi:C\to E$ of type (3). It follows that the associated involution $\tau:C\to C$ preserves each irreducible component, fixes the singular points and is not the identity on any component. This implies that $C$ has $n$ component, one above each component of $E$ and the dual graph is cyclic. Let $\beta: N_1\cup\cdots \cup N_n \to C$ be the normalization, and $\pi_i:N_i\to E_i\simeq \PP^1$ be the induced morphism. 
\[ 
\begin{tikzcd}
    C \arrow[d,"\pi"'] & N \arrow[l,"\beta"'] \arrow[dl,"\pi_N"'] & N_i \arrow[l,hook'] \arrow[d,"\pi_i"] \\
    E & E_i \arrow[l,hook'] \arrow[r, phantom, "\simeq "] & \PP_1
\end{tikzcd}
\]
Let $P_i^0$ (resp. $P_i^\infty$) be the point in $N_i$ sitting above $Q_i^0$ (resp. $Q_i^\infty$). By assumption the morphism $\pi_i$ is ramified at $R_i+P_i^0+P_i^\infty$ for some $R_i\subset N_{i}$. Let
\begin{align*}  
R&\coloneqq R_1+\cdots+R_n \subset C\,, \\
\Delta &\coloneqq \pi_\ast(R) \subset E\,,\\ 
\ud&\coloneqq (d_1,\dots,d_n)\,, \quad \text{with $d_i\coloneqq \deg R_i /2$\,. } 
\end{align*}
Both $R$ and $\Delta$ are reduced an non-singular divisors, and are the ramification and branch locus of $\pi$, respectively. Let $\delta_i$ be the hyperelliptic bundle on $N_i$.
There is an exact sequence
\begin{equation}\label{Equ: exact sequence of biellitpic jacobian version 1} 0\to \CC^\ast \to \Pic^{\underline{d}}(C) \overset{\beta^\ast}{\longrightarrow} \Pic^{\underline{d}}(N) \to 0 \,. 
\end{equation}
\begin{proposition}
C is hyperelliptic if and only if $d=(1)$ or $d=(1,1)$.
\end{proposition}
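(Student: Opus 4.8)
The plan is to use the standard characterization of hyperellipticity for stable curves: a connected stable curve of arithmetic genus $\geq 2$ is hyperelliptic if and only if it admits an involution $\iota$ with $p_a(C/\iota)=0$ (equivalently, $\omega_C$ is not very ample). Note that $p_a(C)=g+1\geq 2$, since every $d_i\geq 1$ by stability (a genus-$0$ component would be a rational bridge), so $g=\sum d_i\geq n\geq 1$. The leverage is that $p_a(C/\iota)=0$ forces every component of $C/\iota$ to be rational and its dual graph to be a tree.

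For the ``if'' direction: when $\ud=(1)$ one has $p_a(C)=2$, and $C$ is hyperelliptic automatically; concretely $C$ is the elliptic curve $N_1$ with $P_1^0$ and $P_1^\infty$ glued, and the deck involution of the degree-$2$ map $N_1\to\PP^1$ attached to $|\mathcal{O}_{N_1}(P_1^0+P_1^\infty)|$ interchanges $P_1^0$ and $P_1^\infty$, hence descends to an involution of $C$ with quotient $\PP^1$. When $\ud=(1,1)$, I would write $C=N_1\cup N_2$ with $N_1,N_2$ elliptic, glued at the nodes $\nu_1=(P_1^0\sim P_2^\infty)$ and $\nu_2=(P_2^0\sim P_1^\infty)$, take on each $N_i$ the deck involution $\iota_i$ of the degree-$2$ map defined by $|\mathcal{O}_{N_i}(P_i^0+P_i^\infty)|$ (which interchanges $P_i^0$ and $P_i^\infty$), and check that $\iota_1$ and $\iota_2$ glue to an involution $\iota$ of $C$ interchanging the two nodes; then $C/\iota$ is the union of the two rational curves $N_1/\iota_1$ and $N_2/\iota_2$ meeting at one point, of arithmetic genus $0$.

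For the ``only if'' direction, suppose $C$ is hyperelliptic and fix $\iota$ with $p_a(C/\iota)=0$. First I would observe that $\iota$ cannot interchange two distinct components $N_a\neq N_b$: otherwise $C/\iota$ would have a component whose normalization is $N_a$, of genus $d_a\geq 1$, contradicting $p_a(C/\iota)=0$. Hence $\iota$ preserves every $N_i$, and each $\iota|_{N_i}$ is a nontrivial involution with rational quotient; when $d_i\geq 2$, $N_i$ is smooth of genus $\geq 2$ and carries the $g^1_2$ given by $\pi_i$, so it has a unique hyperelliptic involution, namely the deck transformation $\tau_i$ of $\pi_i$, whence $\iota|_{N_i}=\tau_i$ fixes the ramification points $P_i^0,P_i^\infty$. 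Since $\iota$ permutes the nodes of $C$ while fixing $N_i$, each $\iota|_{N_i}$ preserves $\{P_i^0,P_i^\infty\}$, so it either fixes both points or interchanges them. If some $\iota|_{N_i}$ interchanges $P_i^0$ and $P_i^\infty$, then chasing the action of $\iota$ on the node $\nu_i=(P_i^0\sim P_{i-1}^\infty)$ gives $\iota(\nu_i)=\nu_{i+1}$ and $\iota(P_{i-1}^\infty)=P_{i+1}^0$, so $\iota$ interchanges $N_{i-1}$ and $N_{i+1}$; by the first observation this is possible only if $N_{i-1}=N_{i+1}$, i.e. $n=2$, and then the analysis above forces $d_1=d_2=1$, so $\ud=(1,1)$. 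If instead every $\iota|_{N_i}$ fixes both $P_i^0$ and $P_i^\infty$, then $\iota$ fixes each node of $C$ together with its two branches, so $C/\iota$ has the same cyclic dual graph as $C$ and $b_1(C/\iota)=1$, contradicting $p_a(C/\iota)=0$. This settles $n\geq 2$; the case $n=1$ is handled identically after lifting $\iota$ to the normalization $N_1$: if the lift fixes $P_1^0,P_1^\infty$ the quotient retains the node and has positive arithmetic genus, while if it interchanges them then $d_1\geq 2$ is excluded as above, leaving $\ud=(1)$.

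The hard part will be the combinatorial bookkeeping in the ``only if'' direction: tracking how the action of $\iota$ on the two marked points of a single component propagates to constrain its action on the neighbouring components and on the nodes around the cycle, and disposing of the small cases $n=1,2$ by hand. A secondary technical point is to make the characterization of hyperellipticity precise for singular stable curves, and to verify the local descriptions of the quotient of a node by an involution that fixes, respectively interchanges, its two branches.
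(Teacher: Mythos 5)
Your proposal is correct and takes essentially the same route as the paper's proof: both reduce to an involution with rational (arithmetic genus $0$) quotient, note that stability forces $d_i\geq 1$ so the involution must preserve each component, rule out the case where it fixes every node together with both branches (the cyclic dual graph would survive to the quotient), and use uniqueness of the hyperelliptic involution on components of genus $\geq 2$ to force $d_i=1$ and $n\leq 2$. You additionally carry out the ``if'' direction explicitly and do the node-chasing in more detail than the paper, which states only the two terminal cases.
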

\begin{proof}
We use the usual notion of hyperellipticity for singular nodal curves (see \cite[p. 101]{arbarello2}). The stability assumption on $C$ implies $d_i>0$ for all $i$. Suppose $C$ is hyperellitpic and let $\sigma:C\to C$ be the hyperelliptic involution. If $\sigma$ exchanges components, these must we isomorphic to $\PP^1$, contradicting the stability of $\CC$. Thus $\sigma$ preserves the components. $\sigma$ can't be of type (3) at any node because then the node would have to be separating. Thus the only possibilities left are
\begin{itemize}
    \item $C$ has one component, $\sigma$ exchanges both branches at the node, thus $N/\sigma=\PP^1$ but $\sigma\neq \tau $ (since $\tau\restr{N}$ preserves $P_1^0$ and $P_1^\infty$ and $\sigma$ exchanges them). Since the hyperellitpic involution is unique when the $p_a(N)>1$, this implies $d=(1)$.
    \item $C$ has two components, and the two nodes are exchanged by $\sigma$. Again $\sigma\restr{N}$ exchanges $P_1^0$ and $P_1^\infty$ (resp. $P_2^0$ and $P_2^\infty$). Thus $\sigma\neq \tau$, thus $p_a(N_1)=p_a(N_2)=1$ and $d=(1,1)$.
\end{itemize} 
\end{proof}
Recall the Brill-Noether varieties defined by
\[ W^r_\ud (C) \coloneqq \{L\in |\Pic^\ud (C) \,|\, \hh^0(C,L)\geq r+1 \}\subset \Pic^\ud(C) \,. \]
We have the following ``Martens Theorem" type result
\begin{proposition}\label{Prop: Martens Thoerem for singular bielliptic curves}
Suppose $0\leq \ud \leq \underline{\deg}(\omega_C)/2$, and $0<2r\leq d$. Then
\[ \dim W^r_\ud(C) = d-2r-1 \,. \]
\end{proposition}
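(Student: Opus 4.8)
The plan is to establish the two inequalities $\dim W^r_\ud(C)\ge d-2r-1$ and $\dim W^r_\ud(C)\le d-2r-1$ separately, using throughout the normalization $\beta\colon N=N_1\sqcup\cdots\sqcup N_n\to C$ together with the exact sequence \eqref{Equ: exact sequence of biellitpic jacobian version 1}, which exhibits $\Pic^\ud(C)$ as a $\CC^\ast$--bundle over $\prod_i\Pic^{d_i}(N_i)$. The key point is that each $N_i$ is hyperelliptic of genus $d_i$, with hyperelliptic bundle $\delta_i$ and hyperelliptic involution the restriction of the bielliptic one; moreover $P_i^0$ and $P_i^\infty$ are two \emph{distinct} ramification points of $\pi_i\colon N_i\to E_i$, hence Weierstrass points, each fixed by the hyperelliptic involution and so not conjugate to one another. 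Consequently Brill--Noether theory on $N$ is completely explicit: $W^s_e(N_i)=s\,\delta_i+W_{e-2s}(N_i)$ of pure dimension $e-2s$ for $0\le 2s\le e$, and similarly for the relevant loci inside $\prod_i\Pic^{d_i}(N_i)$.

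For the lower bound I would write down an explicit family, mimicking the bielliptic construction on smooth curves. Since $p_a(E)=1$ and $\omega_E$ has trivial multidegree, any $M\in\Pic(E)$ with non--negative local degrees and $\deg M=m\ge 1$ has $\HH^1(E,M)=0$, hence $\hh^0(E,M)=m$; as pullback of sections is injective, $\hh^0(C,\pi^\ast M)\ge m$. Pick a multidegree $\underline e\ge 0$ with $\deg\underline e=r+1$ and $2e_i\le d_i$ for all $i$ — possible exactly when $\sum_i\lfloor d_i/2\rfloor\ge r+1$ — let $M$ vary in $\Pic^{\underline e}(E)$ and $D$ over effective divisors on $C$ of multidegree $\ud-2\underline e$, and set $L\coloneqq\pi^\ast M\otimes\BO_C(D)\in\Pic^\ud(C)$, so $\hh^0(C,L)\ge r+1$. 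The one--dimensional $\Pic^{\underline e}(E)$ and the $(d-2r-2)$--dimensional space of such $D$ give a family in $W^r_\ud(C)$; since $\pi^\ast M\otimes\BO_C(D)\cong\pi^\ast M'\otimes\BO_C(D')$ forces $\BO_C(D'-D)$ to be a pullback from $E$, which for general data forces $(M,D)=(M',D')$ by a standard argument on the difference map, this family has dimension $1+(d-2r-2)=d-2r-1$. When $\sum_i\lfloor d_i/2\rfloor<r+1$ one necessarily has $2r$ close to $d$, so $d-2r-1\in\{-1,0\}$: the value $-1$ is just emptiness (covered by the upper bound), while for $d-2r-1=0$ the finitely many required bundles are extracted from the finer analysis below, taking the components of $\beta^\ast L$ of the special shape $s_i\,\delta_i+P_i^0$ with an appropriate gluing.

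For the upper bound I would restrict sections along $\beta$: if $L\in W^r_\ud(C)$ then $\sum_i\hh^0(N_i,\beta^\ast L|_{N_i})=\hh^0(N,\beta^\ast L)\ge\hh^0(C,L)\ge r+1$, so $\beta^\ast L$ lies in an explicit union of products of hyperelliptic Brill--Noether loci. It then remains to control the $\CC^\ast$--fibres of $\beta^\ast$ over $W^r_\ud(C)$. Given $L_N=(L_i)$ in that union, descending a section of $\bigoplus_i L_i$ to $C$ is $n$ linear matching conditions at the nodes, one per node, so the generic gluing yields $\hh^0(C,\cdot)=\bigl(\sum_i\hh^0(N_i,L_i)\bigr)-n+\varepsilon$ with $\varepsilon\ge 0$ governed by the ranks of the evaluation maps $\HH^0(N_i,L_i)\to (L_i)_{P_i^0}\oplus(L_i)_{P_i^\infty}$. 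Hence $\hh^0(C,L)\ge r+1$ for some gluing forces either $\beta^\ast L$ into a Brill--Noether locus of dimension at most $d-2r-2$ (the $\CC^\ast$--fibre then contributing at most $1$), or, when $\hh^0(C,L)=\hh^0(N,\beta^\ast L)=r+1$, a maximal degeneracy of these evaluations together with a rigid gluing; in the latter case the locus of admissible $\beta^\ast L$ is a \emph{proper} subvariety of the $(d-2r)$--dimensional locus $\{\,\beta^\ast L:\hh^0(N,\beta^\ast L)=r+1\,\}$. Properness is precisely where $P_i^0,P_i^\infty$ being distinct and non--conjugate Weierstrass points is used: on a hyperelliptic curve, failure of the evaluations at two such fixed points to be independent is a nontrivial condition on the line bundle. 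Adding the two cases gives $\dim W^r_\ud(C)\le d-2r-1$, and the same analysis in the case $\hh^0(N,\beta^\ast L)=r+1$ produces the nonempty finite set needed when $2r=d-1$.

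The main obstacle is the upper bound — more precisely, the combinatorial bookkeeping around the $n$--cycle: one must track how the $n$ node--matching conditions interact with the distribution of $\ud$ over the components and with the hyperelliptic structures on the $N_i$, and verify that, stratum by stratum, the dimension of the admissible locus in $\Pic^\ud(N)$ plus the dimension ($0$ or $1$) of the $\CC^\ast$--fibre one keeps never exceeds $d-2r-1$. The hypothesis $0\le\ud\le\underline{\deg}(\omega_C)/2$ is what keeps us in the range where the loci $W^s_{d_i}(N_i)$ have their expected dimension and no pathology caused by an unbalanced multidegree arises. The lower bound is routine once the generic finiteness of $(M,D)\mapsto\pi^\ast M\otimes\BO_C(D)$ has been checked. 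One could alternatively run a direct Martens--type argument on the Gorenstein curve $C$ via the multiplication map $\HH^0(L)\otimes\HH^0(\omega_C\otimes L^{-1})\to\HH^0(\omega_C)$ and the induced tangent space bound, but the base--point--free pencil trick then needs care at the nodes, so the normalization route appears cleaner.
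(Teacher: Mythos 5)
Your overall route --- pass to the normalization, compare $\hh^0(C,L)$ with $\hh^0(N,\beta^\ast L)$ via the $n$ matching conditions at the nodes, and track codimensions in $\Pic^\ud(N)$ --- is the same as the paper's, but both halves have concrete gaps, and the step you yourself flag as ``the main obstacle'' is exactly the content of the paper's proof rather than a deferrable detail. For the lower bound, the family $\pi^\ast M\otimes\BO_C(D)$ exists only when there is $\underline e\ge 0$ with $\deg\underline e=r+1$ and $2\underline e\le\ud$, and your fallback claim that otherwise $d-2r-1\in\{-1,0\}$ is false. Take $\ud=(1,g-1)$ with $g$ even and $r=g/2-1$: then $\sum_i\lfloor d_i/2\rfloor=g/2-1<r+1$, yet $d-2r-1=1$. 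The required one-dimensional family is the full $\CC^\ast$-fibre of $\beta^\ast$ over the single point $\BO_{N_1}(P_1^0)\boxtimes\bigl((g/2-1)\delta_2+\BO_{N_2}(P_2^\infty)\bigr)$ (compare the sets $\mathcal{B}_k$ in \ref{Prop: Singular Locus of theta for bielliptic Curves}); one checks directly that every gluing of this bundle has $\hh^0(C,L)=g/2$. These bundles are built from the points over the nodes and are not of the form $\pi^\ast M\otimes\BO_C(D)$, and for $\ud=(1,\dots,1)$ the discrepancy grows with $n$.

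For the upper bound, your dichotomy is calibrated to the irreducible case. In the central case $\ud=\underline{\deg}(\omega_C)/2$ one has $d_i=g(N_i)$, so $\hh^0(N_i,L_i)\ge 1$ for every $L_i$ and hence $\hh^0(N,\beta^\ast L)\ge n$ for every $L$; the locus $\{\hh^0(N,\beta^\ast L)=r+1\}$ is then empty if $n>r+1$, dense in $\Pic^\ud(N)$ if $n=r+1$, and of dimension $d-2(r+1-n)$ otherwise --- not $d-2r$. Consequently ``a proper subvariety of that locus'' does not yield $\le d-2r-1$. The estimate must instead show that each unit of $\hh^0(C,L)$ beyond the generic value costs codimension two in $\Pic^\ud(C)$, distinguishing sections supported on a single component (governed by the pencils $\delta_i$) from sections that propagate around the cycle through consecutive nodes (governed by pairs $(P_i^0,P_j^\infty)$ dividing $\beta^\ast L$). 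This bookkeeping is precisely what the paper carries out by exhibiting an explicit basis of $\Ker(\psi)$ indexed by marked segments of the dual graph; without it neither inequality is established, so the proposal identifies the right framework but does not close the argument.
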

\begin{proof}
Recall there is an exact sequence
\[ 0 \to \BO_C \to \beta_\ast \BO_N \to \bigoplus_{i=1}^n \CC_{Q_i} \to 0 \,, \]
where $Q_i$ are the singular points of $C$. From this we derive the exact sequence
\[ 0 \to \HH^0(C,L)\to \HH^0(N,\beta^\ast L) \overset{\psi}{\longrightarrow} \CC^n \,. \]
$\psi$ depends on the gluing of the line bundle above the nodes, and on whether or not the $P_i^0$ and $P_i^\infty$ are base points for $|L\restr{N_i}|$. We will now give an explicit basis of this $\Ker (\psi)$. Let $\Gamma$ be the dual graph of $C$. For each $i$, if $\hh^0(N_i,L\restr{N_i})=0$, delete the vertex $(N_i)$ and the edges to it from $\Gamma$. If $\hh^0(N_i,L\restr{N_i})>0$ we can write in a unique way
\[ L\restr{N_i}=k_i \delta_i + \BO_{N_i}(a_i^0 P_i^0+a_i^\infty P_i^\infty + D) \,, \]
where $a_i^0,a_i^\infty \in \{0,1\}$, and $D$ is $\tau$-simple (recall that $2P_i^0\sim 2P_i^\infty \sim \delta_i$). Note also that if $g(N_i)=1$, then $d_i\leq 1$ by assumption. We have $\hh^0(N_i,L\restr{N_i})=k_i+1$.
\par 
If $a_i^0=a_i^\infty=1$, then $\HH^0(N_i,L\restr{N_i})\subset \Ker (\psi)$. Delete the vertex corresponding to $N_i$ and the edges to this vertex from the graph $\Gamma$. Else, if $k_i=0$, and $a_i^0=1$ (resp. $a_i^\infty=1$), mark the vertex $(i)$ with $\{0\}$ (resp. $\{\infty\}$). In this case, the space of sections is generated by a section vanishing at $P_i^0$ (resp. $P_i^\infty$) and not at $P_i^\infty$ (resp. $P_i^0$) (if $a_i^=a_i^\infty=0$ mark the vertex with $ \{0,\infty\}$). If $k_i=0$ and $a_i^0=a_i^\infty=0$ the space of sections is generated by a section vanishing neither at $P_i^0$ nor at $P_i^\infty$. \par 
If $k_i>0$, and $a_i=0=a_i^\infty =0$, then mark the vertex $(i)$ with $\{0,\infty \}$. In this case, $\HH^0(N_i,L\restr{N_i})$ is generated by $k_i-1$ sections vanishing at $P_i^0$ and $P_i^\infty$, a section vanishing at $P_i^0$ but not $P_i^\infty$ and a section vanishing at $P_i^\infty$ but not $P_i^0$. \par 
If $k_i>0$ and $a_i^0=1$ but $a_i^\infty = 0$ (resp. the opposite), mark the vertex $(i)$ with $\{0\}$ (resp. $\{\infty\}$). \par 
The space of sections of $L$ supported on single component is of dimension
\[ \bigoplus_{i=1}^n \hh^0(N_i,L\restr{N_i}(-P_i^0-P_i^\infty))=\sum_{i=1}^n \max(0, k_i+a_i^0+a_i^\infty -1) \,. \]
The other sections are generated in the folowing way: start with a vertex of $\Gamma$ marked with $0$. that corresponds to a section $s_i^0$ vanishing at $P_i^0$ but non-zero at $P_i^\infty$. This imposes a non-zero value at $P_{i+1}^0$. If $(i+1)$ is marked with $\infty$ we have a new section. If $(i+1)$ isn't marked at all that imposes a coefficient on the section of $\HH^0(N_{i+1},L\restr{N_{i+1}})$ not vanishing at $P_{i+1}^0$ and $P_{i+1}^\infty$ and we move on to $(i+2)$. If $(i+1)$ is marked with $0$ but not $\infty$ it is impossible to complete $s_i^0$ to a section. We repeat this process on the whole graph.\par   
Thus, for each segment $(i,i+1,\dots,j)$ of the modified graph $\Gamma$ such that $i$ is marked with $0$, $j$ is marked with $\infty$, and $i+1,\dots,j-1$ are not marked, there is an additional section. In particular, since increasing $k_i$ by one or having a pair $(P_i^0,P_j^\infty)$ imposes a condition of codimension two on $W^0_d(N)$, we see that 
\[ \dim W^r_d(C) \leq d-2r-1 \]
Moreover it is clear that choosing $a_i^0$ and $a_i^\infty$ properly we achieve this bound.
\end{proof}
\begin{remark}
Our proof also shows that for $r\geq 1$, the varieties $W^r_\ud(C)$ are the preimages by $\beta^\ast$ of certain varieties in $\Pic^\ud(N)$.
\end{remark}

As in \cite[Sec. 2]{Beauville1977}, we define the $\Theta$-divisor in $\Pic^{\underline{d}}(C)$ by
\[ \Theta \coloneqq \{ L \in \Pic^{\underline{d}}(C)\,|\, \hh^0(C,L)>0 \} \subset \Pic^{\underline{d}}(C)\,. \]
Recall the Riemann Singularity Theorem, who is due in this form to Kempf in the irreducible context and Beauville \cite[Prop. 3.11]{Beauville1977} in the reducible context:
\begin{proposition}\label{Prop: Riemann Singularity Theorem fro reducible curves}
    Let $C$ be a connected nodal curve, and assume $\underline{\deg}(\omega_C)=2\ud$ is even. Let $L\in \Theta=\{M\in \Pic^{\underline{d}}(C)\,|\, \hh^0(M)>0 \}$ and consider the pairing
    \begin{align*}
       \phi: \HH^0(C,L) \otimes \HH^0(C,\omega_C-L) &\to \HH^0(C,\omega_C)\,.
    \end{align*}
    Let $(s_i)$ and $(t_j)$ be a basis of $\HH^0(C,L)$ and $\HH^0(C,\omega_C-L)$ respectively. Then
    \[ \mathrm{mult}_L \Theta \geq \HH^0(C,L) \,, \]
    with equality if and only if $\det(\phi(s_i\otimes t_j))$ is non-zero, in which case it gives the tangent cone of $\Theta$ at $L$.
\end{proposition}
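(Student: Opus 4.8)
The plan is to realise $\Theta$ locally near $L$ as a determinantal hypersurface and to read off both its multiplicity and its tangent cone from the linear term of the defining determinant, exactly as in the classical Riemann--Kempf argument for smooth curves.

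First, the hypothesis $\underline{\deg}(\omega_C)=2\ud$ combined with $\deg\omega_C=2p_a(C)-2$ gives $\deg\ud=p_a(C)-1$, hence $\chi(C,L)=0$ for every $L\in\Pic^{\ud}(C)$. Since $\Pic^{\ud}(C)$ is smooth and carries, at least \'etale-locally around $L$ (which suffices), a Poincar\'e sheaf $\mathcal P$, the theorem on cohomology and base change lets us represent $R\pi_{*}\mathcal P$ on a neighbourhood $U$ of $L$ by a two-term complex of vector bundles $\bigl[\,E^{0}\xrightarrow{\;\gamma\;}E^{1}\,\bigr]$ with $\rank E^{0}=\rank E^{1}$ (because $\chi=0$), such that $\hh^{0}(M)=\dim\Ker\gamma_{M}=\dim\mathrm{coker}\,\gamma_{M}$ for $M\in U$. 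Thus $\Theta\cap U=\{\det\gamma=0\}$ as schemes; in particular $\Ker\gamma_{L}\cong\HH^{0}(C,L)$ and $\mathrm{coker}\,\gamma_{L}\cong\HH^{1}(C,L)$, the latter being Serre-dual to $\HH^{0}(C,\omega_{C}\otimes L^{-1})$ (Serre duality with the dualizing sheaf is classical for nodal curves).

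Next I would invoke the following elementary fact about determinantal schemes. If $M(z)$ is an $r\times r$ matrix of holomorphic functions near $0\in\CC^{N}$ with $\rank M(0)=r-k$, choose bases so that $M(0)=\left(\begin{smallmatrix}0&0\\0&I_{r-k}\end{smallmatrix}\right)$; using that the bottom-right block stays invertible one gets $\det M(z)=\det A(z)+O(|z|^{k+1})$, where $A(z)$ is the top-left $k\times k$ block, so that, writing $A(z)=A_{1}(z)+O(|z|^{2})$ with $A_{1}$ linear, $\mathrm{mult}_{0}\{\det M=0\}\ge k$, with equality if and only if $\det A_{1}\not\equiv 0$, in which case $\det A_{1}$ generates the ideal of the tangent cone. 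Moreover $A_{1}$ is intrinsic: it is the composite $T_{0}\CC^{N}\xrightarrow{\,dM\,}\Hom(E^{0},E^{1})\twoheadrightarrow\Hom(\Ker M(0),\mathrm{coker}\,M(0))$. Applying this with $r=\rank E^{0}$, $k=\hh^{0}(C,L)$ and $N=\dim\Pic^{\ud}(C)=p_a(C)$ gives $\mathrm{mult}_{L}\Theta\ge\hh^{0}(C,L)$ together with the stated equality criterion and tangent cone, provided we identify the intrinsic linear term $A_{1}$ with $\phi$. By the standard deformation-theoretic computation — the differential of the "cohomology of the universal sheaf" complex is cup product with the Kodaira--Spencer class — $A_{1}$ is the map
\[ T_{L}\Pic^{\ud}(C)=\HH^{1}(C,\BO_{C})\longrightarrow\Hom\bigl(\HH^{0}(C,L),\HH^{1}(C,L)\bigr),\qquad \xi\longmapsto\bigl(s\mapsto\xi\cup s\bigr). \]
Dualizing the target via $\HH^{1}(C,L)^{\vee}\cong\HH^{0}(C,\omega_{C}\otimes L^{-1})$ and the source via $\HH^{1}(C,\BO_{C})^{\vee}\cong\HH^{0}(C,\omega_{C})$, and using that cup product is adjoint to multiplication of sections, $A_{1}$ becomes precisely the bilinear pairing $\phi$; hence its determinant in the bases $(s_{i})$, $(t_{j})$ is $\det(\phi(s_{i}\otimes t_{j}))$.

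I expect the main obstacle to be this last identification on a possibly reducible nodal curve: one must check that the differential of the determinantal presentation really is the cup product and that this is compatible with Serre duality for $\omega_{C}$, so that the resulting pairing is literally $\phi$ (with the correct sign) rather than merely a pairing isomorphic to it. The local deformation theory at $L$ is insensitive to the fact that $\Pic^{\ud}(C)$ is only semiabelian, and Serre duality for nodal curves is standard, so the argument ultimately runs parallel to the smooth case; cf. the treatment in \cite{Beauville1977}.
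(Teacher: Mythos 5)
Your argument is correct, but note that the paper does not prove this proposition at all: it is recalled with a citation to Kempf (irreducible case) and to Beauville \cite[Prop.~3.11]{Beauville1977} (reducible case), and your determinantal/Riemann--Kempf argument --- two-term complex computing $R\pi_*\mathcal P$, the rank-drop lemma for $\det M(z)$, identification of the intrinsic linear term with cup product, and Serre duality for the dualizing sheaf --- is essentially the proof given in that cited source. The only point worth making explicit is that one must know $\det\gamma\not\equiv 0$ (i.e.\ that $\Theta$ is a genuine divisor) for $\mathrm{mult}_L\Theta$ to make sense, which is part of Beauville's hypotheses.
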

We will call a singularity of $\Theta$ \emph{exceptional} if the equality doesn't hold above. We define a relation on triples $(i,j,k)$ by
\begin{align*}  i \prec j \prec k &\iff \begin{cases}
    i< j <  k\,, \quad  \text{or} \\
    i\geq k\,, \quad \text{and} \quad j\notin [[k,i]]\,.
\end{cases} \,.\\
i \preceq j \prec k &\iff  (i<j<k) \quad \text{or} \quad (i=j \text{ and } j\neq k) \,.
\end{align*}
We also define $i \prec j \preceq k$ and $i\preceq j \preceq k$ in the obvious way. We then have
\begin{proposition}\label{Prop: Singular Locus of theta for bielliptic Curves}
The singular locus of $\Theta$ is
\[ \Sing(\Theta)=(\beta^\ast)^{-1}(\mathcal{A}\cup \mathcal{B} \cup \mathcal{B}'\cup \mathcal{C} \cup \mathcal{C}' )\,, \]
where
\begin{align*}
    \mathcal{A}&= \bigcup_{i,j}\{ \delta_i +\delta_j +\alpha_N(N_{\underline{d}-2e_i-2e_j} )\}\,,\\ 
    \mathcal{B}&= \bigcup_{i\preceq j\preceq k} \{ \BO_N(P_i^0+P_k^\infty)+\delta_j + \alpha_N(N_{\underline{d}-e_i-2e_j-e_k}) \} \,, \\
        \mathcal{B}'&= \bigcup_{i\prec j\prec k} \{ \BO_N(P_i^\infty+P_k^0)+\delta_j + \alpha_N(N_{\underline{d}-e_i-2e_j-e_k}) \} \,, \\
    \mathcal{C}&= \bigcup_{i\preceq j \prec k \preceq l } \{\BO_N(P_i^0+P_j^\infty+P_k^0+P_l^\infty)+\alpha_N(N_{\underline{d}-e_i-e_j-e_k-e_l}) \} \,,\\
     \mathcal{C}'&= \bigcup_{i\prec j \preceq k \prec l } \{\BO_N(P_i^0+P_j^0+P_k^\infty+P_l^\infty)+\alpha_N(N_{\underline{d}-e_i-e_j-e_k-e_l}) \} \,.
\end{align*}
A general point of $(\beta^\ast)^{-1}(\mathcal{A}\cup \mathcal{B}\cup \mathcal{C})$ is not an exceptional singularity, and a general point in $(\beta^\ast)^{-1}(\mathcal{B}'\cup\mathcal{C}')$ is an exceptional singularity. More precisely, if $\underline{d}=(g)$, then $\mathcal{B}'=\mathcal{C}=\mathcal{C}'=\emptyset$, all singularities are non exceptional, and we have
\[ \Sing_k(\Theta)\coloneqq \{x\in \Theta\,|\, \mult_x \Theta \geq k\}=(\beta^\ast)^{-1}(\mathcal{A}_k\cup \mathcal{B}_k)\]
with 
\begin{align*} 
\mathcal{A}_k&=\{k\delta_N+\alpha_N(N_{g-2k}) \}\,, \\
\mathcal{B}_k&=\{ \BO_N(P^0+P^\infty)+(k-1)\delta_N+\alpha_N(N_{g-2k})\} \,. 
\end{align*}
If $\underline{d}=(1,g-1)$, then $\mathcal{B}'=\mathcal{C}=\mathcal{C}'=\emptyset$, all singularities are non exceptional, and we have
\[ \Sing_k(\Theta)=(\beta^\ast)^{-1}(\mathcal{A}_k\cup \mathcal{B}_k)\]
with 
\begin{align*} 
\mathcal{A}_k&=\{ \Pic^1(N_1)+k\delta_{2}+\alpha_{N_2}(N_{2,g-2k-1}) \}\,, \\
\mathcal{B}_k&=\{\Pic^1(N_1)+ \BO_{N_2}(P_2^0+P_2^\infty)+(k-1)\delta_{2}+\alpha_{N_2}(N_{2,g-2k-1})\} \\
&\quad \cup \{ \BO_N(P_1^0+P_2^\infty)+(k-1)\delta_2+ \alpha_{N_2}(N_{2,g-2k})\} \\
&\quad \cup \{ \BO_N(P_1^\infty+P_2^0)+(k-1)\delta_2+ \alpha_{N_2}(N_{2,g-2k})\}\,. 
\end{align*}

\end{proposition}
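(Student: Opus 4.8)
The plan is to deduce everything from the Riemann Singularity Theorem (Proposition~\ref{Prop: Riemann Singularity Theorem fro reducible curves}). Since $\underline{\deg}(\omega_C)=2\ud$ we have $\chi(\BO_C)=-g$, $p_a(C)=g+1$, and Riemann--Roch on the nodal curve $C$ gives $\hh^0(C,\omega_C-L)=\hh^0(C,L)$ for every $L\in\Pic^\ud(C)$. Hence $L\in\Sing(\Theta)$ if and only if either $\hh^0(C,L)\geq 2$, or $\hh^0(C,L)=1$ and the product $s\cdot t\in\HH^0(C,\omega_C)$ of the two generators $s\in\HH^0(C,L)$, $t\in\HH^0(C,\omega_C-L)$ vanishes, in which case $L$ is by definition an exceptional singularity. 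So the proof splits into two parts: (i) describing $W^1_\ud(C)=\{L:\hh^0(C,L)\geq 2\}$, and (ii) describing the locus of $L$ with $\hh^0(C,L)=1$ and $s\cdot t=0$.

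Part (i) is a refinement of the proof of Proposition~\ref{Prop: Martens Thoerem for singular bielliptic curves}. That argument attaches to each $L$ a ``marked modified dual graph'' and produces an explicit basis of $\HH^0(C,L)=\Ker(\psi)$ consisting of (a) sections supported on a single component $N_i$, of which there are $\max(0,k_i+a_i^0+a_i^\infty-1)$, and (b) ``chain sections'' running along a maximal unmarked segment joining a vertex marked $\{0\}$ to a vertex marked $\{\infty\}$. Thus $\hh^0(C,L)\geq 2$ exactly when the marking data permit two independent such sections, and I would enumerate the cases: two single-component contributions (or one component with $k_i+a_i^0+a_i^\infty\geq 3$), which after using $2P_i^0\sim 2P_i^\infty\sim\delta_i$ is the $\beta^\ast$-preimage of $\mathcal A$; one single-component section carrying a $\delta_j$ summand together with one chain section from some $P_i^0$ to some $P_k^\infty$ passing through $j$, giving $\mathcal B$ with $i\preceq j\preceq k$; and two chain sections along two disjoint arcs of the cycle, giving $\mathcal C$ with $i\preceq j\prec k\preceq l$. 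By the Remark following Proposition~\ref{Prop: Martens Thoerem for singular bielliptic curves} each condition depends only on $\beta^\ast L$, so the locus is $(\beta^\ast)^{-1}$ of the stated union. To see that a general point of $(\beta^\ast)^{-1}(\mathcal A\cup\mathcal B\cup\mathcal C)$ is a \emph{non}-exceptional singularity it suffices, since vanishing of $\det(\phi(s_i\otimes t_j))$ is a closed condition on each irreducible stratum, to exhibit one $L$ in each family with $\hh^0(C,L)=2$ and this $2\times 2$ determinant nonzero; this is a direct computation on the normalization, where a generic choice of the free summand $\alpha_N(N_{\bullet})$ does the job.

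Part (ii): suppose $\hh^0(C,L)=1$, with generators $s,t$ as above. Using $\omega_C|_{N_i}\cong\omega_{N_i}(P_i^0+P_i^\infty)$ and the description of $\HH^0(C,\omega_C)$ as tuples of sections on the $N_i$ with matching residues at the nodes, the product $s\cdot t$ vanishes iff $(s\cdot t)|_{N_i}=0$ for each $i$, i.e.\ iff for every component either $s|_{N_i}=0$ or $t|_{N_i}=0$ (a regular section of $\omega_C|_{N_i}$ is nonzero as soon as both its factors are). Writing $S=\{i:s|_{N_i}\neq 0\}$, $T=\{i:t|_{N_i}\neq 0\}$, each of $S,T$ is a disjoint union of arcs of the cycle, $s$ (resp.\ $t$) vanishes at the relevant $P_\bullet^0$ or $P_\bullet^\infty$ at each boundary node of $S$ (resp.\ $T$), and $s\cdot t=0$ forces $S\cap T=\varnothing$. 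Imposing $\hh^0(C,L)=1$ forces $S$ and $T$ to be as large as possible, so that $S\sqcup T$ covers the cycle with each of $S,T$ a single arc, and pins down the base-point structure on the components where $S$ and $T$ abut; reading this off yields $(\beta^\ast)^{-1}$ of $\mathcal B'$ (when one abutting component additionally carries a $\delta_j$) and of $\mathcal C'$ (the four-endpoint configuration $P_i^0,P_j^0,P_k^\infty,P_l^\infty$). The orientation reversal relative to $\mathcal B,\mathcal C$ — so $P_i^\infty+P_k^0$ in place of $P_i^0+P_k^\infty$, and the relations $i\prec j\prec k$, $i\prec j\preceq k\prec l$ — is exactly the statement that $s$ and $t$ now sit on \emph{complementary} arcs rather than a common one; a general such $L$ has $\hh^0(C,L)=1$, hence is a genuine exceptional singularity. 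The formulas for $\ud=(g)$ and $\ud=(1,g-1)$ then follow by specializing: for $n=1$ there are no two disjoint arcs and no admissible triple $i\prec j\prec k$, so $\mathcal B'=\mathcal C=\mathcal C'=\varnothing$ and $\mathcal A,\mathcal B$ collapse to the powers-of-$\delta$ strata $\mathcal A_k,\mathcal B_k$ cutting out $\Sing_k(\Theta)$; for $n=2$ the component $N_1$ has $d_1=1$ and contributes only its $\Pic^1(N_1)$ factor, leaving the stated strata on $N_2$ plus the two mixed families from chain sections across the two nodes.

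The conceptual content — Riemann Singularity Theorem plus the Martens-type count — is routine; the main obstacle is the combinatorial bookkeeping in parts (i) and (ii): faithfully translating the marked-graph basis and the support-arc analysis of $s,t$ into the precise list $\mathcal A,\mathcal B,\mathcal B',\mathcal C,\mathcal C'$, with the correct orientations ($P^0$ versus $P^\infty$) and the correct strict-versus-non-strict inequalities. A secondary subtlety is the genericity check for (non)exceptionality along each stratum, which requires an honest computation of the Riemann--Roch pairing on the normalization rather than a formal argument.
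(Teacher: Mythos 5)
Your proposal is correct and follows essentially the same route as the paper: apply Beauville's Riemann Singularity Theorem to split $\Sing(\Theta)$ into the locus $\hh^0(C,L)\geq 2$ (classified by refining the marked-dual-graph section count from the Martens-type Proposition~\ref{Prop: Martens Thoerem for singular bielliptic curves}, giving $\mathcal A,\mathcal B,\mathcal C$) and the exceptional locus where $\hh^0(C,L)=1$ and $s\cdot t=0$ (classified by the disjoint-arc support analysis of $s$ and $t$, giving $\mathcal B',\mathcal C'$), then specialize to $\ud=(g)$ and $\ud=(1,g-1)$. You in fact supply more detail than the paper does on the genericity check for (non)exceptionality along each stratum.
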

\begin{proof}
By \ref{Prop: Riemann Singularity Theorem fro reducible curves}, a point $L\in \Sing(\Theta)$ either verifies $\hh^0(C,L)\geq 2$ or $\hh^0(C,L)=1$ and $st=0$ where $\HH^0(C,L)=\langle s\rangle $ and $\HH^0(C,\omega_C-L)=\langle t \rangle$. From the proof of \ref{Prop: Martens Thoerem for singular bielliptic curves} it is clear that any $L$ with $\hh^0(C,L)\geq 2$ has to be in $\mathcal{A},\mathcal{B}$, or $\mathcal{C}$. It is also straightforward to check that an exceptional singularity has to be in $\mathcal{B}'$ or $\mathcal{C}'$: indeed if $L$ is an exceptional singularity, there is a section $s\in \HH^0(C,L)$ such that $s\cdot \HH^0(C,\omega_C-L)=0$. This implies that we can find $i,j $ such that $s$ is zero say on components $l$ with $i\prec l \prec j$ and non-zero on $i$ and $j$. There are also $i',j'$ such that $\HH^0(C,\omega_C-L)$ is supported on components $l$ with $i'\prec l \prec j'$, and 
\[ i \preceq i' \preceq j' \preceq j \,. \]
If $i'=j'$ we are in $\mathcal{B}'$, else we are in $\mathcal{C}'$.
\par
When $\underline{d}=(g)$ or $\underline{d}=(1,g-1)$, $\mathcal{B}'=\mathcal{C}=\mathcal{C}'=\emptyset$ for degree reasons. It is immediate that any line bundle in $\mathcal{A}_k$ or $\mathcal{B}_k$ is not exceptional and thus the assertion about the multiplicities of the singularities follow from \ref{Prop: Riemann Singularity Theorem fro reducible curves}.

\end{proof}

\subsection{The Abel-Jacobi map}\label{Sec: The Abel-Jacobi Map}
Recall that sections of $\omega_{C}$ are $1$-forms $\omega$ on $N$ which can have poles at $P^0_i$ and $P^\infty_i$, subjected to the conditions
\begin{equation}\label{Equ: sections of omega C are section of omega N whose residue is zero} \mathrm{Res}_{P^\infty_i}\omega + \mathrm{Res}_{P^0_{i+1}} \omega = 0 \,, \quad \text{for} \quad i\in \ZZ/n\ZZ \,. 
\end{equation}
We thus have an inclusion of $\BO_{C}$-modules
\[ \beta_\ast \omega_N \subset \omega_{C''} \subset \beta_\ast \omega_N(\sum_i P^0_i+P^\infty_i) \,. \]
From what precedes we have
\begin{equation}\label{Equ: embedding of HH0 N omegaN into HH0 C omegaC} \HH^0(N,\omega_N) \subset \HH^0(C,\omega_{C}) \subset \HH^0(N,\omega_N(\sum_{i=1}^n P_i^0+P_i^\infty ))\,. 
\end{equation}
 Let $s_E$ be a generator of $\HH^0(E,\omega_E)$. As a $1$-form, $s_E$ is given on $E_i$ by $dz/z$ for a coordinate $z$ centered at $0$. Let $s_{R}=p^\ast s_E$ be the pullback as a $1$-form. $\pi_i:N_i\to E_i$ is ramified at $R_i+P_i^0+P_i^\infty$ thus $\divv (s_{R})=R$ as a section of $\omega_{C}$. For dimension reasons we have
\[ \HH^0(C,\omega_{C})=\HH^0(N,\omega_N)\oplus \langle s_{R} \rangle \,. \]
We see from the above discussion that $\HH^0(N,\omega_N)$ (resp. $\langle s_{R}\rangle$) is the -1 (resp. +1) eigenspace for the action of $\tau$ on $\HH^0(C,\omega_{C})$. We define
\begin{align*}
    |\omega_C|\coloneqq  \PP\HH^0(C,\omega_C) \,, \quad 
    |\omega_C|^-\coloneqq  \PP \HH^0(C,\omega_C)^- \,, \quad 
    |\omega_N| \coloneqq  \PP \HH^0(N,\omega_N) \,.
\end{align*}
We define a divisor to be singular if it intersects with the singular locus. The following lemma is very simple, but crucial:
\begin{lemma}\label{Lem: H in omegaC is singular iff H in omegaN}
    With the above notations, a divisor $H\in |\omega_C|$ is singular if and only if $H\in |\omega_C|^-$, and in that case
    \[ \sum_{i=0}^\infty P_i^0+P_i^\infty \leq \beta^\ast H\,.\]
\end{lemma}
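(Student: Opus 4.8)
The plan is to compare $\HH^0(C,\omega_C)$ and $\HH^0(N,\omega_N)$ directly via the eigenspace decomposition established just above the lemma. Recall $\HH^0(C,\omega_C)=\HH^0(N,\omega_N)\oplus\langle s_R\rangle$, where $\HH^0(N,\omega_N)$ is the $(-1)$-eigenspace and $\langle s_R\rangle$ the $(+1)$-eigenspace for $\tau$, and $\divv(s_R)=R$ as a section of $\omega_C$, so in particular $s_R$ does not vanish at any $P_i^0$ or $P_i^\infty$ (the $P_i^{0},P_i^\infty$ are ramification points of $\pi_i$, not zeros of the pulled-back form). Writing a general $\omega\in\HH^0(C,\omega_C)$ as $\omega=\eta+c\,s_R$ with $\eta\in\HH^0(N,\omega_N)$, I would analyse the divisor $H=\divv(\omega)$ on $C$, keeping track of its behaviour at the nodes $Q_i$.

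First I would recall from Proposition~\ref{Prop: Singular Locus of theta for bielliptic Curves} (or rather from the description preceding it, via the Riemann Singularity Theorem) what it means for $H\in|\omega_C|$ to be singular: $H$ singular means $H$ meets $\Sing(\Theta)$, equivalently there is a decomposition realizing one of the configurations $\mathcal A,\mathcal B,\mathcal B',\mathcal C,\mathcal C'$; the common feature is that every such configuration forces the relevant line bundle $\omega_C-L$ (with $L$ the corresponding effective divisor class) to contain the points $P_i^0,P_i^\infty$. Concretely, I expect the cleanest route is: a divisor $H=\divv(\omega)$ is singular iff $\omega$, viewed as a $1$-form on $N$ with at worst simple poles at the $P_i^{0},P_i^\infty$ constrained by the residue conditions \eqref{Equ: sections of omega C are section of omega N whose residue is zero}, actually lies in $\HH^0(N,\omega_N)$, i.e. has no poles at all. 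So the real content is: $\omega$ has a genuine pole at some $P_i^{0}$ or $P_i^\infty$ $\iff$ $H$ is nonsingular.

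For the forward implication ($H$ singular $\Rightarrow$ $H\in|\omega_C|^-$): if $c\neq 0$ then $\omega=\eta+c\,s_R$ is nonzero at each $P_i^0,P_i^\infty$ (since $\eta$ has a zero or pole there compatible with the residue condition but $s_R$ does not vanish there, and the parities/eigenvalue considerations prevent cancellation — this is where a short local computation at each node enters), so $\beta^\ast H$ does not contain $\sum_i(P_i^0+P_i^\infty)$, and by the characterisation of singular divisors this forces $H$ nonsingular, a contradiction. Hence $c=0$, i.e. $\omega\in\HH^0(N,\omega_N)$ and $H\in|\omega_C|^-$. Moreover for $\omega\in\HH^0(N,\omega_N)$, the form is holomorphic on $N$ and vanishes at each ramification point of $\pi_i$ above a node, in particular at every $P_i^0$ and $P_i^\infty$ (each $P_i^{\bullet}$ is a ramification point of $\pi_i$), giving $\sum_i(P_i^0+P_i^\infty)\le\beta^\ast H$. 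For the reverse implication, once $\beta^\ast H\ge\sum_i(P_i^0+P_i^\infty)$ one reads off directly that $H$ meets one of the loci in Proposition~\ref{Prop: Singular Locus of theta for bielliptic Curves}, e.g. $\mathcal A$ or $\mathcal B$ depending on the remaining decomposition of $\beta^\ast H-\sum_i(P_i^0+P_i^\infty)$ into $\tau$-orbits.

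The main obstacle is the local analysis at the nodes $Q_i$: one must verify carefully that for $c\neq 0$ the combination $\eta+c\,s_R$ cannot acquire a zero at $P_i^0$ or $P_i^\infty$ forced by some coincidence — i.e. that the $(+1)$-eigenform $s_R$ genuinely "separates" the branches at each node in the sense that $H=\divv(\omega)$ with $\omega\notin\HH^0(N,\omega_N)$ never contains a full fibre $P_i^0$ (or $P_i^\infty$). This should follow from the fact that $s_R=p^\ast s_E$ with $s_E=dz/z$ having nonzero residue at $Q_i^0,Q_i^\infty$, so $s_R$ has nonzero residue at the $P_i^\bullet$, whereas any $\eta\in\HH^0(N,\omega_N)$ is holomorphic there; adding a nonzero multiple of a form with a simple pole to a holomorphic form yields a form with a simple pole, which in particular is nonvanishing at that point. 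I would also double-check the edge cases $\ud=(g)$ (where $N$ is irreducible with a single node) and $\ud=(1,g-1)$, but these are covered by the same argument.
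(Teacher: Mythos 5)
Your core mechanism --- write $\omega=\eta+c\,s_R$ with $\eta\in\HH^0(N,\omega_N)$ and compare the local behaviour at the node preimages $P_i^0,P_i^\infty$ --- is exactly the paper's proof, and the observation in your final paragraph (for $c\neq 0$ the form $\eta+c\,s_R$ has a simple pole at each $P_i^\bullet$, hence is nonvanishing at the node as a section of $\omega_C$; for $c=0$ it is residue-free there, hence vanishes at every node) is the whole content. However, two things in the write-up are genuinely wrong. First, ``singular'' here does not mean ``$H$ meets $\Sing(\Theta)$'': the sentence immediately preceding the lemma defines a divisor to be singular if its support meets the singular locus \emph{of the curve}, i.e.\ contains a node $Q_i$. (Indeed ``$H$ meets $\Sing(\Theta)$'' does not even typecheck: $H$ is an effective divisor on $C$, while $\Sing(\Theta)$ lives in $\Pic^{\ud}(C)$.) Consequently the planned detours through Proposition \ref{Prop: Singular Locus of theta for bielliptic Curves} --- both in the forward direction (``by the characterisation of singular divisors this forces $H$ nonsingular'') and in the reverse direction (``$H$ meets one of the loci $\mathcal A$ or $\mathcal B$'') --- are ill-posed and unnecessary. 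With the correct definition both implications follow at once from the local computation: $H$ contains $Q_i$ iff the section vanishes at $Q_i$ iff the corresponding $1$-form on $N$ has zero residue at $P_i^0$ and $P_i^\infty$, which for $\omega=\eta+c\,s_R$ happens iff $c=0$, and then it happens at \emph{every} node simultaneously.

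Second, your justification of the containment $\sum_i(P_i^0+P_i^\infty)\leq\beta^\ast H$ --- that a holomorphic form ``vanishes at each ramification point of $\pi_i$'' --- is false: holomorphic $1$-forms do not in general vanish at ramification points of a covering (e.g.\ $dx/y$ on a hyperelliptic curve does not vanish at all Weierstrass points). The correct reason, which you in fact use elsewhere in the proposal, is a statement about the dualizing sheaf and has nothing to do with ramification of $\pi_i$: near a node, $\omega_C$ is generated by a form with simple poles on both branches, so a form $\eta$ holomorphic at $P_i^0,P_i^\infty$ is, as a section of $\omega_C$, divisible by the local equation of the node; its divisor therefore contains $Q_i$ with multiplicity at least one on each branch, and the pullback of that Cartier divisor to the normalization contains $P_i^0+P_i^\infty$. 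With these two corrections your argument collapses to the paper's three-line proof.
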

\begin{proof}
    Let $H=\divv ( \lambda s_R+s)\in |\omega_C|$, where $s\in \HH^0(C,\omega_C)^-$ and $\lambda\in \CC$. By what precedes, $s$ comes from a section of $\HH^0(N,\omega_N)$. Sections of $\omega_N$ are holomorphic $1$-forms, thus immediately verify \ref{Equ: sections of omega C are section of omega N whose residue is zero}. As sections of $\omega_C$, they vanish at the singular points. $s_R$ is non-zero at the singular points thus $H$ is singular if and only if $\lambda=0$. In that case, $H$ vanishes at all the singular points.
\end{proof}
We thus have a canonical identification $\rho: |\omega_C|^- \overset{\sim}{\longrightarrow} |\omega_N|$ corresponding on the locus of non-singular divisors to
\begin{equation}\label{Definition of rho: omega C to omega N}
    \rho(H)= \beta^\ast H - \sum_{i=1}^n (P_i^0+P_i^\infty) \,.
\end{equation}

The Abel map is well known in the case of smooth, or singular irreducible curves. But for singular reducible curves the situation is much more technical. We will now show how to construct a candidate for the Abel map in the case of cyclic curves. In that case $JC$ sits in an exact sequence
\[ 0 \to \CC^\ast \to JC \to JN \to 0 \]
It is well known (see \cite{Serre1988AlgGroupsClassFields}) that
\[ \Ext(JN,\CC^\ast)\simeq \widehat{JN} \simeq JN\,, \]
and that under this identification, by \cite[Cor 12.5]{OdaSeshadri1979GenJac}, the extension defining $JC$ corresponds to the line bundle
\[ \eta \coloneqq \BO_N (\sum_{i=1}^n P_i^0-P_i^\infty) \in JN \,.\]
The corresponding line bundle on $JN$ is
\[ L^\eta\coloneqq \mathscr{L}\otimes  \tau_\eta \mathscr{L}^{-1}=\tau_{\eta_0}\mathscr{L}\otimes \tau_{\eta_\infty}\mathscr{L}^{-1}\in \widehat{JN}\,, \]
where $\mathscr{L}$ is the principal polarization on $JN$, $\tau_x$ is the translation by $x$ and
\[ \eta_0\coloneqq \BO_N(P_1^0+\cdots+P_n^0)\,, \quad \eta_\infty \coloneqq  \BO_N(P_1^\infty+\cdots+P_n^\infty) \,. \]
The corresponding extension is 
\[ JC \simeq L^\eta\setminus JN \]
where $JN\hookrightarrow L^\eta$ embeds as the $0$ section. We define 
\[ \overline{JC} \coloneqq \PP(L^\eta\oplus \BO_{JN} )= \PP(\tau_{\eta_0} \mathscr{L} \oplus \tau_{\eta_\infty} \mathscr{L}) \]
be the associated $\PP^1$-bundle. $\tau_{\eta_0}\mathscr{L}$ and $\tau_{\eta_\infty}\mathscr{L}$ canonically define bundles on $\Pic^\ud(N)$, thus we will see $\PPP$ as a $\PP^1$-bundle on $\Pic^\ud(N)$ from now on. This is of course not the usual compactification of the Picard scheme, but this will be the convient compactification for our computations. Let 
\[ \alpha_N:N_\ud \to \Pic^\ud(N)\]
be the Abel-Jacobi map, where 
\[ N_\ud \coloneqq N_{1,d_1}\times \cdots N_{n,d_n} \]
is the product of the symmetric product of the curves $N_1,\dots,N_n$. We have for $k\in \{0,\infty\}$
\[ \alpha_N^\ast \tau_{\eta_k} \mathscr{L} = \BO_{N_\ud}(B^k) \,, \quad \text{with} \quad B^k \coloneqq \sum_{i=1}^n (P_i^k+N_{i,d_i-1})\prod_{j\neq i} N_{j,d_j}\,. \]
Let $s^0,s^\infty$ be the sections on $\Nd$ corresponding to $B^0$ and $B^\infty$ respectively. Let
\[ \PPN\coloneqq \Nd\times_{\Pic^\ud(N)} \PPP \,. \]
We have the following commutative diagram
\begin{equation}\label{Tikzcd: definition of tNd}
\begin{tikzcd}
    \PPN \arrow[d] \arrow[r] & \PPP\arrow[d]\\
     \Nd \arrow[u,dashed, bend left=30, "{(s^0 , s^\infty)}"] \arrow[r,"\alpha_N"'] & \Pic^\ud(N)
\end{tikzcd}
\end{equation}
Let $b:\tilde{N}_{\underline{d}}\coloneqq \Bl_B N_{\underline{d}}\to \Nd $ be the blowup at $B\coloneqq B^0\cap B^\infty$. This resolves the indeterminancy of $(s^0,s^\infty)$
\begin{equation}\label{Diag: Def of tNd with more details}
    \begin{tikzcd}
    \tilde{N}_{\underline{d}} \arrow[dr ,"b"'] \arrow[rr, bend left=20, "\alpha"] \arrow[r,hook,"i_\tNd"'] 
         & \PPN  \arrow[r] \arrow[d,"q_N"] & \PPP \arrow[d,"q"]\\  
         & N_{\underline{d}}  \arrow[r,"\alpha_N"'] & \Pic^\ud(N)
    \end{tikzcd}\,,
\end{equation} 
and $\alpha$ is the Abel-Jacobi map we were looking for. By standard intersection theory we have
\begin{equation}\label{Equ: Class of tilde Nd (blowup of Nd)}
\begin{split}
    [\tNd]&=x_1+\cdots+x_n+h'\in \HH^2\left(\PPN,\QQ\right) \,,
\end{split}
\end{equation}
where $h'=c_1(\BO_\PPP(1))\in \HH^2(\PPP,\QQ)$ is the hyperplane section coming from the $\PP^1$-bundle structure and $x_i=[N_{i,d_i-1}]\in \HH^2(N_{i,d_i},\QQ)$ (we make the abuse of notation of omitting the pullback notation when it is clear). For $k\in\{0,\infty\}$ let
    \[ B_i^k \coloneqq P_i^k+ N_{\ud-e_i} \subset N_\ud \,,\]
    and $s_i^k\in \HH^0(\Nd,\BO_\Nd(B_i^k))$ the corresponding section. By definition we have
    \[ B=\bigcup_{i,j} B_i^0\cap B_j^\infty \,. \]
    In particular, locally $\tNd$ is defined inside $\PPN$ by the vanishing of
    \begin{equation}\label{Equ: equation defining tNd}
    \lambda s_1^0 \cdots s_n^0-\mu s_1^\infty \cdots s_n^\infty \,, \end{equation}
    where $(q_N,\lambda:\mu):\PPN\restr{U} \to U\times \PP^1$ is a local trivialisation of the $\PP^1$-bundle on an open set $U\subset \Nd$.
\begin{lemma}\label{Lem: singularities of tNd}
    Above non-singular divisors, $\tNd$ is smooth. Let $\tilde{D}=(D,\lambda:\mu)\in \tNd$ be a point above a singular divisor, where we use the notations of \ref{Equ: equation defining tNd}. Let
    \begin{align*}
        k\coloneqq &\# \{i\,|\, P_i^0\leq D\} + \delta_{\lambda,0}\,, \\
        l\coloneqq &\# \{i\,|\, P_i^\infty\leq D\} + \delta_{\mu,0}\,,
    \end{align*}
    where $\delta_{\lambda,0}=1$ if $\lambda=0$ and $0$ otherwise. We then have a local analytic isomorphism
    \[ (\tNd,\tilde{D}) \simeq (V(x_1 x_2\dots x_k-x_{k+1}x_{k+2}\cdots x_{k+l}),0)\subset (\AAA^{g+1},0)\,. \]
\end{lemma}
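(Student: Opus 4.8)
The plan is to analyze $\tNd$ locally by means of the explicit local equation \eqref{Equ: equation defining tNd}, exploiting that $\Nd=N_{1,d_1}\times\cdots\times N_{n,d_n}$ is smooth of dimension $d=g+1$ (recall $d_i=\deg R_i/2$ and $\sum d_i=g+1$ under the stability hypothesis) and that $\PPN\to\Nd$ is a $\PP^1$-bundle, hence smooth of dimension $g+2$. First I would record that away from singular divisors the two sections $s^0$ and $s^\infty$ cannot vanish simultaneously: by \eqref{Definition of rho: omega C to omega N} and Lemma~\ref{Lem: H in omegaC is singular iff H in omegaN}, a divisor $D$ with $P_i^0\leq D$ and $P_j^\infty\leq D$ for some $i,j$ is singular (it meets a singular point of $C$), so over the locus of non-singular divisors the section $(s^0,s^\infty)$ of the $\PP^1$-bundle is everywhere defined and $\tNd$ is its graph, hence smooth — and moreover $b$ restricts to an isomorphism there. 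This disposes of the first assertion.

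For the second assertion, fix $\tilde D=(D,\lambda:\mu)$ above a singular divisor. I would choose local analytic coordinates on $\Nd$ near $D$ adapted to the multiplicities: for each $i$ with $P_i^0\leq D$, the section $s_i^0$ cutting out $B_i^0=P_i^0+N_{\ud-e_i}$ is a coordinate function $x_i^0$ vanishing to order exactly one at $D$ (since $R$, and hence each ramification point $P_i^0$, is reduced, the incidence divisor $B_i^0$ is smooth at $D$ and the various $B_i^0$, $B_j^\infty$ that pass through $D$ meet transversally — this is where I expect the only real work, see below); similarly $s_i^\infty$ gives a coordinate $x_i^\infty$ when $P_i^\infty\leq D$. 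The remaining $s_i^0$ (for $i$ with $P_i^0\not\leq D$) are local units near $D$, as are the remaining $s_i^\infty$. Writing $k'=\#\{i: P_i^0\leq D\}$ and $l'=\#\{i: P_i^\infty\leq D\}$, the defining equation \eqref{Equ: equation defining tNd} becomes, up to local units,
\[
\lambda\, x_1^0 x_2^0\cdots x_{k'}^0 \;-\; \mu\, x_1^\infty x_2^\infty\cdots x_{l'}^\infty \;=\;0
\]
inside $U\times\PP^1$. Now distinguish the three cases according to the point $(\lambda:\mu)$ of the $\PP^1$-fibre. If $\lambda\neq 0\neq\mu$ (so $k=k'$, $l=l'$), we may work in the affine chart where $\mu/\lambda$ is a coordinate $t$; but in fact $t$ is then determined, $t=\prod x_i^0/\prod x_i^\infty$ is not regular, so instead one eliminates $t$ and the equation reads $x_1^0\cdots x_{k'}^0 = x_1^\infty\cdots x_{l'}^\infty$ (times a unit) in the smooth ambient space $U$, of dimension $g+1$, giving the stated normal-crossings-type model with $k=k'$, $l=l'$. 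If $\lambda=0$ (so $\mu\neq 0$, $k=k'+1$), set $s=\lambda/\mu$, a coordinate on the $\PP^1$-chart; the equation becomes $s\cdot(\text{unit}) = x_1^\infty\cdots x_{l'}^\infty$, i.e. renaming $s=x_{k'+1}$ (there is no surviving $x^0$-factor if $k'=0$, but then $k=1$), we get $x_1\cdots x_{k'+1} = x_1^\infty\cdots x_{l'}^\infty$ in $U\times\AAA^1_s$, again a space of dimension $g+2$ cut by one equation, hence dimension $g+1$, matching $V(x_1\cdots x_k - x_{k+1}\cdots x_{k+l})$ with $k=k'+1$, $l=l'$. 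The case $\mu=0$ is symmetric. One checks the counts: the ambient dimension is always $g+1$ (after eliminating the $\PP^1$-coordinate when it is a unit ratio) or $g+2$ cut by one relation, so the hypersurface has the claimed dimension, and the coordinates $x_1,\dots,x_{k+l}$ together with the remaining free coordinates (the $x_i^0, x_i^\infty$ not through $D$ contribute units, the transverse directions in $\Nd$ contribute the extra coordinates up to $\AAA^{g+1}$) furnish the asserted local analytic isomorphism.

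The main obstacle is the transversality claim: that the divisors among $\{B_i^0\}_{P_i^0\leq D}\cup\{B_j^\infty\}_{P_j^\infty\leq D}$ passing through $D$ are smooth at $D$ and meet transversally, so that the $x_i^0,x_j^\infty$ really form part of a coordinate system. I would prove this by the standard description of the symmetric product: near $D=\sum_i D_i$ with $D_i\in N_{i,d_i}$, a neighbourhood of $D_i$ in $N_{i,d_i}$ is a product of small symmetric products, one for each support point of $D_i$; since $P_i^0$ and $P_i^\infty$ are distinct reduced points of $N_i$ and of $D_i$ (as $R_i$ is reduced and disjoint from $P_i^0,P_i^\infty$ up to our conventions), the incidence condition "$P_i^0\leq(-)$" is, in these coordinates, the vanishing of the single elementary-symmetric coordinate attached to the cluster at $P_i^0$, hence a smooth coordinate hyperplane, and the conditions at $P_i^0$, at $P_i^\infty$, and on different factors $N_{j,d_j}$ involve disjoint coordinates — giving transversality. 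With this in hand the local computation above goes through verbatim, and one reads off that exactly the factors with $P_i^0\leq D$ (plus one more if $\lambda=0$) appear on the left and those with $P_i^\infty\leq D$ (plus one more if $\mu=0$) on the right, which is the definition of $k$ and $l$ in the statement.
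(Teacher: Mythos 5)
Your approach is the same as the paper's: the paper's proof consists of two sentences, observing that (i) above non-singular divisors $b:\tNd\to\Nd$ is a local isomorphism, and (ii) above singular divisors the claim follows from the local equation \ref{Equ: equation defining tNd} together with the fact that the divisors $B_i^k=\divv s_i^k$ are smooth normal crossing divisors on $\Nd$. You correctly identify that normal-crossing statement as the real content, and your proof of it via the local product decomposition of the symmetric products at the distinct reduced points $P_i^0,P_i^\infty$ is exactly the intended justification. There are, however, two slips, one of which derails the argument as literally written.

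First, the dimension bookkeeping: $\sum_i d_i=g$, not $g+1$. Indeed $g(N_i)=d_i$ by Riemann--Hurwitz for $\pi_i$, and $p_a(C)=\sum_i g(N_i)+\#\{\text{nodes}\}-n+1=\sum_i d_i+1=g+1$. Hence $\dim\Nd=g$, $\dim\PPN=g+1$, and $\dim\tNd=g$, which matches the target $V(x_1\cdots x_k-x_{k+1}\cdots x_{k+l})\subset\AAA^{g+1}$. As written, your two charts produce local models of different dimensions ($g$ after ``eliminating $t$'', $g+1$ in the chart $\lambda=0$), which should have been a red flag.

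Second, and more substantively, in the case $\lambda\neq0\neq\mu$ you cannot eliminate $t=\mu/\lambda$: the coefficient of $t$ in $\prod_i s_i^0-t\prod_i s_i^\infty$ vanishes at the point in question, so $t$ is not a function of the $U$-coordinates there, and projecting to $U$ would give all of $U$ rather than a hypersurface (the image of $\tNd$ in $\Nd$ is everything). The correct move is to keep $t$ as a free coordinate on $U\times\AAA^1_t\simeq\AAA^{g+1}$ and, writing $\prod_i s_i^\infty=v\,z_1\cdots z_{l'}$ with $v$ a unit, absorb the unit $tv$ into $z_1$; the equation then becomes $y_1\cdots y_{k'}-z_1'\cdots z_{l'}=0$ with $t$ among the remaining free coordinates --- entirely parallel to your $\lambda=0$ chart, where $s=\lambda/\mu$ itself becomes the extra factor $x_{k'+1}$. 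With these corrections your argument is complete and coincides with the paper's.
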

\begin{proof}
    Above non-singular divisors, the blowup $b:\tNd \to \Nd$ is a local isomorphism, thus $\tNd$ is smooth. Above singular divisors, the assertion follows from \ref{Equ: equation defining tNd} and the fact that for $k\in \{0,\infty \}$ and $1\leq i\leq n$, the divisors $B^k_i=\divv s^k_i$ are smooth normal crossing divisors on $\Nd$.
\end{proof}
Let $\overline{\Theta}$ be the closure of $\Theta$ in $\PPP$. Clearly we have a surjection $\alpha:\tNd \to \overline{\Theta}$. Although $\alpha$ is not a resolution of singularities, the singularities of $\tNd$ are much simpler than those of $\overline{\Theta}$.

\subsection{The conormal variety to theta}
Recall that the (projectivised) conormal variety is defined by
\[ \PP\Lambda_\Theta \coloneqq \overline{ \{ (x,H)\in \PP T^\vee JC \,|\, x\in \Theta_{\mathrm{sm}}\,, \, T_x \subset \Ker H\} } \subset \PP T^\vee JC  \,.\]
Since the cotangent space to $JC$ is trivial and canonically identified with $JC\times \HH^0(C,\omega_C)$, we will from now on view $\PP\Lambda_\Theta$ inside $JC\times |\omega_C|$. We define the projections
\[ 
\begin{tikzcd} 
N_{i,d_i} & \Nd \arrow[l] & \Nd\times |\omega_C|  \arrow[l,"p"] \arrow[ll,bend right=15, "p_{i}"'] \arrow[r,"\gamma"] & {|\omega_C|} 
\end{tikzcd} \quad \text{for all $1\leq i \leq n$}\,. 
\] 
We definine $\PP\Lambda_{\Nd}\subset \Nd\times |\omega_C|$ as the vanishing locus (i.e. the $0$-th determinantal variety) of the following composition of maps of vector bundles
\[ \gamma^\ast \BO_{|\omega_C|}(-1) \hookrightarrow \HH^0(C,\omega_C)\hookrightarrow \bigoplus_{i=1}^n \HH^0(N_i,\omega_{N_i}(P_i^0+P_i^\infty)) \overset{\oplus  \mathrm{ev}_i}{\longrightarrow} \bigoplus_{i=1}^n p_{i}^\ast E_{K,i} \,, \]
where the vector spaces are identified with the corresponding trivial vector bundles, and $E_{K,i}$ are the evaluation bundles on $N_{i,d_i}$ associated to the line bundle $\omega_{N_i}(P_i^0+P_i^\infty)$, and $\mathrm{ev}_i$ are the evaluation maps (see \cite[p. 339]{arbarello} for the definition of $E_{K,i}$). Thus set-theoretically we have
\begin{equation}\label{Equ: set-theoretic description of Lambda Nd}
    \PP\Lambda_{N_\ud}=\left\{ (D,H)\in \Nd\times |\omega_C|\,\big|\, D\leq \beta^\ast H \right\}\,.
\end{equation}
By \cite[p. 340]{arbarello}, for all $r\geq 0$, we have
\begin{equation}\label{Equ: chern class of E K,i (singular cas)}
    c_r(E_{K,i})=\sum_{k=0}^r \binom{r}{k} x_i^k \frac{\theta_i^{r-k}}{(r-k)!} \in \HH^{2r}(N_{i,d_i})\,.
\end{equation}
We also make the following computations: using Poincaré's Forumla \cite[p. 25]{arbarello} we have
\begin{equation}\label{Equ: alpha pushforward classes of E K,i (singular case)}
    \begin{split}
        \alpha_{N_i,\ast}(c_r(E_{K,i}))&=\frac{\theta_i^r}{r!} \sum_k \binom{r}{k} \binom{r}{k}=\frac{\theta_i^r}{r!} \binom{2r}{r}\in \HH^{2r}(JN_i,\QQ)\,, \\
        \alpha_{N_i,\ast}(x_i c_r(E_{K,i}))&=\frac{\theta_i^{r+1}}{(r+1)!} \sum_k \binom{r}{k} \binom{r+1}{k+1} = \frac{\theta_i^{r+1}}{(r+1)!} \binom{2r+1}{r+1}\,, \\
        \alpha_{N_i,\ast}(x_i^2 c_r(E_{K,i}))&= \frac{\theta_i^{r+2}}{(r+2)!} \binom{2r+2}{r+2} \,.
    \end{split}
\end{equation}

We have the following
\begin{proposition}\label{Prop: Lambda Nd is irreducible of dim g}
    Suppose $g\geq 3$, then the projection $\PP\Lambda_\Nd\to \Nd$ is birational. In particular $\PP\Lambda_\Nd$ is irreducible of dimension $g$.
\end{proposition}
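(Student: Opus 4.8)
The plan is to study the first projection $p_N : \PP\Lambda_\Nd \to \Nd$ and show it is generically one-to-one. The key point is the set-theoretic description \eqref{Equ: set-theoretic description of Lambda Nd}: the fiber over a divisor $D\in \Nd$ is the projectivized space of canonical forms $H\in |\omega_C|$ with $D\leq \beta^\ast H$, i.e. (by Lemma \ref{Lem: H in omegaC is singular iff H in omegaN}, since such an $H$ must be singular) the projectivization of the sections of $\omega_N$ that, together with the mandatory vanishing at all $P_i^0, P_i^\infty$, vanish along $D$. Concretely, identifying $|\omega_C|^-$ with $|\omega_N|$ via $\rho$, the fiber over $D$ is identified with the linear system $|\omega_N - \sum_i(P_i^0+P_i^\infty) - D|$ of canonical divisors on $N$ containing $D$. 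So I must show that for a general $D=(D_1,\dots,D_n)\in \Nd$ (with $\deg D_i = d_i$, $\sum d_i = g$), there is exactly one such canonical divisor, i.e.
\[ \hh^0\!\left(N, \omega_N\Big(-\sum_{i=1}^n(P_i^0+P_i^\infty) - D\right)\right) = 1 . \]

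First I would record that $\HH^0(N,\omega_N) \cong \HH^0(C,\omega_C)^-$ has dimension $g$ (this is exactly the eigenspace decomposition $\HH^0(C,\omega_C) = \HH^0(N,\omega_N)\oplus\langle s_R\rangle$ from Section \ref{Sec: The Abel-Jacobi Map}, together with $\hh^0(C,\omega_C)=p_a(C)=g+1$). Thus each defining constraint "$H$ passes through a point of $D$" is one linear condition on a $g$-dimensional space, and imposing $D$ (degree $g$) cuts down to an \emph{expected} dimension $\hh^0 = 0$, i.e. projective fiber dimension $-1$; but the sections already vanish at the $2n$ points $P_i^0,P_i^\infty$, so generically we land in the regime where the expected fiber is a single point. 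I would make this precise with a dimension count on the incidence variety: $\PP\Lambda_\Nd$ is cut out inside $\Nd\times|\omega_C|$ (dimension $g + g = 2g$) by the determinantal condition coming from $\bigoplus_i p_i^\ast E_{K,i}$, which has codimension $\sum d_i = g$ at points where the evaluation map drops rank by exactly one; hence every component of $\PP\Lambda_\Nd$ has dimension $\geq g$. To get the reverse inequality and birationality simultaneously, I would exhibit a single divisor $D$ whose fiber is a reduced point — e.g. take $D$ supported at general points of the $N_i$ so that the evaluation map $\HH^0(N,\omega_N(-\sum(P_i^0+P_i^\infty))) \to \bigoplus \CC_{D}$ is injective (using that $\omega_N(-\sum(P_i^0+P_i^\infty))$, being a direct sum of degree-$2d_i$ line bundles on the $N_i\cong\PP^1$ of the form $\BO(2d_i-2)$, is base-point free on each component, so general points impose independent conditions, and $g$ general points on a $g$-dimensional space leave only the zero section) — this forces $\hh^0(\omega_N(-\sum(P_i^0+P_i^\infty)-D))=0$, so the fiber of $p_N$ over such $D$ is a single reduced point. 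Since $\Nd$ is irreducible of dimension $g$ and $p_N$ is proper with a section over its image having a one-point reduced fiber somewhere, the component of $\PP\Lambda_\Nd$ dominating $\Nd$ is birational to $\Nd$ and has dimension exactly $g$; the dimension bound $\geq g$ on all components then forces $\PP\Lambda_\Nd$ to be irreducible of dimension $g$.

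The main obstacle I anticipate is ruling out \emph{extra} components of $\PP\Lambda_\Nd$ not dominating $\Nd$ — i.e. loci where the fiber of $p_N$ jumps to positive dimension over a positive-dimensional base, and where this jump is large enough that the total dimension exceeds $g$. The cleanest way to control this is to bound, for each $k\geq 0$, the dimension of the locus $\{D\in\Nd : \hh^0(\omega_N(-\sum(P_i^0+P_i^\infty)-D)) \geq k+1\}$; this is governed by a Brill–Noether / Martens-type estimate on the reducible curve, and Proposition \ref{Prop: Martens Thoerem for singular bielliptic curves} (applied after translating by $\sum(P_i^0+P_i^\infty)$, which identifies the relevant linear systems with $W^r_\ud(C)$-type loci, using the Remark that these are pullbacks from $\Pic^\ud(N)$) gives $\dim \leq d - 2k - 1$ there, while the generic fiber over such a locus has projective dimension $k$ — so the corresponding piece of $\PP\Lambda_\Nd$ has dimension $\leq (g - 2k - 1) + k = g - k - 1 < g$ for $k\geq 1$, as needed. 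Assembling these estimates (and noting the hypothesis $g\geq 3$ is what makes the $k=1$ stratum have strictly smaller dimension and keeps $\Nd$ itself irreducible of the expected dimension) completes the argument.
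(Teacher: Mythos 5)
Your argument breaks at the very first step, and the error propagates through everything that follows. You assert that any $H\in|\omega_C|$ with $D\leq \beta^\ast H$ ``must be singular'' and hence identify the fiber of $\PP\Lambda_\Nd\to\Nd$ over $D$ with $\PP \HH^0\big(N,\omega_N(-\sum_i(P_i^0+P_i^\infty)-D)\big)$. Lemma \ref{Lem: H in omegaC is singular iff H in omegaN} does not say this: it says $H$ is singular if and only if $H\in|\omega_C|^-$, i.e.\ if and only if the $s_R$-component of the defining section vanishes; it places no constraint on canonical divisors containing a prescribed non-singular $D$. For general $D\in\Nd$ the unique section of $\omega_C$ vanishing on $D$ has non-zero $s_R$-component, so the fiber is a \emph{non-singular} point of $|\omega_C|$. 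The correct fiber over non-singular $D$ is $\PP\HH^0(C,\omega_C(-D))$, computed in the $(g+1)$-dimensional space $\HH^0(C,\omega_C)=\HH^0(N,\omega_N)\oplus\langle s_R\rangle$, and it is a single point because $\hh^0(C,\omega_C(-D))=\hh^0(C,\BO_C(D))=1$ generically. Your own dimension count exposes the inconsistency: imposing $g$ conditions on the $g$-dimensional space $\HH^0(N,\omega_N)$ gives expected $\hh^0=0$, i.e.\ an \emph{empty} projective fiber --- which, under your identification, would show that $\PP\Lambda_\Nd$ does not even dominate $\Nd$. The later sentence ``this forces $\hh^0(\omega_N(-\sum(P_i^0+P_i^\infty)-D))=0$, so the fiber \dots is a single reduced point'' contradicts your own description of that fiber: the projectivization of the zero space is empty. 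What you actually need at a general non-singular $D$ is $\hh^0(N,\omega_N(-D))=0$ (the homogeneous system for the $\omega_N$-component; the inhomogeneous term is supplied by $s_R$), which does hold and produces exactly one solution with non-zero $s_R$-coefficient. A secondary but real slip: the components $N_i$ are not $\PP^1$'s (those are the $E_i$); each $N_i$ is a double cover of $\PP^1$ of genus $d_i$, which is precisely why $\hh^0(N,\omega_N)=g$ --- a fact you use correctly elsewhere but contradict when you describe $\omega_N(-\sum(P_i^0+P_i^\infty))$ as a sum of line bundles on $\PP^1$'s.

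The surrounding architecture is sound and close to the paper's: the determinantal lower bound $\dim\geq g$ on every component is correct, and excluding components that fail to dominate $\Nd$ by stratifying according to the fiber dimension and invoking the Martens-type bound of Proposition \ref{Prop: Martens Thoerem for singular bielliptic curves} is exactly the right move (and is what the paper does). But this step, too, must be run with the correct linear systems $\hh^0(C,\omega_C(-D))=\hh^0(C,\BO_C(D))$ rather than $\hh^0(N,\omega_N(-\sum(P_i^0+P_i^\infty)-D))$, and one must separately control the locus of \emph{singular} divisors $D$ (those containing some $P_i^0$ or $P_i^\infty$), where $\BO_C(D)$ is not defined; the paper handles that locus by first analyzing the other projection $\PP\Lambda_\Nd\to|\omega_C|$ and showing its positive-dimensional fibers sit over a locus of total preimage dimension $g-1$. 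As written, your proof does not establish the proposition.
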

\begin{corollary}\label{Cor: Class of Lambda Nd}
We have\begin{align*}
    [\PP\Lambda_\Nd]&= c_g( \gamma^\ast \BO_{|\omega_C|}(1) \otimes \bigoplus_{i=1}^n p_i^\ast E_{K,i} ) \\
    &= \sum_{r=0}^g h^r c_{g-r}(\bigoplus_{i=1}^n p_i^\ast E_{K,i} ) \in \HH_{2g}(\Nd\times |\omega_C|,\QQ)\,.
\end{align*}
\end{corollary}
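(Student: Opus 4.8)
The plan is to compute $[\PP\Lambda_\Nd]$ as the top Chern class of the vector bundle cut out by the section defining it, the one substantial ingredient being Proposition \ref{Prop: Lambda Nd is irreducible of dim g}. By its very construction $\PP\Lambda_\Nd$ is the $0$-th degeneracy locus $D_0(\phi)$ of the composite bundle map
\[\phi\colon\gamma^\ast\BO_{|\omega_C|}(-1)\longrightarrow\mathcal{E}\coloneqq\bigoplus_{i=1}^n p_i^\ast E_{K,i}\]
on the smooth variety $X\coloneqq\Nd\times|\omega_C|$. Here $\dim\Nd=\deg\ud=g$ and $\dim|\omega_C|=\hh^0(C,\omega_C)-1=g$, so $\dim X=2g$. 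Since the source of $\phi$ is a line bundle, $D_0(\phi)$ is nothing but the zero scheme of the induced section $\sigma\in\HH^0(X,\mathcal{F})$ of
\[\mathcal{F}\coloneqq\sheafhom\bigl(\gamma^\ast\BO_{|\omega_C|}(-1),\mathcal{E}\bigr)=\gamma^\ast\BO_{|\omega_C|}(1)\otimes\mathcal{E},\]
a vector bundle of rank $\rank\mathcal{E}=\sum_{i=1}^n d_i=g$. Thus the \emph{expected} codimension of $\PP\Lambda_\Nd$ in $X$ is exactly $g$.

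Proposition \ref{Prop: Lambda Nd is irreducible of dim g} supplies the rest: it says $\PP\Lambda_\Nd$ is irreducible of dimension $g$, hence of pure codimension $g=\rank\mathcal{F}$, i.e. of the expected codimension. Having the expected codimension, $Z(\sigma)$ is a local complete intersection inside the smooth $X$, hence Cohen--Macaulay, hence $S_1$; and it is generically reduced, since by the Proposition its projection to $\Nd$ is birational, so $Z(\sigma)$ is isomorphic to the smooth variety $\Nd$ over a dense open set (alternatively one reads this off the set-theoretic description \ref{Equ: set-theoretic description of Lambda Nd}, using that the $B_i^k=\divv s_i^k$ are smooth). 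An $S_1$ generically reduced scheme is reduced, so $\PP\Lambda_\Nd=Z(\sigma)$ already carries its reduced structure, and the standard formula for the class of the zero scheme of a section of a vector bundle in the expected codimension --- equivalently, the Thom--Porteous formula applied to the $0$-th degeneracy locus of $\phi$ --- gives
\[[\PP\Lambda_\Nd]=c_g(\mathcal{F})=c_g\Bigl(\gamma^\ast\BO_{|\omega_C|}(1)\otimes\bigoplus_{i=1}^n p_i^\ast E_{K,i}\Bigr)\cap[X].\]

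The second equality is then the elementary identity for the top Chern class of a tensor product with a line bundle. By the splitting principle, writing $a_1,\dots,a_g$ for the Chern roots of $\mathcal{E}$ and $h=c_1\bigl(\gamma^\ast\BO_{|\omega_C|}(1)\bigr)$, the bundle $\mathcal{F}$ has Chern roots $a_1+h,\dots,a_g+h$, so
\[c_g(\mathcal{F})=\prod_{j=1}^g(a_j+h)=\sum_{r=0}^g h^r\,e_{g-r}(a_1,\dots,a_g)=\sum_{r=0}^g h^r\,c_{g-r}(\mathcal{E}),\]
which is precisely the asserted formula. I do not expect a genuine obstacle in proving the corollary: all the geometry lies in Proposition \ref{Prop: Lambda Nd is irreducible of dim g}, and the only point that needs a word of care is the verification that $Z(\sigma)$ is reduced and of pure expected codimension, so that the Thom--Porteous formula returns the fundamental class of $\PP\Lambda_\Nd$ itself rather than a positive multiple of it.
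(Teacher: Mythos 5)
Your proof is correct and is essentially the argument the paper intends: the paper's own proof is just the one-line ``follows from intersection theory,'' with all the geometric content residing in Proposition \ref{Prop: Lambda Nd is irreducible of dim g}, exactly as you say. Your additional care about reducedness and pure expected codimension (so that Thom--Porteous returns the fundamental class rather than a multiple) is a worthwhile elaboration of a point the paper leaves implicit, but it is not a different route.
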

\begin{proof}[Proof of the Corollary]
    The corollary follows from intersection theory.
\end{proof}
\begin{proof}[Proof of the Proposition]
    The vector bundle on the right in the definition of $\PP\Lambda_\Nd$ is of rank $g$, thus all components of $\PP\Lambda_\Nd$ are of dimension at least $g$. Let $[s]\in |\omega_C|$, and let $s_i=\beta_i^\ast s$. The fiber of $\PP\Lambda_\Nd$ above $[s]$ is
\[ \bigtimes_{i\,|\, s_i \neq 0 } \{D\in N_{i,d_i}\,|\, D\leq \divv s_i \} \times \bigtimes_{i\,|\, s_i=0} N_{i,d_i} \,.\]
Thus $p_2:\PP\Lambda_\Nd \to |\omega_C|$ is fibered above
\[ \bigcup_i \PP\left(\bigoplus_{j\neq i} \HH^0(N_j,\omega_{N_j})\right) \subset |\omega_C| \]
The fiber above this locus is of dimension $g-1$, thus every irreducible component surjects onto $|\omega_C|$ and is of dimension $g$. A general divisor in $|\omega_C|$ is non-singular. The fiber above a non-singular $D\in \Nd$ is $\PP\HH^0(C,\omega_C(-D))$ is of dimension $r(D)=\hh^0(C,D)-1$. Thus by \ref{Prop: Martens Thoerem for singular bielliptic curves}, every irreducible component of $\PP\Lambda_\Nd$ surjects onto $\Nd$. But a general point in $\Nd$ has a unique preimage, thus $\PP\Lambda_\Nd$ is birational to $\Nd$.
\end{proof}
Let $b':\PP\Lambda_{\tNd}\to \PP\Lambda_\Nd$ be the strict transform of $\PP\Lambda_\Nd$ along the blowup $\tNd\times |\omega_C|\to \Nd\times |\omega_C|$. We have the following commutative diagram
\begin{equation}\label{Tikzcd: diagram defining Lambda tNd}
    \begin{tikzcd}
        \tNd \arrow[d,"b"'] & \PP\Lambda_\tNd \arrow[l,"\tilde{p}"] \arrow[rr,"\gamma_\tNd",bend left=15] \arrow[d,"b'"] \arrow[r,phantom,"\subset"] & {\tNd\times|\omega_C|} \arrow[d,"b\times Id"] \arrow[r ]& {|\omega_C|} \arrow[d,phantom, "=" rotate=90] \\
        \Nd & \PP\Lambda_\Nd \arrow[l,"p"] \arrow[rr,"\gamma_{\Nd}"',bend right=15] \arrow[r,phantom, "\subset"] & {\Nd\times |\omega_C|} \arrow[r] & {|\omega_C|}
    \end{tikzcd}
\end{equation}
We have the following:
\begin{proposition}\label{Prop: positive-dimensional fibers of b'}
    The locus above which the fibers of $b'$ are positive-dimensional is the set $(D,H)\in \Lambda_\Nd$ such that $P_i^0+P_j^\infty \leq D\leq \beta^\ast H$ and $P_i^0+P_j^\infty \leq \beta^\ast H-D$ for some $1\leq i,j\leq n$.
\end{proposition}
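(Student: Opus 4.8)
The plan is to analyze the fibers of $b'$ pointwise. Away from $B\times|\omega_C|$ the blow-down $b\times\Id$ — hence $b'$ — is an isomorphism, so positive-dimensional fibers of $b'$ can only lie over points $(D,H)$ with $D\in B$. For such a point both $\prod_k s_k^0$ and $\prod_k s_k^\infty$ vanish at $D$, so the local equation \ref{Equ: equation defining tNd} of $\tNd$ vanishes identically on the fiber of $\PPN\to\Nd$ over $D$; thus $b^{-1}(D)=\PP^1$, and the fiber of $b'$ over $(D,H)$ is a closed subscheme of this $\PP^1$, hence either finite or all of it. So I only need to decide when it is all of $\PP^1$.

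Next I would use that, by construction, $\PP\Lambda_{\tNd}$ is the closure in $\tNd\times|\omega_C|$ of the graph — over $\PP\Lambda_{\Nd}\setminus(B\times|\omega_C|)=\{(D',H'):D'\le\beta^\ast H',\ D'\notin B\}$ — of the rational map $\phi=(\prod_k s_k^\infty:\prod_k s_k^0)$ on $\Nd$ coming from \ref{Equ: equation defining tNd}. Hence the fiber of $b'$ over $(D,H)$ is exactly the set of limits $\lim_{t\to 0}\phi(D_t)$ along analytic arcs $(D_t,H_t)\to(D,H)$ with $D_t\le\beta^\ast H_t$ and $D_t\notin B$. Since $D\in B$ means $D$ contains a node-preimage, $D\le\beta^\ast H$ forces $H$ to be singular, so $\beta^\ast H\ge\sum_k(P_k^0+P_k^\infty)$ by \ref{Lem: H in omegaC is singular iff H in omegaN}. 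Writing $m_k^0,\delta_k^0$ (resp. $m_k^\infty,\delta_k^\infty$) for the multiplicities of $\beta^\ast H$ and of $D$ at $P_k^0$ (resp. $P_k^\infty$), and $I_0=\{k:\delta_k^0>0\}$, $I_\infty=\{k:\delta_k^\infty>0\}$ (both nonempty), an elementary check rewrites the condition in the statement as: there exist $i\in I_0$ with $m_i^0>\delta_i^0$ and $j\in I_\infty$ with $m_j^\infty>\delta_j^\infty$.

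The heart of the argument is then a local study near the nodes. The key input is that the product of all the roots of $\beta^\ast H_t$ clustering at a point $P_k^0$ is, up to a unit with nonzero limit, the residue of the section cutting out $H_t$ at $P_k^0$; since this residue vanishes on $\HH^0(N,\omega_N)$ while $s_R$ has nonzero residue at every node, all these residues are proportional to a single quantity $\lambda(t)\to 0$ (the $s_R$-component of the section), which ties the root-products at the various nodes together. From this: (i) if $m_i^0=\delta_i^0$ for every $i\in I_0$ — one of the two ways the condition can fail — then near each $P_i^0$ the divisor $D_t$ must absorb all of $\beta^\ast H_t$, so $\prod_k s_k^0(D_t)$ has the forced asymptotics $(\mathrm{unit})\,\lambda(t)^{|I_0|}$, and matching this against $\prod_k s_k^\infty(D_t)$ pins $\lim\phi(D_t)$ to one point of $\PP^1$; so the fiber is finite, and symmetrically if $m_j^\infty=\delta_j^\infty$ for all $j\in I_\infty$. (ii) If instead $m_i^0>\delta_i^0$ for some $i\in I_0$ and $m_j^\infty>\delta_j^\infty$ for some $j\in I_\infty$, then near $P_i^0$ (resp. $P_j^\infty$) a root of $\beta^\ast H_t$ is left free by $D_t$; I would deform $H$ in $|\omega_C|$ so that the roots clustering at $P_i^0$ (resp. $P_j^\infty$) split into the $\delta_i^0$ roots taken by $D_t$ and a complementary nonempty family (carrying the residue), with prescribed leading rates and coefficients, which lets $\phi(D_t)=\prod_k s_k^\infty(D_t)/\prod_k s_k^0(D_t)$ tend to any prescribed point of $\PP^1$. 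Combining (i) and (ii) gives the proposition.

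The main obstacle will be the deformation step in (ii): $\HH^0(C,\omega_C)$ is finite-dimensional, so the germs of $\beta^\ast H_t$ at the nodes cannot be prescribed freely. One has to check, via Riemann–Roch (using $g\ge 3$, and in the cases $\ud=(g),(1,g-1)$ in fact $g\ge 4$), that $\HH^0(C,\omega_C)$ surjects onto the jets needed at $P_i^0$ and $P_j^\infty$ simultaneously; allow a ramified reparametrization of $t$ so that fractional root-rates become analytic; and organize the one-parameter family so that $D_t\le\beta^\ast H_t$, $D_t\notin B$ and $(D_t,H_t)\to(D,H)$ hold along the whole arc — separating off the (more degenerate) case of arcs with $H_t\in|\omega_C|^-$. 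The bookkeeping of which roots of $\beta^\ast H_t$ enter $D_t$, constrained by the residue relations linking all the root-products, is the technical core.
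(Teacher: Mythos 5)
Your global framework is sound and, for one of the two directions, matches the paper: the fiber of $b'$ over $(D,H)$ is a closed subscheme of the exceptional $\PP^1$ over $D\in B$, equal to the set of limits of $\bigl(\prod_k s_k^\infty(D_t):\prod_k s_k^0(D_t)\bigr)$ along arcs in $\PP\Lambda_\Nd\setminus (B\times|\omega_C|)$; your reformulation of the condition in terms of $m_i^0>\delta_i^0$ and $m_j^\infty>\delta_j^\infty$ is correct; and your step (ii) (producing arcs with arbitrary prescribed limit by deforming the jets of $H_t$ at one $P_i^0$ and one $P_j^\infty$) is exactly the paper's argument for the positive-dimensionality direction, with the technical caveats honestly identified.

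The gap is in step (i). From $m_i^0=\delta_i^0$ for all $i\in I_0$ you correctly get $\prod_k s_k^0(D_t)=(\mathrm{unit})\,\lambda(t)^{|I_0|}$, but the assertion that ``matching this against $\prod_k s_k^\infty(D_t)$ pins $\lim\phi(D_t)$'' is not justified and does not follow from the residue relations alone. In case (i) nothing prevents $m_j^\infty>\delta_j^\infty$ for several $j\in I_\infty$; then $s_j^\infty(D_t)$ is the product of a \emph{sub}-collection of $\delta_j^\infty$ roots out of the $m_j^\infty$ roots clustering at $P_j^\infty$, and only the full product of all $m_j^\infty$ roots is tied to $\lambda(t)$. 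Writing $s_j^\infty(D_t)=u_j\lambda/q_j$ with $q_j$ the product of the complementary roots, one gets $\phi(D_t)=(\mathrm{unit})\,\lambda^{|I_\infty|-|I_0|}/\prod_j q_j$, where each $q_j$ can be arranged (by the very jet-deformations you use in step (ii)) to decay at essentially any rate between $\lambda$ and $1$; a priori the limit is therefore not pinned, and ruling this out requires a genuine additional input (e.g.\ the constraint that $\rho(H)\restr{N_j}$ is a pullback from $\PP^1$, so the $m_j^k$ are odd, or a degree count) which you do not supply. The paper avoids this entirely by an algebraic argument: under the hypothesis of case (i), a point $(D',H')\in\PP\Lambda_\Nd$ near $(D,H)$ lies in $B'=\PP\Lambda_\Nd\cap(B\times|\omega_C|)$ \emph{as soon as} $P_{j_0}^\infty\le D'$ (because $P_{j_0}^\infty\le D'\le\beta^\ast H'$ forces $H'$ singular, hence $P_{i_0}^0\le\beta^\ast H'$, and the equality $m_{i_0}^0=\delta_{i_0}^0$ then forces $P_{i_0}^0\le D'$), so $B'$ is locally the Cartier divisor $\{s_{j_0}^\infty=0\}$ on $\PP\Lambda_\Nd$ and the blowup $b'$ is a local isomorphism there — no asymptotic analysis of arcs is needed. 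You should either adopt that local principality argument for the finiteness direction or supply the missing control on the sub-products $q_j$; as written, step (i) does not close.
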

\begin{proof}
Recall that $b'$ is the blowup of $B'\coloneqq \PP\Lambda_{\Nd}\cap(B\times |\omega_C|)$ where 
\[ B=\{D\in \Nd\,|\, P_i^0+P_j^\infty \leq D \,,\, \text{for some $1\leq i,j\leq n$}\}\,.\]
Let $(D,H)\in B'$, then $H$ must be singular and by \ref{Lem: H in omegaC is singular iff H in omegaN} we have $H\in |\omega_C|^-$. Let $\tilde{H}=\rho(H)\in |\omega_N|$. Suppose first that for all $i$ such that $P_i^0\leq D$, the multiplicity of $P_i^0$ in $D$ and $\beta^\ast H$ is the same. Fix $i_0,j_0$ such that $P_{i_0}^0+P_{j_0}^\infty \leq D$. Let $X_{P_{j_0}^\infty}\subset \Nd$ be the set of divisors containing $P_{j_0}^\infty$. Then locally near $(D,H)$ we have
\[ B'=\Lambda_\Nd \cap (X_{P_{j_0}^\infty} \times |\omega_C|)\,.\]
Indeed, locally near $(D,H)$ we have $\Lambda_\Nd \cap (X_{P_{j_0}^\infty} \times |\omega_C|) \subset \Nd\times |\omega_C|^-$ thus for any $(D',H')\in \Lambda_\Nd \cap (X_{P_{j_0}^\infty} \times |\omega_C|) $ near $(D,H)$, we have
\[ P_i^0\leq H' \,, \] 
$D'$ must contain $P_{i_0}^0$ and thus $(D',H')\in B'$. Thus $B'$ is locally a Cartier divisor and $b'$ is a local isomorphism. The same reasoning applies if for all $1\leq j \leq n$, the multiplicity of $P_j^\infty$ in $D$ and $\beta^\ast H$ is the same. \\
Conversely, assume that $P_i^0+P_j^\infty \leq D\leq \beta^\ast H$ and $P_i^0+P_j^\infty \leq \beta^\ast H-D$ for some $1\leq i,j\leq n$. Let $a^0$ (resp. $a^\infty$) be the multiplicity of $P_i^0$ (resp. $P_i^\infty$) in $D$. For any local parametrization $P_i^0(t),P_j^\infty(t)$ we can find a parametrization $H(t)\in |\omega_C|^-$ such that $a^0 P_i^0(t)+b^0P_j^\infty(t)\leq \beta^\ast H(t)$, and thus a family $(D(t),H(t))\in \Lambda_\Nd$ such that $a^0P_i^0(t)+a^\infty P_j^\infty \leq D(t)$. Thus the strict transform $\Lambda_\tNd$ contains the whole fiber of the blowup $b$ at $D\in \Nd$.
\end{proof}
We then have:
\begin{proposition}\label{Prop: Fibers of Lambda Nd to omega N} The projection
\[\gamma_{\tNd}: \PP\Lambda_\tNd \to |\omega_C| \]
is finite above $|\omega_C|\setminus |\omega_C|^-$. Let $H\in |\omega_C|^-$, assume $\rho(H)=\divv s$ with $s=s_1+\cdots+s_n\in \oplus_i\HH^0(N_i,\omega_{N_i})$. The fiber above $H$ is positive-dimensional in only the two following cases:
\begin{enumerate}
    \item $s_i=0$ for some $1\leq i\leq n$. Then the fiber is
    \[
    (b')^{-1}\left( \prod_{i\,|\, s_i\neq 0} \{ D\in N_{i,d_i}\,|\, D\leq P_i^0+P_i^\infty+\divv s_i \} \times \prod_{i\,, s_i=0} N_{i,d_i}\times\{H\}\right)\,. \]
    \item $P_i^0+P_j^\infty\leq \divv s$ for some $1\leq i,j\leq n$. For all such $i,j$, and for all $D\in \Nd$ such that
    \begin{align*}
        P_i^0+P_j^\infty \leq D\leq \beta^\ast H -P_i^0-P_j^\infty \,,
    \end{align*}
    $\PPN\restr{D}\times\{H\}\subset \Lambda_\tNd$ is in the fiber above $H$.
\end{enumerate}
\end{proposition}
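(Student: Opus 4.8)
The plan is to combine the set-theoretic description of $\PP\Lambda_\Nd$ from \eqref{Equ: set-theoretic description of Lambda Nd} with the analysis of the blowup $b'$ carried out in \ref{Prop: positive-dimensional fibers of b'}. Fix $H \in |\omega_C|$ and consider the fiber $\gamma_\Nd^{-1}(H) \subset \PP\Lambda_\Nd$. By \eqref{Equ: set-theoretic description of Lambda Nd} this fiber is $\{D \in \Nd \mid D \leq \beta^\ast H\}$, which is a product $\prod_i \{D_i \in N_{i,d_i} \mid D_i \leq (\beta^\ast H)|_{N_i}\}$ whenever $s_i := \beta_i^\ast s \neq 0$ for all $i$; here it is a single point precisely because each $(\beta^\ast H)|_{N_i}$ is a divisor of degree exactly $2d_i$ on $N_i$ and $\dim N_{i,d_i} = d_i$, so the condition $D_i \leq (\beta^\ast H)|_{N_i}$ pins down finitely many $D_i$ — in fact a unique one when $(\beta^\ast H)|_{N_i}$ is reduced. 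So away from the locus where some $s_i = 0$, $\gamma_\Nd$ has finite fibers. By \ref{Lem: H in omegaC is singular iff H in omegaN}, $s_i = 0$ for some $i$ can only happen when $H \in |\omega_C|^-$ (if $H \notin |\omega_C|^-$ then $H$ is non-singular and $\beta^\ast H - \sum(P_i^0 + P_i^\infty) = \rho(H)$ is supported with the right degrees on every component), which gives case (1) at the level of $\PP\Lambda_\Nd$; pulling back along $b'$ gives the stated formula since $\PP\Lambda_\tNd$ is the strict transform.

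Next I would address whether $b'$ introduces extra positive-dimensional fibers beyond those coming from case (1). This is exactly the content of \ref{Prop: positive-dimensional fibers of b'}: the strict transform $b'$ fails to be quasi-finite over $(D,H) \in \PP\Lambda_\Nd$ precisely when $P_i^0 + P_j^\infty \leq D \leq \beta^\ast H$ and $P_i^0 + P_j^\infty \leq \beta^\ast H - D$ for some $i,j$. When this happens, $H$ must again be singular (so $H \in |\omega_C|^-$ by \ref{Lem: H in omegaC is singular iff H in omegaN}, and $P_i^0 + P_j^\infty \leq \beta^\ast H$ forces $P_i^0 + P_j^\infty \leq \divv s$ after subtracting $\sum (P_k^0 + P_k^\infty)$), and the fiber of $b'$ over each such $(D,H)$ is the full fiber $\PPN|_D \cong \PP^1$ of the blowup $b$, as shown there. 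Ranging over all $D$ with $P_i^0 + P_j^\infty \leq D \leq \beta^\ast H - P_i^0 - P_j^\infty$ gives exactly the locus described in case (2). Conversely, if $H \in |\omega_C|^-$ has all $s_i \neq 0$ and admits no such pair $(i,j)$, then the fiber over $H$ in $\PP\Lambda_\Nd$ is finite (by the first paragraph) and $b'$ is a local isomorphism over it (by \ref{Prop: positive-dimensional fibers of b'}), so $\gamma_\tNd^{-1}(H)$ is finite. This proves the finiteness statement over $|\omega_C| \setminus |\omega_C|^-$ and the exhaustive list of the two positive-dimensional cases.

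The main point requiring care is the bookkeeping when cases (1) and (2) overlap or when several components $s_i$ vanish simultaneously, and more importantly the verification that in case (1) the fiber is genuinely $\prod_{i \mid s_i \neq 0}\{D \leq P_i^0 + P_i^\infty + \divv s_i\}$ rather than something smaller: one must note that for $i$ with $s_i \neq 0$, the relevant component of $\beta^\ast H$ is $P_i^0 + P_i^\infty + \divv s_i$ (the two marked points are forced to appear by \ref{Lem: H in omegaC is singular iff H in omegaN}), and then $\{D_i \in N_{i,d_i} \mid D_i \leq P_i^0 + P_i^\infty + \divv s_i\}$ can itself be positive-dimensional only if $\divv s_i$ is non-reduced or involves $P_i^0, P_i^\infty$ with higher multiplicity — but in all cases it is the honest fiber of $\gamma_\Nd$, and applying $(b')^{-1}$ is then just the definition of the strict transform restricted to a fiber. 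I expect the hardest part to be presenting this cleanly so that the two mechanisms producing positive-dimensional fibers — a component being collapsed versus a blown-up center being swept out — are visibly disjoint in their origin even though they can coexist for a given $H$; everything else is a direct reading of \eqref{Equ: set-theoretic description of Lambda Nd}, \ref{Lem: H in omegaC is singular iff H in omegaN}, and \ref{Prop: positive-dimensional fibers of b'}.
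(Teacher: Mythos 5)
Your proposal is correct and follows essentially the same route as the paper: factor $\gamma_{\tNd}$ as $\gamma_{\Nd}\circ b'$, read off the positive-dimensional fibers of $\gamma_{\Nd}$ (which occur exactly when some $s_i=0$, giving case (1)) from the set-theoretic description of $\PP\Lambda_{\Nd}$ together with \ref{Lem: H in omegaC is singular iff H in omegaN}, and attribute case (2) to the positive-dimensional fibers of $b'$ via \ref{Prop: positive-dimensional fibers of b'}. The only quibble is your aside that the fiber over a reduced $(\beta^\ast H)\restr{N_i}$ is ``a single point'' --- it is the finite set of degree-$d_i$ subdivisors, of cardinality $\binom{2d_i}{d_i}$ --- but this does not affect the finiteness conclusion.
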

\begin{proof}
The projection decomposes as
\[ \PP\Lambda_\tNd \overset{b'}{\longrightarrow}\, \PP\Lambda_\Nd \overset{\gamma_\Nd}{\longrightarrow} |\omega_C| \,. \] 
The first case are the positive-dimensional fibers of $\gamma_\Nd$ and follows from the proof of \ref{Prop: Lambda Nd is irreducible of dim g}. The second case corresponds to the positive-dimensional fibers of $b'$ and follows from \ref{Prop: positive-dimensional fibers of b'}.
\end{proof}
Consider the inclusion $\Pic^\ud(C)\times |\omega_C|\subset \PPP\times |\omega_C|$. Let
\[ {\PP\Lambda_{\overline{\Theta}}}\subset \PPP\times |\omega_C| \]
denote the closure of $\PP\Lambda_\Theta$.
\begin{proposition}\label{Prop: q ast Lambda theta = Lambda Nd}
 With the above notations, we have
   \[ {\PP\Lambda_{\overline{\Theta}}}=(\alpha\times Id)_\ast(\PP\Lambda_\tNd)\,.\]

\end{proposition}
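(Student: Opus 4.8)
The plan is to relate the conormal variety $\PP\Lambda_{\tNd}$ upstairs to the conormal variety of $\overline\Theta$ downstairs via the Abel-Jacobi map $\alpha:\tNd\to\overline\Theta$, exploiting the fact that $\alpha$ is birational (by the Martens-type result \ref{Prop: Martens Thoerem for singular bielliptic curves} and \ref{Prop: Lambda Nd is irreducible of dim g}, a general divisor has a unique preimage) and that both sides are irreducible of dimension $g$. First I would check the set-theoretic inclusion $(\alpha\times\Id)(\PP\Lambda_{\tNd})\subseteq\PP\Lambda_{\overline\Theta}$ over the smooth locus: if $\tilde D\in\tNd$ lies above a non-singular divisor $D$ and $(D,H)\in\PP\Lambda_{\Nd}$, i.e. $D\leq\beta^\ast H$ by \ref{Equ: set-theoretic description of Lambda Nd}, then $H$ vanishes on the effective divisor $D$ on $C$, so $[s_H]\in\PP\HH^0(C,\omega_C(-D))$, which is exactly the statement that $H$ is a hyperplane tangent to $\Theta$ at $L=\alpha(\tilde D)=\alpha_N(D)$ — this is the usual computation that the Abel-Jacobi differential identifies $T_L\Theta$ with the divisors in $|\omega_C|$ through $D$ (the reducible analogue of \cite[p.~360]{arbarello}). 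Since over the open dense locus where $D$ is non-singular and $\hh^0(C,D)=1$ the map $\alpha$ is an isomorphism and the fiber of $\PP\Lambda_{\tNd}$ is the single point $([s_H],\tilde D)$, the induced map $\PP\Lambda_{\tNd}\dashrightarrow\PP\Lambda_{\overline\Theta}$ is generically one-to-one.

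The second step is a dimension/irreducibility count. By \ref{Prop: Lambda Nd is irreducible of dim g}, $\PP\Lambda_{\Nd}$ is irreducible of dimension $g$, and blowing up along $B\times|\omega_C|$ and taking the strict transform preserves irreducibility and dimension, so $\PP\Lambda_{\tNd}$ is irreducible of dimension $g$. On the other hand $\PP\Lambda_{\overline\Theta}$, being the closure of the conormal variety of the irreducible $(g-1)$-dimensional divisor $\Theta\subset JC$ (irreducible because $\Theta$ is, which again follows from $\alpha$ being a surjection from the irreducible $\tNd$), is irreducible of dimension $g$ as well. A dominant morphism between irreducible varieties of the same dimension that is generically injective is birational, and since both are projective over $\Pic^\ud(N)$ (for $\PP\Lambda_{\overline\Theta}$ one uses the compactification $\PPP$) the pushforward of the fundamental cycle $(\alpha\times\Id)_\ast[\PP\Lambda_{\tNd}]$ equals $[\PP\Lambda_{\overline\Theta}]$ as cycles. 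To upgrade this to an equality of the varieties themselves — which is what the statement asserts — I would argue that $(\alpha\times\Id)(\PP\Lambda_{\tNd})$ is closed (it is the image of a projective-over-$\Pic^\ud(N)$ scheme, and $\PP\Lambda_{\overline\Theta}$ is closed in $\PPP\times|\omega_C|$) and contains the dense subset $\PP\Lambda_\Theta$; conversely every point of $\PP\Lambda_{\overline\Theta}$ is a limit of smooth-locus points, each of which lifts through the proper map $\alpha\times\Id$, so by properness the limit lifts too, giving the reverse inclusion.

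The main obstacle, I expect, is the behaviour over the singular locus of $\Theta$ and over the non-reduced or exceptional strata catalogued in \ref{Prop: Singular Locus of theta for bielliptic Curves} — in particular making sure that the lift of a tangent hyperplane at an exceptional singularity (where $\mathrm{mult}_L\Theta>\hh^0(C,L)$, cf. \ref{Prop: Riemann Singularity Theorem fro reducible curves}) or at a point with $\hh^0\geq 2$ really lands in $\PP\Lambda_{\tNd}$ and not in some spurious extra component. This is where one genuinely needs the blowup: the positive-dimensional fibers of $b'$ described in \ref{Prop: positive-dimensional fibers of b'} and \ref{Prop: Fibers of Lambda Nd to omega N} are exactly engineered so that the strict transform $\PP\Lambda_{\tNd}$ has the right fibers over these bad loci. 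I would handle this by a local computation at a point $\tilde D\in\tNd$ above a singular divisor using the explicit local model $V(x_1\cdots x_k - x_{k+1}\cdots x_{k+l})$ from \ref{Lem: singularities of tNd}, checking directly that the conormal directions to this hypersurface singularity map onto the tangent hyperplanes to $\overline\Theta$ at $\alpha(\tilde D)$, and invoking properness of $\alpha\times\Id$ to rule out that $\PP\Lambda_{\overline\Theta}$ acquires points outside the image. Since both sides are reduced (conormal varieties are reduced by definition, and $\PP\Lambda_{\tNd}$ is a strict transform of a reduced scheme), equality of the underlying sets together with the cycle-level equality above yields the scheme-theoretic identity claimed.
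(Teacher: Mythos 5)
Your argument is correct and is essentially the paper's own proof, which simply observes that both sides are reduced, irreducible, and agree on a dense open subset (the locus of non-singular divisors $D$ with $\hh^0(C,\BO_C(D))=1$, where $\alpha$ is an isomorphism and the conormal fibre is the single tangent hyperplane). The extended discussion of the singular strata and the local models is superfluous: once irreducibility of both sides and agreement on a dense open set are in hand, equality of the closed sets (and of the cycles, since the map is generically injective) follows with no further analysis over the bad loci.
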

\begin{proof}
Both are reduced, irreducible and agree on an open dense subset.
\end{proof}

We have the following:
\begin{proposition}\label{Prop: description of tNd}
    Suppose $\ud=(g)$ or $\ud=(1,g-1)$. Then above the locus of line bundles $\Pic^\ud(C) \subset \PPP$, $\tNd$ parameterizes line bundles together with a ``divisor"
    \[ \tNd\restr{\Pic^\ud(C)}\simeq \{(L,[s])\,|\, L\in \Pic^\ud(C)\,,  [s]\in \PP \HH^0(C,L) \} \,. \]
\end{proposition}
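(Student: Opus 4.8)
The claim is a statement about the fiber of $b:\tNd\to\Nd$ restricted to the open locus where the curve degeneration is trivial, i.e.\ over $\Pic^\ud(C)\subset\PPP$. The natural strategy is to unravel the explicit local description of $\tNd$ from \ref{Diag: Def of tNd with more details} together with the identification of the $\PP^1$-bundle $\PPP=\PP(\tau_{\eta_0}\mathscr L\oplus\tau_{\eta_\infty}\mathscr L)$, and to match the data $(L,[s])$ with $s\in\HH^0(C,L)$ against a point $(D,\lambda:\mu)$ of $\PPN$ lying over a line bundle. Recall that a point of $\PPP$ over $L_N\in\Pic^\ud(N)$ is either a point of $L^\eta$ (which together with $L_N$ gives a line bundle $L$ on $C$) or the point at infinity of the fiber. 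The case $\ud=(g)$ or $\ud=(1,g-1)$ is special because then $\tNd$ is smooth over line bundles and the blowup locus $B=\bigcup_{i,j}B_i^0\cap B_j^\infty$ meets the preimage of $\Pic^\ud(C)$ in a way that makes $\alpha$ an isomorphism onto its image there.

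First I would recall that over $\Pic^\ud(C)$ the exact sequence $0\to\CC^\ast\to\Pic^\ud(C)\xrightarrow{\beta^\ast}\Pic^\ud(N)\to 0$ holds, and that choosing $L$ over $L_N=\beta^\ast L$ amounts to a gluing parameter in $\CC^\ast$, equivalently a point of the fiber $L^\eta_{L_N}\setminus\{0\}$ of the line bundle $L^\eta=\tau_{\eta_0}\mathscr L\otimes\tau_{\eta_\infty}\mathscr L^{-1}$. Pulling back along $\alpha_N$, the two sections $s^0,s^\infty$ of $\alpha_N^\ast\tau_{\eta_0}\mathscr L=\BO(B^0)$ and $\alpha_N^\ast\tau_{\eta_\infty}\mathscr L=\BO(B^\infty)$ give, at a divisor $D\in\Nd$, the ratio $(\lambda:\mu)=(s^0(D):s^\infty(D))$. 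I would then identify, for $D$ with $\beta_\ast$-image not hitting a node in a forbidden way, the data $(D,\lambda:\mu)\in\PPN$ with a pair $(L,[s])$: the multidegree-$\ud$ divisor $D$ on $N$ together with the ratio $(\lambda:\mu)$ determines how the sections on the components $N_i$ vanishing along $D\restr{N_i}$ glue across the nodes $Q_i$, hence a section $s$ of a line bundle $L$ on $C$ with $\beta^\ast L=\BO_N(D)$ up to scalar; conversely a section $s\in\HH^0(C,L)$ restricts to sections on each $N_i$ whose divisors of zeros assemble to a $D\in\Nd$, and reading off the gluing recovers $(\lambda:\mu)$. The fact that $\ud=(g)$ or $(1,g-1)$ is used here via \ref{Prop: Martens Thoerem for singular bielliptic curves} / the proof of \ref{Prop: Lambda Nd is irreducible of dim g}: for these multidegrees $\hh^0(C,L)\le\ ?$ is small enough (generically $1$) that the correspondence is an isomorphism of schemes rather than merely set-theoretic, and the blowup $b$ is an isomorphism over the line-bundle locus because $B\cap(\Pic^\ud(C)$-preimage$)$ is already a Cartier divisor there — this is exactly the content of \ref{Lem: singularities of tNd} with $k+l\le 1$ over line bundles in these cases.

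The key steps, in order: (1) restrict diagram \ref{Diag: Def of tNd with more details} to the open subset $\PPP\setminus(\text{section at infinity})\supset\Pic^\ud(C)$ and its preimage in $\PPN$ and $\tNd$; (2) use the local equation \ref{Equ: equation defining tNd}, $\lambda s_1^0\cdots s_n^0=\mu s_1^\infty\cdots s_n^\infty$, to see that over line bundles $\tNd$ is cut out by one equation whose solutions are parametrized by $(D,[s])$ as above; (3) produce the bijective morphism $(L,[s])\mapsto(D,\lambda:\mu)$ and its inverse using that $2P_i^0\sim 2P_i^\infty\sim\delta_i$ and the eigenspace decomposition of $\HH^0(C,\omega_C)$ — more directly, using that a section of $L$ on $C$ is precisely a compatible tuple of sections on the $N_i$, which is the content of the exact sequence $0\to\HH^0(C,L)\to\HH^0(N,\beta^\ast L)\xrightarrow{\psi}\CC^n$ from the proof of \ref{Prop: Martens Thoerem for singular bielliptic curves}; (4) check this is an isomorphism of schemes, not just a bijection, by a tangent-space / flatness count, invoking smoothness of $\tNd$ over line bundles from \ref{Lem: singularities of tNd}.

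**Main obstacle.** The delicate point is step (3)–(4): matching the \emph{scheme} structures. Set-theoretically the correspondence between $\tNd\restr{\Pic^\ud(C)}$ and $\{(L,[s])\}$ is easy, but to get an isomorphism one must control base points at the nodes $P_i^0,P_i^\infty$ — the blowup $b$ was introduced precisely to separate $B^0$ and $B^\infty$, and over line bundles one must verify that in the cases $\ud=(g),(1,g-1)$ the blown-up locus contributes nothing new (no exceptional fiber of positive dimension sits over $\Pic^\ud(C)$), so that $b$ and hence $\alpha$ is an isomorphism there. This is where the hypothesis on $\ud$ is essential: for general $\ud$ one genuinely gets a blowup of $\Sym$ rather than a space of ``line bundles with a divisor,'' as the introduction emphasizes. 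Concretely I expect the argument to reduce to checking that for $L\in\Pic^\ud(C)$ with these multidegrees and a section $s$, at most one of the conditions $P_i^0\le\divv(s\restr{N_i})$, $P_i^\infty\le\divv(s\restr{N_i})$ can hold across the relevant nodes, which follows from $\deg R_i/2=d_i$ and the degree bookkeeping $\sum d_i=g$ together with $d_1=1$ in the second case.
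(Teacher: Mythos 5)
Your framework for the generic case---restricting diagram \ref{Diag: Def of tNd with more details} over $\Pic^\ud(C)$, using the local equation \ref{Equ: equation defining tNd}, and matching the fiber coordinate $(\lambda:\mu)$ with the gluing datum of a section through $0\to\HH^0(C,L)\to\HH^0(N,\beta^\ast L)\to\CC^n$---is sound and agrees with the paper over non-singular divisors. The gap is in how you dispose of what you yourself flag as the main obstacle: both claims you invoke there are false. First, $b$ is \emph{not} an isomorphism over the line-bundle locus. For $\ud=(g)$, every $D\geq P^0+P^\infty$ lies in the blowup center $B$, the equation $\lambda s^0-\mu s^\infty$ vanishes identically on the fiber over such a $D$, so the exceptional fiber is the whole $\PP^1$ and meets $\alpha^{-1}(\Pic^\ud(C))=\{\lambda\mu\neq 0\}$ in a $\CC^\ast$; similarly for $\ud=(1,g-1)$ when $P_2^0+P_2^\infty\leq D_2$. (Nor is $\alpha$ injective over $\Theta$: its fibers over $W^1_\ud(C)$ are the positive-dimensional $\PP\HH^0(C,L)$, which is consistent with the statement but not with ``$\alpha$ is an isomorphism onto its image''.) Second, your reduction ``at most one of $P_i^0\leq\divv(s\restr{N_i})$, $P_i^\infty\leq\divv(s\restr{N_i})$ can hold'' fails on any component with $d_i\geq 2$: for $\ud=(g)$ the section of $L=\BO_C((\lambda,1,1)_Q+D')$ pulls back to one vanishing at both $P^0$ and $P^\infty$.

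The proposition's actual content lives precisely in this case. When $\beta^\ast\divv(s)$ contains both branches of a node, the descent condition $s(P_i^0)=c\,s(P_{i-1}^\infty)$ is vacuous, so one and the same divisor on $N$ carries a descending section for \emph{every} gluing $c\in\CC^\ast$; this $\CC^\ast$ of line bundles is exactly what the surviving part of the exceptional fiber parameterizes, and verifying this matching is what the paper's proof spends its effort on (via the Cartier divisors $(\lambda,a,b)_Q+D'$ supported at the node for $\ud=(g)$, and via sections vanishing identically on $N_1$ when $P_2^0+P_2^\infty\leq D_2$ for $\ud=(1,g-1)$). Your argument, taken at face value, concludes that these configurations do not arise and therefore never establishes the bijection on the locus where it is nontrivial. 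This is also where the hypothesis on $\ud$ genuinely enters: for $n\geq 3$ one can have $P_i^0+P_j^\infty\leq D$ with the two points lying on branches of \emph{different} nodes, in which case the descent condition is not vacuous, the exceptional $\CC^\ast$ corresponds to no section at all, and the statement fails---so a correct proof must engage with the exceptional fibers rather than argue them away.
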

\begin{proof}
    Recall from \ref{Diag: Def of tNd with more details} the following commutative diagram
    \begin{center} \begin{tikzcd}
    \tilde{N}_{\underline{d}} \arrow[dr ,"b"'] \arrow[rr, bend left=20, "\alpha"] \arrow[r,hook,"i_\tNd"'] 
         & \PPN  \arrow[r] \arrow[d,"q_N"] & \PPP \arrow[d,"q"]\\  
         & N_{\underline{d}}  \arrow[r,"\alpha_N"'] & \Pic^\ud(N)
    \end{tikzcd}\,. 
    \end{center}
    Given a point in $ x\in \tNd\restr{\Pic^\ud(C) }$, we thus have a line bundle $L_x\coloneqq \alpha(x)\in \Pic^\ud(C)$ and a divisor $D_x\coloneqq b(x)\in N_\ud$. If $D_x$ is non-singular it corresponds immediately to a unique Cartier divisor. We now assume $D_x$ to be singular. Suppose first that $\ud=(g)$. A Cartier divisor on $C$ is given by
    \[D=(\lambda,a,b)_Q+D'\]
    where $D'$ is a non-singular divisor on $C$ and $(\lambda,a,b)_Q\in \CC^\ast\times \ZZ\times \ZZ$ is a Cartier divisor supported on the unique singular point $Q\in C$. $a,b$ and $D'$ are determined uniquely by $D_x$ and for a given $a,b$ and $D'$ there is a unique $\lambda\in \CC^\ast$ such that $\BO_C(D)=L_x$. \\
    Suppose $\ud=(1,g-1)$. We have $D_x=(D_1,D_2)\in N_1\times N_{2,g-1}$. Suppose first that 
    \[ P_2^0+P_2^\infty \leq D_2 \,. \]
    Since $D_1$ is of degree $1$, it can't contain both $P_1^0$ and $P_1^\infty$. Thus any section of $L_x$ vanishing at $D_2$ must vanish on $N_1$. Thus up to scalar, there is a unique section $s\in\HH^0(C,L_x)$ vanishing at $D_2\cup N_1$. \\
    We now assume $P_2^0+P_2^\infty\nleq D_2$. Assume for instance $D_2=a\cdot P_2^0+D'_2$ with $D'_2$ non-singular. By assumption $\alpha(x)=L_x$ is a line bundle. This implies $D_1=P_1^\infty$ (this comes from the description of $\tNd$ as a blow-up). For the same reason as in the irreducible case, there is now a unique $\lambda\in \CC^\ast$ such that
    \[ D=(\lambda,1,a)_{Q_1}+D'_2\]
    corresponds to $L_x$, where $Q_1$ is the singular point corresponding to $P_2^0$ and $P_1^\infty$. \par 
    Finally given $L\in \Pic^\ud(C)$ and $D\in \PP\HH^0(C,L)$, then $(L,\beta^\ast D)\in \PPN$ is in $\tNd$ and this gives the inverse of the map constructed above.
\end{proof}
\begin{proposition}\label{Prop: description of Lambda tNd}
    Suppose $\ud=(g)$ or $\ud=(1,g-1)$. Let 
    \[\PP\Lambda_\tNd^\ast \coloneqq \left\{ (L,[s_1],[s_2])\,\Big|\,(L,[s_1])\in \tNd\restr{\Pic^\ud(C)}\,, [s_2]\in \PP \HH^0(C,\omega_C\otimes L^{-1}) \right\}\,. \]
    The map
    \begin{align*}
        \PP\Lambda_\tNd^\ast &\hookrightarrow \tNd \times |\omega_C| \\
        (L,[s_1],[s_2]) &\mapsto (L,[s_1]),[s_1\otimes s_2]
    \end{align*}
    identifies $\PP\Lambda_\tNd^\ast$ with $\PP\Lambda_\tNd\restr{\Pic^\ud(C)}$.
\end{proposition}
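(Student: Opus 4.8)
The plan is to realize the displayed assignment as a morphism $f$, to check that it factors through $\PP\Lambda_\tNd\restr{\Pic^\ud(C)}$, and then to prove it is bijective. By \ref{Prop: description of tNd} there is over $\tNd\restr{\Pic^\ud(C)}$ a universal line bundle $\mathcal{L}$ together with a universal section $\mathcal{S}_1$; then $\PP\Lambda_\tNd^\ast$ is the projectivization of the relative $\HH^0(\omega_C\otimes\mathcal{L}^{-1})$ and $f$ is induced by multiplication by $\mathcal{S}_1$, so it is a morphism to $\tNd\restr{\Pic^\ud(C)}\times|\omega_C|$. The first point to settle is well-definedness, i.e.\ that $s_1\otimes s_2\neq 0$. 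When $\ud=(g)$ this is automatic since $C$ is irreducible. When $\ud=(1,g-1)$ one computes multidegrees: $\deg\omega_C|_{N_1}=2$, so both $L$ and $\omega_C\otimes L^{-1}$ have multidegree $(1,g-1)$, and a section of a multidegree $(1,g-1)$ line bundle that vanished identically on $N_2$ would restrict on $N_1$ to a section vanishing at the two nodes of $C$ on $N_1$, i.e.\ to a section of a degree $-1$ bundle. Hence neither $s_1$ nor $s_2$ vanishes on $N_2$, so $(s_1\otimes s_2)|_{N_2}\neq 0$ and $s_1\otimes s_2\neq 0$.

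Next I would show $f$ lands in $\PP\Lambda_\tNd\restr{\Pic^\ud(C)}$. On the locus where the section $s_1$ has non-singular divisor, $\tNd$ agrees with $\Nd$ and $\PP\Lambda_\tNd$ with $\PP\Lambda_\Nd$; by the set-theoretic description \ref{Equ: set-theoretic description of Lambda Nd}, membership there means $D\leq\beta^\ast H$, which holds since $\beta^\ast\divv(s_1\otimes s_2)=\divv(\beta^\ast s_1)+\divv(\beta^\ast s_2)\geq\divv(\beta^\ast s_1)$. To pass from this open locus to all of $\PP\Lambda_\tNd^\ast$ I would establish that $\PP\Lambda_\tNd^\ast$ is irreducible of dimension $g$ with that locus dense: over the open subset of $\tNd\restr{\Pic^\ud(C)}$ where $\hh^0(\omega_C\otimes L^{-1})=1$ it is a $\PP^0$-bundle over an irreducible base of dimension $g$, and a dimension count using \ref{Prop: Martens Thoerem for singular bielliptic curves} (so that $W^1_\ud(C)$ has dimension $g-3$) rules out any component sitting over the higher Brill--Noether loci; the non-singular-divisor locus is nonempty, hence dense, because any non-singular $D\in\Nd$ gives a line bundle $L=\BO_C(D)\in\Theta$ with $\hh^0(\omega_C\otimes L^{-1})\geq 1$. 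As $\PP\Lambda_\tNd$ is closed, it follows that $f(\PP\Lambda_\tNd^\ast)\subseteq\PP\Lambda_\tNd\restr{\Pic^\ud(C)}$.

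For injectivity one recovers $[s_2]$ from $\bigl((L,[s_1]),[s_1\otimes s_2]\bigr)$ because multiplication by $s_1$ is injective on $\HH^0(\omega_C\otimes L^{-1})$: a section killed by $s_1$ would have to vanish on $N_2$, where $s_1$ does not, and is therefore zero by the multidegree computation above. For surjectivity, $f$ is proper over $\tNd\restr{\Pic^\ud(C)}$ (a projectivized coherent sheaf mapping to a closed subscheme of $\tNd\restr{\Pic^\ud(C)}\times|\omega_C|$), hence has closed image, which is irreducible of dimension $g$ and contained in the irreducible $g$-dimensional $\PP\Lambda_\tNd\restr{\Pic^\ud(C)}$, so it is everything; alternatively one checks directly that the conormal directions over the smooth points of $\Theta\cap\Pic^\ud(C)$, which by \ref{Prop: Riemann Singularity Theorem fro reducible curves} are exactly the $[s_1\otimes s_2]$, lie in the image and are dense. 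Finally $f$ is a bijective morphism, and invoking normality of the target (or arguing as for \ref{Prop: q ast Lambda theta = Lambda Nd} that both sides are reduced and $f$ is bijective) yields the asserted identification. I expect the main obstacle to be the second step: ensuring that $f$ lands in the \emph{strict} transform $\PP\Lambda_\tNd$ and not merely in the total transform $\PP\Lambda_\Nd\times_\Nd\tNd$ — which is precisely what the irreducibility-plus-density argument, resting on the Martens-type bound \ref{Prop: Martens Thoerem for singular bielliptic curves}, is designed to guarantee.
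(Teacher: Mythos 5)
Your proposal is correct and follows essentially the same route as the paper: use the Martens-type bound (\ref{Prop: Martens Thoerem for singular bielliptic curves}) to show $\PP\Lambda_\tNd^\ast\to\tNd$ is birational, hence $\PP\Lambda_\tNd^\ast$ is irreducible, observe that both sides agree over the dense open locus of non-singular divisors, and conclude by irreducibility. Your additional verifications (well-definedness of $s_1\otimes s_2$, injectivity via multiplication by $s_1$, and the strict- versus total-transform point) are sound elaborations of details the paper leaves implicit.
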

\begin{proof}
    By \ref{Prop: Martens Thoerem for singular bielliptic curves}, the projection $\PP\Lambda_\tNd^\ast\to \tNd$ is birational. In particular, $\PP\Lambda_\tNd^\ast$ is irreducible. From the case of smooth curves we know that $\PP\Lambda_\tNd\restr{\Pic^\ud(C)}$ and $\PP\Lambda_\tNd^\ast$ coincide over the open locus of non-singular divisors \cite[246]{arbarello}. Since both are irreducible, they are equal.
\end{proof}
\begin{corollary}\label{Cor: fibers of Lambda tNd above s+0 cannot be line bundles}
    Suppose $\ud=(1,g-1)$, let $M=[s]\in |\omega_C|^-$ such that $ s\restr{N_2}=0$. Then the fiber of $\PP\Lambda_\tNd \to |\omega_C|$ above $M$ is supported above $\PPP\setminus \Pic^\ud(C)$.
\end{corollary}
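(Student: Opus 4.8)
The plan is to argue by contradiction, using the explicit description of $\PP\Lambda_\tNd$ over the locus of line bundles given in \ref{Prop: description of Lambda tNd}. Suppose some point of the fiber of $\PP\Lambda_\tNd\to|\omega_C|$ above $M$ lies over $\Pic^\ud(C)\subset\PPP$. By \ref{Prop: description of Lambda tNd} it then corresponds to a triple $(L,[s_1],[s_2])$ with $L\in\Pic^\ud(C)$, $0\neq s_1\in\HH^0(C,L)$, $0\neq s_2\in\HH^0(C,\omega_C\otimes L^{-1})$, and $M=[s_1\otimes s_2]$. Writing $s\coloneqq s_1\otimes s_2$ (a representative of $M$), the hypothesis becomes $s\restr{N_2}=0$. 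Note that the fiber over $M$ is nonempty, and even positive-dimensional, by the first case of \ref{Prop: Fibers of Lambda Nd to omega N}; the content of the corollary is that no point of it lies over $\Pic^\ud(C)$.

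The key step is to convert $s\restr{N_2}=0$ into a vanishing statement for $s_1$ or $s_2$ alone. The component of $C$ lying over $E_2$ is $N_2$, which is integral, and $s\restr{N_2}=(s_1\restr{N_2})\cdot(s_2\restr{N_2})$ is a product of sections of line bundles on it; hence $s_1\restr{N_2}=0$ or $s_2\restr{N_2}=0$. Assume $s_1\restr{N_2}=0$, the case $s_2\restr{N_2}=0$ being identical after exchanging $L$ with $\omega_C\otimes L^{-1}$. Now one uses that $L$ is a genuine line bundle on $C$: evaluating the global section $s_1$ at the two nodes of $C$ --- each of which glues one of the points $P_1^0$, $P_1^\infty$ of $N_1$ to a point of $N_2$ --- and using $s_1\restr{N_2}=0$, one obtains $s_1\restr{N_1}(P_1^0)=s_1\restr{N_1}(P_1^\infty)=0$. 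Thus $s_1\restr{N_1}\in\HH^0(N_1,L\restr{N_1}(-P_1^0-P_1^\infty))$, which is zero because $L$ has multidegree $\ud=(1,g-1)$, so $L\restr{N_1}$ has degree $1<2$. Hence $s_1\restr{N_1}=0$ and $s_1=0$, a contradiction. In the symmetric case one uses $\underline{\deg}(\omega_C)=2\ud$, so that $\omega_C\otimes L^{-1}$ again has multidegree $\ud$ and the same degree count on $N_1$ applies.

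This contradiction shows that no point of the fiber of $\PP\Lambda_\tNd\to|\omega_C|$ above $M$ lies over $\Pic^\ud(C)$, i.e. the fiber is supported above $\PPP\setminus\Pic^\ud(C)$. I do not anticipate a real obstacle: the one point requiring care is the bookkeeping at the nodes of $C$ in the key step --- verifying that a section vanishing identically on $N_2$ is forced to vanish at both gluing points of $N_1$ --- and it is exactly the hypothesis $\ud=(1,g-1)$, which forces $L\restr{N_1}$ (and $(\omega_C\otimes L^{-1})\restr{N_1}$) to have degree $1$, that is used to reach the contradiction. The fiberwise description in \ref{Prop: Fibers of Lambda Nd to omega N} is not logically needed for the argument, but provides a useful cross-check.
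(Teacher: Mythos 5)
Your proof is correct and takes essentially the same route as the paper: assume a point of the fiber lies over $\Pic^\ud(C)$, use \ref{Prop: description of Lambda tNd} to write it as $(L,[s_1],[s_2])$ with $s_1\otimes s_2=s$, deduce from integrality of $N_2$ that one of $s_1,s_2$ vanishes on all of $N_2$, hence at both nodes, hence at $P_1^0,P_1^\infty$ on $N_1$, which is impossible since $L\restr{N_1}$ and $(\omega_C\otimes L^{-1})\restr{N_1}$ have degree $1$. Your version merely spells out the gluing at the nodes and the degree count that the paper leaves implicit.
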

\begin{proof}
    Suppose the contrary. By \ref{Prop: description of Lambda tNd} there is $(L,[s_1],[s_2])\in \PP\Lambda_{\tNd}^\ast$ such that 
    \[s_1 \otimes s_2=s \] 
    vanishes on $N_2$. But neither $s_1$ nor $s_2$ can vanish on all of $N_2$: Since the degree of the restriction of $s_1$ and $s_2$ to $N_1$ is $1$, if they vanish at both $P_1^0$ and $P_1^\infty$ they would be zero on $N_1$ as well.
\end{proof}
We end this section by introducing the following involution on $\PP\Lambda_\tNd^\ast$:
\begin{equation}\label{Definition of omega Lambda}
\begin{split}
    \omega_\Lambda : \PP\Lambda_\tNd^\ast & \to \PP\Lambda_\tNd^\ast  \\
    (L,[s_1],[s_2]) & \mapsto (\omega_C\otimes L^{-1},[s_2],[s_1])   \,,
\end{split}
\end{equation}
and 
\begin{equation}\label{Equ: Definition of tau lambda}
    \begin{split}
         \tau_\Lambda: \PP\Lambda_\tNd^\ast &\to \PP\Lambda_\tNd^\ast \\
         (L,[s_1],[s_2]) &\mapsto (\tau^\ast L, [\tau^\ast s_1], [\tau^\ast s_2] )\,.
    \end{split}
\end{equation}
By abuse of notation denote by $\tau$ the involution induced by $\tau$ on $|\omega_C|$. Clearly, we have

\begin{equation}\label{Equ: identities verified by omega Lambda}
\begin{aligned} 
\gamma_\tNd \circ \omega_\Lambda &= \gamma_\tNd \,,\\
\Nm \circ \omega_\Lambda(-) &= 2\cdot \delta - \Nm(-)\,, 
\end{aligned} \qquad 
\begin{aligned}
 \gamma_\tNd \circ \tau_\Lambda &= \tau \circ \gamma_\tNd \,,\\
 \Nm \circ \tau_\Lambda &= \Nm \,.
\end{aligned}
\end{equation}

\subsection{Chern-Mather class of the theta divisor}
We now prove the following:
\begin{lemma}\label{Lem: BECurves: Chern-Mather of theta}
Suppose $\underline{d}=(g)$ or $\ud=(1,g-1)$, then 
\[ \left[\PP\Lambda_\Theta\right] = \sum_{r=0}^g h^{r+1} \frac{\theta^{g-r}}{(g-r)!} \binom{2g-2r-2}{g-r-1}\cap \left[\PP T^\vee JC\right] \in \HH_{2g}(\PP T^\vee JC,\QQ) \,, \]
where $\theta$ is the pullback of the polarization on $JN$ and $h$ is the hyperplane class in $\PP T^\vee_0 JC$.
\end{lemma}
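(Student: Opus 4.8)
The plan is to compute $[\PP\Lambda_\Theta]$ by pushing forward from the explicit model $\PP\Lambda_\tNd$ provided by the earlier sections. By \ref{Prop: q ast Lambda theta = Lambda Nd} we have $\PP\Lambda_{\overline\Theta} = (\alpha\times\Id)_\ast \PP\Lambda_\tNd$, and the conormal variety $\PP\Lambda_\Theta$ inside $\PP T^\vee JC = JC\times|\omega_C|$ is the restriction of $\PP\Lambda_{\overline\Theta}$ to the locus of line bundles. So the first step is to express $[\PP\Lambda_\tNd]\in\HH_\ast(\tNd\times|\omega_C|)$ in terms of the classes $h = c_1(\BO_{|\omega_C|}(1))$, the $x_i$, and the $\theta_i$. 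Since $b':\PP\Lambda_\tNd\to\PP\Lambda_\Nd$ is the strict transform along the blowup $b:\tNd\to\Nd$ at $B = B^0\cap B^1$, and by \ref{Prop: positive-dimensional fibers of b'} the positive-dimensional fibers of $b'$ occur only in high codimension (when $P_i^0+P_j^\infty$ sits in both $D$ and $\beta^\ast H - D$), the map $b'$ is birational and the exceptional contributions do not affect the top-degree class. Hence $[\PP\Lambda_\tNd] = (b\times\Id)^\ast[\PP\Lambda_\Nd]$ up to classes supported on the exceptional locus, which push forward to zero in $\HH_{2g}$ of $JC\times|\omega_C|$ for dimension reasons.

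Next I would use \ref{Cor: Class of Lambda Nd}, which gives
\[ [\PP\Lambda_\Nd] = \sum_{r=0}^g h^r\, c_{g-r}\!\left(\bigoplus_{i=1}^n p_i^\ast E_{K,i}\right) \in \HH_{2g}(\Nd\times|\omega_C|,\QQ)\,. \]
The key point is that when we push forward along $\alpha\times\Id$ and then restrict to the line bundle locus $\Pic^\ud(C)\subset\PPP$, the relevant piece of $\tNd$ is $\tNd\restr{\Pic^\ud(C)}$, described in \ref{Prop: description of tNd} as pairs $(L,[s])$ with $[s]\in\PP\HH^0(C,L)$; in the cases $\ud=(g)$ and $\ud=(1,g-1)$ exactly one symmetric-product factor carries the "large" degree (all of $g$, resp. $g-1$), and the blowup $b$ only modifies things over the codimension-$\geq 2$ locus $B$. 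So after pushforward the total Chern class $c\big(\bigoplus p_i^\ast E_{K,i}\big)$ collapses: the factors $N_{i,d_i}$ with small $d_i$ contribute only in bounded degree, and one uses the Poincaré-formula computations \ref{Equ: alpha pushforward classes of E K,i (singular case)} together with $\alpha_{N,\ast}$ of monomials in $x_i$ and $\theta_i$. Concretely, $\alpha_{N,\ast} c_r(E_{K,i}) = \frac{\theta_i^r}{r!}\binom{2r}{r}$, and since $\theta = \sum\theta_i$ and the polarization on $JN$ restricted appropriately recombines these, the sum $\sum_r h^r\,\alpha_\ast c_{g-r}(\oplus p_i^\ast E_{K,i})$ telescopes into $\sum_r h^r \frac{\theta^{g-r}}{(g-r)!}\binom{2g-2r}{g-r}$-type terms; the shift to $\binom{2g-2r-2}{g-r-1}$ and the extra power $h^{r+1}$ comes from intersecting $[\PP\Lambda_{\overline\Theta}]$ with the class of $\PPP$ (equivalently, with $[\Pic^\ud(C)]\subset\PPP$, which costs one factor of $h'$ pulled back, i.e. one hyperplane class) and from the identity $\binom{2r}{r} = \binom{2r-2}{r-1}\cdot\frac{2(2r-1)}{r}$ reorganizing the coefficients once the $h$-grading is shifted.

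The main obstacle, I expect, is bookkeeping the effect of restricting from $\PPP$ (the $\PP^1$-bundle compactification) to the open locus $\Pic^\ud(C)$: one must check that $\PP\Lambda_\Theta$, the closure of the conormal variety over line bundles inside $JC\times|\omega_C|$, is obtained from $\PP\Lambda_{\overline\Theta}$ by intersecting with the divisor class $h' = c_1(\BO_\PPP(1))$ — or rather, that the relevant class computation is insensitive to the boundary $\PPP\setminus\Pic^\ud(C)$ because of \ref{Cor: fibers of Lambda tNd above s+0 cannot be line bundles} and dimension count, so that passing to $JC\times|\omega_C|$ via the inclusion $JC\hookrightarrow JN\times\CC^\ast\subset\PPP$ introduces exactly one hyperplane factor. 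Once that is pinned down, the remaining work is purely the combinatorial identity verifying that the collapsed Chern-class sum equals $\sum_{r=0}^g h^{r+1}\frac{\theta^{g-r}}{(g-r)!}\binom{2g-2r-2}{g-r-1}$, which follows from \ref{Equ: chern class of E K,i (singular cas)}, \ref{Equ: alpha pushforward classes of E K,i (singular case)}, and standard manipulation of binomial coefficients, handling $\ud=(g)$ and $\ud=(1,g-1)$ in parallel since the extra low-degree factor $N_{1,1}$ in the second case contributes only $1 + x_1$ with $x_1^2 = 0$ and $\theta_1 = 0$ on a genus-$1$ curve, leaving the genus-$(g-1)$ computation essentially unchanged.
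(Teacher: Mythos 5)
Your overall strategy --- expressing $[\PP\Lambda_\Nd]$ as a degeneracy class via \ref{Cor: Class of Lambda Nd}, pushing forward along $\alpha\times\Id$ with Poincar\'e's formula and \ref{Equ: alpha pushforward classes of E K,i (singular case)}, and invoking \ref{Prop: q ast Lambda theta = Lambda Nd} --- is the same as the paper's. But there is a genuine gap that changes the answer: you discard the blowup correction. You assert that $[\PP\Lambda_\tNd]=(b\times\Id)^\ast[\PP\Lambda_\Nd]$ up to classes supported on the exceptional locus, which ``push forward to zero in $\HH_{2g}$ for dimension reasons.'' This is false: the exceptional divisor of $b:\tNd\to\Nd$ over a point of $B$ is the whole $\PP^1$-fibre of $\PPN$, and $\alpha$ maps it onto the corresponding fibre of $\PPP\to\Pic^\ud(N)$, which meets $JC$; nothing is contracted and no dimension is lost. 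The paper computes the correction explicitly, $[\PP\Lambda_\tNd]=(b\times\Id)^\ast[\PP\Lambda_\Nd]-(q_N\times\Id)^\ast[B']$ with $[B']=x^2\sum_r h^r c_{g-r-1}(E_{K})\cap[\Nd\times|\omega_C|]$ in the case $\ud=(g)$, and after pushforward this subtracts $\frac{\theta^{g-r+1}}{(g-r+1)!}\binom{2g-2r}{g-r+1}$ from the $h'$-free part of the class. Since $[h'\restr{JC}]=0$, it is exactly the $h'$-free part that survives restriction to $JC\times|\omega_C|$, and the identity $\binom{2g-2r+1}{g-r+1}-\binom{2g-2r}{g-r+1}=\binom{2g-2r}{g-r}$ is what produces the stated coefficient. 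Omitting $[B']$ leaves $\binom{2g-2r+1}{g-r+1}$ instead, which is a different (and wrong) answer.

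Two further inaccuracies. First, your mechanism for the shift from $h^{r}\binom{2g-2r}{g-r}$ to $h^{r+1}\binom{2g-2r-2}{g-r-1}$ (``intersecting with the class of $\PPP$ costs one factor of $h'$'') does not correspond to anything in the computation: $\Pic^\ud(C)$ is open dense in $\PPP$, so restricting to it kills $h'$ rather than multiplying by it, and the apparent shift in the displayed formula is a pure reindexing $r\mapsto r+1$. The genuine source of the single surviving power of $h$ per term is the factor $[\tNd]=(x_1+\cdots+x_n+h')$ from \ref{Equ: Class of tilde Nd (blowup of Nd)} combined with the correction above, not a restriction divisor. Second, in the case $\ud=(1,g-1)$ you set $\theta_1=0$; but $N_1$ has genus $1$, so $\theta_1\neq 0$ in $\HH^2(JN_1,\QQ)$ and it enters nontrivially (the paper's intermediate answer recombines as $(\theta_1+\theta_2+h')^{g-r+1}$, and there are three blowup centres $B_{12}$, $B_{21}$, $B_{22}$, each contributing its own correction). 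So this case is not ``essentially unchanged'' from the irreducible one and needs its own bookkeeping.
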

\begin{remark}
    Our proof gives a recipe to do the above computation for a general $\ud$, but as the computation would become much more cumbersome, we restrict to these cases. We expect the formula to be more complicated in the general case.
\end{remark}
\begin{proof}
\emph{Case $\ud=(g)$.} Let $x=[N_{g-1}]\in \HH^2(N_{g},\QQ)$ and $\theta \in \HH^2(JN,\QQ)$ denote the class of the polarization, $h\in \HH^2(|\omega_C|,\QQ)$ and $h'\in \HH^2(\PPP,\QQ)$ denote the respective hyperplane classes. By \ref{Equ: Class of tilde Nd (blowup of Nd)}, \ref{Equ: chern class of E K,i (singular cas)}, \ref{Cor: Class of Lambda Nd} and \ref{Tikzcd: diagram defining Lambda tNd} we have in $\HH_{2g}(\PPN \times |\omega_C| ,\QQ)$
\begin{align*} 
\left[(b\times\Id)^\ast \PP\Lambda_{\Nd}\right]&=\left(\sum_{r=0}^g h^r c_{g-r}( E_{K}) \right)\cap \left[\tNd\times |\omega_C|\right]\\
&=(x+h')\left(\sum_{r=0}^g h^r c_{g-r}( E_{K}) \right) \cap \left[\PPN\times |\omega_C|\right] \,.
\end{align*}
Recall from \ref{Sec: The Abel-Jacobi Map} that the center of the blowup $b:\tNd\to \Nd$ is
\[ B= \{ P^0+P^\infty +N_{g-2} \}\subset \Nd \,. \]
Let $B'=(\PP\Lambda_\Nd \cap (B\times |\omega_C|)) $, then
\begin{align*}
    B'=\{ (D,H)\in N_{g-2}\times |\omega_C|\,\big|\, P^0+P^\infty + D \leq \beta^\ast H \}
\end{align*}
A canonical divisor containing $P^0$ must be in $|\omega_C|^-\subset |\omega_C|$. Thus under the identification $\rho:|\omega_C|^-\simeq  |\omega_N|$ the above is equal to the vanishing locus of the composition of maps of vector bundles on $N_{g-2}\times |\omega_N|$
\[ \BO_{|\omega_N|}(-1) \to \HH^0(N,\omega_N) \to E_{K,N_{g-2}}\,,\]
where $E_{K,N_{g-2}}$ is the corresponding evaluation bundle on $N_{g-2}$. By \cite[340]{arbarello} we have
\begin{align*} 
c_r(E_{K,N_{g-2}} )\cap [N_{g-2}] &= \sum_{k=0}^r \binom{r}{k} x ^k \frac{\theta^{r-k}}{(r-k)!} \cap [N_{g-2}] \\
&= x^2 c_r(E_{K}) \cap [N_{g}] \,.
\end{align*}
Thus
\begin{align*}
    [B']&= \sum_r h^r c_{g-r-2}(E_K) \cap [N_{g-2}\times |\omega_N|] \\
    &=x^2 h\sum_r h^r c_{g-r-2}(E_K) \cap [N_\ud \times |\omega_C|] \\
    &= x^2 \sum_r h^r c_{g-r-1}(E_K)\cap [N_\ud\times |\omega_C|] \in \HH_{2g-2}(\Nd\times |\omega_C|,\QQ)\,.
\end{align*}
By the blowup formula \cite[Th. 6.7]{Fulton1998} and \ref{Equ: alpha pushforward classes of E K,i (singular case)} we have
\begin{align*}
 (\alpha\times \Id)_\ast [\PP\Lambda_\tNd]&= (\alpha\times \Id)_\ast \left( (b\times \Id)^\ast [\PP\Lambda_\Nd]- (q_N\times\Id)^\ast [ B' ] \right)  \\
  &=\sum_r h^r \left( h' \frac{\theta^{g-r}}{(g-r)!} \binom{2g-2r}{g-r} +\frac{\theta^{g-r+1}}{(g-r+1)!} \binom{2g-2r+1}{g-r+1}\right.\\
  &\qquad \left.-\frac{\theta^{g-r+1}}{(g-r+1)!} \binom{2g-2r}{g-r+1} \right) \\
  &= \sum_r h^r h' \frac{\theta^{g-r}}{(g-r)!} \binom{2g-2r}{g-r} +  h^r \frac{\theta^{g-r+1}}{(g-r+1)!} \binom{2g-2r}{g-r} \\
  &= \sum_r h^r \frac{(\theta + h')^{g-r+1}}{(g-r+1)!} \binom{2g-2r}{g-r}\cap \left[ \PPP\times |\omega_C| \right] \,. 
   \end{align*}

\emph{Case $\underline{d}=(1.g-1)$.} For $i=1,2$ let $x_i=[N_{i,d_i-1}]\in \HH^2(N_{i,d_i},\QQ)$ and $\theta_i \in \HH^2(JN_i,\QQ)$ denote the class of the polarization. By \ref{Equ: Class of tilde Nd (blowup of Nd)}, \ref{Equ: chern class of E K,i (singular cas)} and \ref{Cor: Class of Lambda Nd} we have
\begin{align*} 
[(b\times \Id)^\ast  \PP\Lambda_{\Nd}]&=(x_1+x_2+h')\left(\sum_{r=0}^g h^r c_{g-r}( E_{K,1}\oplus E_{K,2}) \right) \\
&= (x_1+x_2+h')\left(\sum_{r=0}^g h^r 2 x_1 c_{g-r-1} + c_{g-r}\right)\\
&\in \HH_{2g}(\PP^1\times \Nd \times |\omega_C|,\QQ ) \,, 
\end{align*}
where we denote $c_r(E_{K,2})$ by $c_r$. Recall from \ref{Sec: The Abel-Jacobi Map} that the center of the blowup $\tNd\to \Nd$ is
\[ B=  B_{12}\cup B_{21} \cup B_{22} \subset \Nd\,, \]
with 
\begin{align*}
    B_{12}&=\{P_1^0\}\times \{ P_2^\infty + N_{2,d_2-1} \}\,, \\
    B_{21}&=\{P_1^\infty\}\times \{ P_2^0+ N_{2,d_2-1} \} \,, \\
    B_{22}&= N_1 \times \{ P_2^0+P_2^\infty +N_{2,d_2-2} \} \,.
\end{align*}
Let $B'_{ij}=(\PP\Lambda_\Nd \cap (B_{ij}\times |\omega_C|))$, then
\begin{align*}
    B'_{12}=\{ D,H\in N_{2,d_2-1}\times |\omega_C|\,\big|\, P_1^0+P_2^\infty + D \leq \beta^\ast H \}
\end{align*}
Since a canonical divisor containing $P_1^0$ must be in $|\omega_N|\simeq |\omega_C|^-\subset |\omega_C|$, the above is equal to the vanishing locus of the composition of maps of vector bundles on $N_{2,d_2-1}\times |\omega_N|$
\[ \BO_{|\omega_N|}(-1) \to \HH^0(N,\omega_N) \to E_{\omega_{N_2}(P_2^0)}\,,\]
where $E_{\omega_{N_2}(P_2^0)}$ is the corresponding evaluation bundle on $N_{2,d_2-1}$. Notice that
\begin{align*} 
c_r(E_{\omega_{N_2}(P_2^0)} )\cap N_{2,d_2-1} &= \sum_{k=0}^r \binom{r}{k} x_2 ^k \frac{\theta_2^{r-k}}{(r-k)!} \cap [N_{2,d_2-1}] \\
&= x_2 c_r(E_{K,2}) \cap [N_{2,d_2}] \,
\end{align*}
thus
\begin{align*}
    [B'_{12}]&= \sum_r h^r c_{g-r-2}(E_{\omega_{N_2}(P_2^0)}) \cap [N_{2,d_2-1}\times |\omega_N|] \\
    &= x_1 x_2  \sum_{r} h^{r+1} c_{g-r-2} \cap [\Nd\times |\omega_C|]\in \HH_{2g-2}(\Nd\times |\omega_C|,\QQ)\,.
\end{align*}
Clearly
\[ [B'_{12}]=[B'_{21}] \,. \]
In the same way, we have that $B'_{22}$ is $N_1$ times the vanishing locus of the composition of morphism of vector bundles on $N_{2,d_2-2}$
\[ \BO_{|\omega_{N_2}|}(-1) \to \HH^0(N_2,\omega_{N_2}) \to E_{\omega_{N_2}} \,.\]
Again $c_r(E_{\omega_{N_2}})\cap [N_{2,d_2-2}]=x_2^2 c_r \cap [N_{2,d_2}]$, thus
\begin{align*}
    [B'_{22}]&= \sum_r h^r c_{g-r-3}(E_{\omega_{N_2}}) \cap [N_1\times N_{2,d_2-2}\times |\omega_{N_2}|] \\
    &= x_2^2 \sum_{r} h^{r+2} c_{g-r-3} \cap [\Nd\times |\omega_C|]\in \HH_{2g-2}(\Nd\times |\omega_C|,\QQ)\,.
\end{align*}
By the blowup formula \cite[Th. 6.7]{Fulton1998}, we have in $\HH_{2g}(\PPN \times |\omega_C|,\QQ )$
\begin{align*}
    [\PP\Lambda_{\tNd}]&=(b\times \Id)^\ast [\PP\Lambda_\Nd]- (q_N\times \Id)^\ast[B'] \\
    &=(b\times\Id)^\ast [\PP\Lambda_\Nd]- (q_N\times\Id)^\ast\left(2[B'_{12}]-[B'_{22}]\right) \\
    &=(x_1+x_2+h')\left(\sum_{r=0}^g h^r (2x_1 c_{g-r-1} + c_{g-r})\right) - 2x_1x_2\sum_r h^r c_{g-r-1} - x_2^2\sum_{r} h^r c_{g-r-1}  \\
    &=\sum_r h^r \left( h'(2x_1 c_{g-r-1}+c_{g-r}) + (x_1+x_2) c_{g-r}-x_2^2 c_{g-r-1})\right) \cap [\PPN \times |\omega_C|]\,.
\end{align*}
Thus by \ref{Equ: alpha pushforward classes of E K,i (singular case)} we have in $\HH_{2g}(\PPP\times|\omega_C|,\QQ)$
\begin{align*}
  (\alpha\times \Id)_\ast[\PP\Lambda_\tNd]&= \sum_r h^r \left( h'(2 \theta_1 \frac{\theta_2^{g-r-1}}{(g-r-1)!} \binom{2g-2r-2}{g-r-1}+ \frac{\theta_2^{g-r}}{(g-r)!} \binom{2g-2r}{g-r}) \right.\\
    & \quad \left. +\theta_1 \frac{\theta_2^{g-r}}{(g-r)!} \binom{2g-2r}{g-r}+\frac{\theta_2^{g-r+1}}{(g-r+1)!} \binom{2g-2r+1}{g-r+1} - \frac{\theta_2^{g-r+1}}{(g-r+1)!} \binom{2g-2r}{g-r+1} \right)\\
   &=h'\sum_r h^r\left( 2 \theta_1 \frac{\theta_2^{g-r-1}}{(g-r-1)!} \binom{2g-2r-2}{g-r-1}+ \frac{\theta_2^{g-r}}{(g-r)!} \binom{2g-2r}{g-r}\right) \\
   &\quad + \sum_r h^r \frac{(\theta_1+\theta_2)^{g-r+1}}{(g-r+1)!}\binom{2g-2r}{g-r}  \\
   &=\left(\sum_r h^r \frac{(\theta_1+\theta_2+h')^{g-r+1}}{(g-r+1)!}\binom{2g-2r}{g-r}-h^rh' \theta_1 \frac{\theta_2^{g-r}}{(g-r)!}\binom{2g-2r-2}{g-r}\right) \,. 
\end{align*}
The Lemma then follows from \ref{Prop: q ast Lambda theta = Lambda Nd}, and the fact that $[h'\restr{JC}]=0$.
\end{proof}

\section{Pryms associated to biellitpic curves}\label{Sec: Pryms associated to biellitpic curves}
We keep the notations of Section \ref{Sec: Singular bielliptic curves}, i.e. $C$ is a nodal curve of genus $g+1$, $\pi:C\to E$ is a double covering of type (3), $E$ is a cycle of $n$ $\PP^1$'s, $\Delta$ is the branch locus of $\pi$ and $\ud\coloneqq \underline{\deg}(\Delta)/2$. Moreover we now fix $\delta\in \Pic^\ud(E)$ with $\Delta\in |\delta^{\otimes 2}|$. We define
\begin{align*} 
P &\coloneqq \{L\in \Pic^\ud(C)\,|\, \Nm(L)=\delta \} \subset \Pic^\ud(C) \,, \\
\Xi  &\coloneqq \Theta\cap P \subset P \,.
\end{align*}
These notations are fixed for the remainder of Section \ref{Sec: Pryms associated to biellitpic curves}. We have the following commutative diagram, whose rows and columns are exact \cite{Beauville1977}
\[
\begin{tikzcd}
  & 0 \arrow[d] & 0 \arrow[d] & 0 \arrow[d] & \\
    0 \arrow[r] &\ZZ/2\ZZ \arrow[d,hook] \arrow[r] & P \arrow[d,hook] \arrow[r] & \Pic^\ud(N) \arrow[d] \arrow[r] & 0 \\
    0 \arrow[r] & \CC^\ast \arrow[r] \arrow[d, "z \mapsto z^2" lablrot] \arrow[d] & \Pic^\ud(C) \arrow[r,"\beta^\ast"] \arrow[d,"\Nm"] & \Pic^\ud(N) \arrow[d] \arrow[r] & 0 \\
   0 \arrow[r] & \CC^\ast \arrow[r] \arrow[d] & \Pic^\ud(E) \arrow[d] \arrow[r] & 0 & \\
   & 0 & 0 & &
\end{tikzcd}\,.
\]
In particular, there is a degree $2$ isogeny of polarized abelian varieties $P\to \Pic^\ud(N)$. We thus have an identification
\begin{equation}\label{Equ: Tvee P and Tvee JN is omegaN} 
T^\vee_0 P \simeq T^\vee_0 JN = \HH^0(N,\omega_N)\,. 
\end{equation}
Let
\[W\coloneqq \overline{\Nd \times_{\Pic^\ud(E)} \{\delta\} }\,, \quad \text{and} \quad \tilde{W}\coloneqq \tNd\times_{\Pic^\ud(E)}\{\delta\} \,. \]
Let $R$ be the ramification divisor of $\pi: C\to E$, and
\[ \Wasing  \coloneqq \{D\in W\,|\, D\leq R \} \,, \qquad  \Wtasing \coloneqq b^{-1}(\Wasing)\subset \tilde{W} \,, \] 
Recall that $R$ is non-singular thus $b$ is a local isomorphism near $\Wasing$. The Abel-Jacobi map $\alpha$ restricts to a map $\phi\coloneqq \alpha\restr{\tilde{W}}: \tilde{W}\to \Xi$. We have the following: 
\begin{lemma}
    The singular locus of $\tilde{W}$ is
    \[ \Sing(\tilde{W})=\left(b^{-1}(B^{0\infty})\cap \tilde{W}  \right)\cup \Wtasing \,,\]
    where
    \begin{align*} 
    B^{0\infty}&\coloneqq \{ D\in \Nd\,|\, P_i^0+P_j^0+P_k^\infty+P_l^\infty \leq D \,, \text{ for some $i\neq j$ and $k\neq l$.} \} \,.
    \end{align*}
    Moreover, at a point $x\in \Wtasing$, $\tilde{W}$ has a quadratic singularity of maximal rank, i.e. locally analytically we have
    \[ (\tilde{W},x) \simeq (V(x_1^2+\cdots+x_g^2),0) \subset (\AAA^g,0) \,. \]
\end{lemma}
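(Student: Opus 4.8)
The plan is to work locally on $\tilde W$ inside $\tNd$, using the explicit local description of $\tNd$ provided by Lemma \ref{Lem: singularities of tNd}, together with the defining equation of $W$ as the fiber of the norm map $\Nm\colon\Nd\to\Pic^\ud(E)$ over $\delta$. Since $\tilde W=\tNd\times_{\Pic^\ud(E)}\{\delta\}$, a point $x\in\tilde W$ is singular either because $\tNd$ is already singular there, or because $\tNd$ is smooth at $x$ but the equation cutting out $\tilde W$ fails to be a submersion (equivalently, the differential of the norm map, restricted to $T_x\tNd$, drops rank). The first source of singularities is governed by Lemma \ref{Lem: singularities of tNd}: the local model $V(x_1\cdots x_k-x_{k+1}\cdots x_{k+l})$ is singular exactly when $k+l\geq 3$ (if $k+l\leq 1$ it is smooth, and $k=l=1$ gives a smooth hypersurface $x_1=x_2$). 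Unwinding the definition of $k$ and $l$ there, and using that on $W$ the divisor $D$ satisfies $\Nm(D)=\delta$, one finds that a point where $\tNd$ itself is singular lies on $\tilde W$ only in the cases recorded by $B^{0\infty}$ (two distinct $P^0$'s and two distinct $P^\infty$'s appearing in $D$) or in the locus $\Wtasing$ where $D\leq R$; I would spell out this case analysis carefully, since the interaction of the condition $\Nm(D)=\delta$ with the multiplicities $a_i^0,a_i^\infty$ and the blow-up parameter $(\lambda:\mu)$ is the combinatorially delicate point.

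For the second source, at a point $x\in\tilde W$ where $\tNd$ is smooth (so either $x$ lies above a non-singular divisor, or $k+l\leq 1$ in Lemma \ref{Lem: singularities of tNd}), $\tilde W$ is singular at $x$ iff the composite $T_x\tNd\xrightarrow{d\alpha}T_{\alpha(x)}\Pic^\ud(C)\xrightarrow{d\Nm}T\Pic^\ud(E)$ is not surjective. The key computation is that this composite fails to be surjective precisely when $x$ lies over a divisor $D$ with $D\leq R$: indeed, $\divv(p^\ast s_E)=R$ as a section of $\omega_C$ (cf.\ Section \ref{Sec: The Abel-Jacobi Map}), and the obstruction to deforming $D$ inside the norm-fiber is controlled by the value of the one-form $s_R=p^\ast s_E$ at $D$, which vanishes exactly on the points of $R$. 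This identifies the remaining singular locus with $\Wtasing$, matching the statement.

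Finally, for the quadratic-singularity claim at $x\in\Wtasing$: here $b$ is a local isomorphism (since $R$ is non-singular, so $D\leq R$ forces $D$ to be reduced and away from the blow-up center $B$, as a point of $R$ cannot equal any $P_i^0$ or $P_i^\infty$). So we may compute inside $\Nd$, or rather inside the symmetric products of the $N_i$. The Prym-theta situation is the standard one: near such a $D$, the local model of $W\subset\Nd$ cut by $\Nm(-)=\delta$ is the zero locus of a single function whose $2$-jet is the quadratic form attached to the cup-product pairing $\HH^0(C,L)\otimes\HH^0(C,\omega_C L^{-1})\to\HH^0(C,\omega_C)^+$ composed with the residue/trace map to the relevant one-dimensional quotient; an argument parallel to the Riemann Singularity Theorem \ref{Prop: Riemann Singularity Theorem fro reducible curves} (in its Prym incarnation, cf.\ the classical Mumford--Welters analysis) shows this quadric has maximal rank $g$, i.e.\ $(\tilde W,x)\simeq(V(x_1^2+\cdots+x_g^2),0)$.

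I expect the main obstacle to be the first step: correctly enumerating which singular points of $\tNd$ survive on the norm-fiber $\tilde W$, because this requires carefully tracking how the constraint $\Nm(D)=\delta$ limits the admissible combinations of the multiplicities $\#\{i\mid P_i^0\leq D\}$, $\#\{i\mid P_i^\infty\leq D\}$ and the blow-up coordinate $\delta_{\lambda,0},\delta_{\mu,0}$ from Lemma \ref{Lem: singularities of tNd}, and ruling out the spurious combinations. The transversality computation in the smooth locus and the maximal-rank statement for the quadratic singularity are, by contrast, routine once set up via the pairing $\phi$ of \ref{Prop: Riemann Singularity Theorem fro reducible curves}.
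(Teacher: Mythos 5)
Your overall architecture is the same as the paper's (split according to whether the point lies over the blow-up centre $B$; away from $B$ detect singularity of the norm fibre via the vanishing of $s_R$ along $D$; treat $\Wtasing$ separately), but the two steps that carry the actual content of the lemma are, respectively, asserted rather than proved, and deferred with an incorrect intermediate criterion. First, the maximal-rank claim at $x\in\Wtasing$. The paper does not invoke any Prym-flavoured Riemann singularity theorem: since $R$ is reduced and disjoint from the $P_i^k$, one writes $D=P_1+\cdots+P_g\leq R$ and chooses coordinates $z_i$ centred at $P_i$ with $\pi_N(z_i)=z_i^2+Q_i$; the norm fibre is then cut out by $\prod_i(z_i^2+Q_i)-\prod_iQ_i$, whose $2$-jet is $\sum_i\bigl(\prod_{j\neq i}Q_j\bigr)z_i^2$ with all $Q_j\in\CC^\ast$, a visibly nondegenerate quadratic form, and the Morse lemma finishes. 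Your proposal replaces this one-line computation by an appeal to a ``Mumford--Welters analysis'' of a cup-product quadric and simply asserts maximal rank. Since maximal rank is precisely the assertion to be proved (and is what makes the parity of $g$ enter Theorem \ref{Thm: Characteristic Cycle of Xi}), this is a genuine gap; note also that at these points $\hh^0(C,L)=1$, so the singularity is a tangency of $\Theta$ with $P$ rather than a higher-$\hh^0$ phenomenon, and it is not clear the pairing of Proposition \ref{Prop: Riemann Singularity Theorem fro reducible curves} is the right object without further work.

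Second, over the blow-up centre your singularity criterion for the local model $V(x_1\cdots x_k-x_{k+1}\cdots x_{k+l})$, namely $k+l\geq 3$, is wrong: for $k=2$, $l=1$ this is the smooth hypersurface $x_3=x_1x_2$; the correct criterion is $k\geq 2$ \emph{and} $l\geq 2$. With your criterion you would wrongly declare $\tilde{W}$ singular at points whose divisor contains two distinct $P^0$'s but only one $P^\infty$, which do not lie in $B^{0\infty}$. Moreover the ``combinatorially delicate'' interaction with the norm constraint that you postpone is exactly where the argument lives, and it admits a clean resolution: in the local trivialization of \ref{Equ: equation defining tNd} the norm map reads $(D,\lambda:\mu)\mapsto(\lambda^2:\mu^2)$, and since $\delta\notin\{0,\infty\}$ one gets $\lambda\mu\neq 0$ on $\tilde{W}$; hence the contributions $\delta_{\lambda,0}$, $\delta_{\mu,0}$ vanish, the fibre coordinate is eliminated, and $\tilde{W}$ is locally $V\bigl(\prod_is_i^0-c\prod_is_i^\infty\bigr)$ with $c\neq 0$ inside $\Nd$, where the $B_i^k$ are smooth normal crossing divisors. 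The differential $d\bigl(\prod_is_i^0\bigr)-c\,d\bigl(\prod_is_i^\infty\bigr)$ vanishes exactly when at least two of the $s_i^0$ and at least two of the $s_j^\infty$ vanish, which is the definition of $B^{0\infty}$. Without this observation the enumeration you defer does not close.
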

\begin{proof}
    Let $\tilde{D}\in \tilde{W}$, and $D=b(\tilde{D})$. Recall that $b:\tNd \to \Nd$ is the blowup at $B\coloneqq \{ D\in \Nd\,|\, P_i^0+P_j^\infty \leq D \}$. \par 
    \emph{Step 1: Suppose $D\notin B$.} Then $b$ is a local isomorphism at $D$, and thus induces a local isomorphism $(\tilde{W},\tilde{D})\to (W,D)$. Suppose that $D\leq R$. We can assume that $D=P_1+\cdots+P_g$. For $1\leq i \leq g$, the morphism $\pi_N:N\to E$ is ramified at $P_i$ thus there are local coordinates $z_i$ on $N$ centered at $P_i$ such that
    \[ \pi_N(z_i)=z_i^2+Q_i \,, \quad \text{where $Q_i=\pi(P_i)$}\,. \]
    Moreover $z_1,\dots,z_g$ define coordinates on $\Nd$ locally near $D$. On $JE=\CC^\ast$ the group law is multiplicative thus the condition to map to $\delta$ by $\Nm$ reduces locally near $D$ to 
    \[ \prod_{i=1}^g (z_i^2+Q_i) - Q_1\cdots Q_g =0 \,, \]
    where we view the points $\alpha_N(Q_i)\in \Pic^1(E)\simeq \CC^\ast$ as complex numbers by abuse of notations. The Hessian of the above function is non-degenerate, thus by the Morse Lemma this is a quadratic singularity of maximal rank. \par  
    We now assume $D\nleq R$. Then $D=D'+P_0$ for some point $P_0\nleq R$. Locally near $P_0$ there is the embedding
    \begin{align*}
       i_{D'} : N &\hookrightarrow \Nd \\
        P&\mapsto P+D' \,.
    \end{align*}
    The composite $\Nm\circ i_{D'}$ has non-zero differential at $P_0$ thus $\Nm:\Nd \dashrightarrow \PP^1$ has non-zero differential at $D$. Thus $W$ (resp. $\tilde{W}$) is smooth at $D$ (resp. $\tilde{D}$). \par 
    \emph{Step 2: Suppose $D\in B$.} Let $U\subset \Nd$ be an open set and $(q_N\restr{U},\lambda,\mu):\PPN\restr{U}\to U\times \PP^1$ be a local trivialization of $\PPN$ such that $\tNd$ is the vanishing locus of
    \[ \lambda s_1^0 s_2^0\cdots s_n^0 - \mu s_1^\infty s_2^\infty \cdots s_n^\infty \]
    as in \ref{Equ: equation defining tNd}, where for $k\in \{0,\infty\}$, $1\leq i \leq n$,
    \[ \divv s_i^k= B_i^k= P_i^k+ N_{\ud-e_i} \subset N_\ud\,. \]
    Above the trivialization $U$, the norm map becomes
    \begin{align*}
        \Nm: U\times \PP^1 &\to \PP^1 \\
        (D,\lambda:\mu) &\mapsto (\lambda^2:\mu^2) \,.
    \end{align*}
    Under this identification we have $\delta\in \PP^1\setminus \{0,\infty\}$. The divisors $B_i^k$ are normal crossing divisors, thus the result follows.
\end{proof}
\begin{corollary}\label{Cor: tilde W is smooth}
    If $\ud=(g)$ or $\ud=(1,g-1)$, then $\Sing(\tilde{W})=\Wtasing$.
\end{corollary}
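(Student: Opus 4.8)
The plan is to reduce everything to the Lemma above, which already identifies
\[ \Sing(\tilde{W}) = \bigl(b^{-1}(B^{0\infty})\cap \tilde{W}\bigr) \cup \Wtasing \,. \]
So it suffices to check that the auxiliary locus $B^{0\infty}\subset \Nd$ --- the set of divisors $D$ with $P_i^0+P_j^0+P_k^\infty+P_l^\infty\le D$ for some $i\neq j$ and $k\neq l$ --- is empty when $\ud=(g)$ or $\ud=(1,g-1)$. This is a pure bookkeeping argument on components and degrees, and I expect no genuine obstacle; the whole content of the corollary is that the ``extra'' singular stratum $b^{-1}(B^{0\infty})$ disappears in these two low-multidegree cases.

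First, for $\ud=(g)$ the curves $E$ and $N$ have a single component ($n=1$), so there are no distinct indices $i\neq j$; hence the defining condition of $B^{0\infty}$ is vacuous and $B^{0\infty}=\emptyset$. Next, for $\ud=(1,g-1)$ we have $n=2$ and $\Nd=N_{1,1}\times N_{2,g-1}$. Here $i\neq j$ forces $\{i,j\}=\{1,2\}$, so membership in $B^{0\infty}$ would require in particular $P_1^0\le D$; likewise $k\neq l$ would require $P_1^\infty\le D$. But the $N_1$-component of any $D\in\Nd$ has degree $d_1=1$ and $P_1^0\neq P_1^\infty$ (they lie over the two distinct nodes of $E$ on $E_1$), so it cannot contain both of these points. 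Hence again $B^{0\infty}=\emptyset$.

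In either case $b^{-1}(B^{0\infty})=\emptyset$, so the Lemma immediately yields $\Sing(\tilde{W})=\Wtasing$, which is the assertion. The only thing one has to keep straight is the convention recording which ramification points $P_i^0,P_i^\infty$ lie on which component $N_i$, together with the multidegree constraints $d_1=1$ and $d_2=g-1$ in the second case; with that in hand the corollary is purely formal.
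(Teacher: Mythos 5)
Your argument is correct and is exactly the paper's proof (the paper simply says $B^{0\infty}$ is ``empty for degree reasons''), with the two cases spelled out explicitly: vacuity of $i\neq j$ when $n=1$, and the impossibility of $P_1^0+P_1^\infty\leq D\restr{N_1}$ when $d_1=1$. Nothing further is needed.
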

\begin{proof}
    In these two cases the set $B^{0\infty}$ is empty for degree reasons.
\end{proof}
\begin{corollary}\label{Cor: points in Xiasing a isolated singularities of maximal rank}
    Let 
    \[ \Xiasing\coloneqq \phi\left(\Wtasing\right) \subset \Xi\,. \] 
    The points of $\Xiasing$ are isolated singularities of maximal rank of $\Xi$. These correspond to the additional isolated singularities of \cite{podelski2023GaussEgt} (hence the notation).
\end{corollary}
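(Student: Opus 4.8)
The plan is to show that $\phi=\alpha\restr{\tilde W}$ is a local analytic isomorphism in a neighbourhood of $\Wtasing$, and then transport through it the local description of $\tilde W$ near $\Wtasing$ obtained in the Lemma above. The only point that is not formal is the identity $\hh^0(C,\BO_C(D))=1$ for $D\in\Wasing$, which rules out that $\BO_C(D)$ lies in $W^1_\ud(C)$ (i.e.\ that it is an honest higher multiplicity point of $\Theta$); this is where I expect the main work.

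Fix $x\in\Wtasing$ and put $D\coloneqq b(x)\in\Wasing$. Since $D\le R$ and $R$ is non-singular, $D$ is a non-singular divisor disjoint from the nodes of $C$; hence, by \ref{Lem: singularities of tNd}, $\tNd$ is smooth of dimension $g$ near $x$, $b$ is a local isomorphism there, and I may work with $D\in\Nd$ directly. For each $i$, the restriction $D\restr{N_i}\le R_i$ is a reduced sum of $d_i$ distinct ramification points of the degree two map $\pi_i\colon N_i\to\PP^1$; since $N_i$ is hyperelliptic (or elliptic, if $d_i=1$) of genus $d_i$ and $D\restr{N_i}$ contains no fibre of its $g^1_2$, the standard theory of linear systems on hyperelliptic curves gives $\hh^0(N_i,\BO_{N_i}(D\restr{N_i}))=1$, spanned by the canonical section, which does not vanish at $P_i^0$ or $P_i^\infty$. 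Consequently a non-zero global section of $\BO_C(D)$ cannot vanish identically on a component: vanishing on $N_i$ would force vanishing at the two adjacent nodes, hence (the restriction to each neighbouring component being a scalar multiple of the canonical section) on the neighbours, and so on around the cycle. Restriction to $N_1$ thus embeds $\HH^0(C,\BO_C(D))$ into $\HH^0(N_1,\BO_{N_1}(D\restr{N_1}))\cong\CC$, which proves $\hh^0(C,\BO_C(D))=1$.

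Because $D$ does not move, the differential of the Abel--Jacobi map $\alpha$ at $x$ is injective: its kernel is the image of the restriction $\HH^0(C,\BO_C(D))\to\HH^0(D,\BO_C(D)\restr{D})$, which vanishes since the canonical section of $\BO_C(D)$ vanishes along $D$. As $\tNd$ is smooth of dimension $g$ at $x$, this makes $\alpha$ an immersion at $x$, so $\alpha$ identifies a neighbourhood $U$ of $x$ with a locally closed smooth $g$-dimensional subvariety of $\PPP$. This subvariety lies in the irreducible $g$-dimensional $\overline{\Theta}$, hence is open in $\overline{\Theta}$ and contains $L\coloneqq\alpha(x)=\BO_C(D)$; so $\alpha$ is a local analytic isomorphism at $x$. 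Restricting to $\tilde W\subset\tNd$ --- over the locus of line bundles $\tilde W=\alpha^{-1}(P)$, and near $L$ one has $\Xi=\overline{\Theta}\cap P$ --- we conclude that $\phi$ is a local analytic isomorphism from a neighbourhood of $x$ in $\tilde W$ onto a neighbourhood of $L$ in $\Xi$.

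By the Lemma above, $(\tilde W,x)\simeq(V(x_1^2+\cdots+x_g^2),0)\subset(\AAA^g,0)$; via $\phi$, the germ $(\Xi,L)$ is isomorphic to the same germ, so $\Xi$ has at $L$ a quadratic singularity of maximal rank, and this singularity is isolated because $V(x_1^2+\cdots+x_g^2)$ is smooth away from the origin. Hence every point of $\Xiasing=\phi(\Wtasing)$ is an isolated singularity of maximal rank of $\Xi$. Finally, the additional isolated singularities exhibited in \cite{podelski2023GaussEgt} are exactly the line bundles $\BO_C(D)$ with $D$ a ``half'' of the ramification divisor $R$ compatible with $\delta$, i.e.\ $D\in\Wasing$; matching these descriptions --- and passing through the étale isogenies of Section~\ref{Sec: The bielliptic Prym locus} that relate $(P,\Xi)$ to the Prym variety studied there --- identifies them with $\Xiasing$.
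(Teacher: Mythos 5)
Your proof is correct and follows essentially the same route as the paper: establish $\hh^0(C,L)=1$ at the points of $\Xiasing$, deduce that $\phi$ is a local analytic isomorphism there, and transport the quadratic singularity of maximal rank of $\tilde{W}$ from the preceding lemma. The only difference is that you justify the two intermediate steps directly (geometric Riemann--Roch on the hyperelliptic components for $\hh^0=1$, and injectivity of $\dd\alpha$ plus a dimension count for the local isomorphism), where the paper instead cites the proofs of \ref{Prop: Martens Thoerem for singular bielliptic curves} and \ref{Prop: description of tNd}.
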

\begin{proof}
     For a line bundle $L\in \Xiasing$, we have $\hh^0(N_i,L\restr{N_i})=1$ for $1\leq i \leq n$ thus $\hh^0(C,L)=1$ by the proof of \ref{Prop: Martens Thoerem for singular bielliptic curves}. Thus $\phi:\tilde{W}\to \Xi$ is a local isomorphism near $L$ by \ref{Prop: description of tNd} and the result follows from the lemma above.
\end{proof}

\subsection{Chern-Mather class of the Prym theta divisor}
We keep the notations of the previous section. Let
\[ \Lambda_\Xi \subset T^\vee P =P\times \HH^0(N,\omega_N) \]
be the conormal variety to $\Xi$, and $\PP\Lambda_\Xi \subset P\times |\omega_N|$ the projectivization. Consider the following composite
\[ \mathscr{F}: |\omega_C| \dashrightarrow |\omega_C|^- \overset{\rho}{\longrightarrow} |\omega_N| \,, \]
where $|\omega_C|\dashrightarrow |\omega_C|^-$ is the projection from $R\in |\omega_C|$. We have the following
\begin{proposition}\label{Prop: Lambda Xi is pushforward of Lambda theta}
With the above notations, we have
\[ \PP\Lambda_\Xi=(\Id \times \mathscr{F})_\ast\left( \PP \Lambda_\Theta\restr{P} \right) \,.\]
\end{proposition}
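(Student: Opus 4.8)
The plan is to reduce the statement to a comparison of conormal varieties on an open dense subset and then use irreducibility. First I would record the relationship between the cotangent spaces: by \ref{Equ: Tvee P and Tvee JN is omegaN}, $T^\vee_0 P\simeq \HH^0(N,\omega_N)$, while $T^\vee_0 JC = \HH^0(C,\omega_C)$, and the inclusion $P\hookrightarrow \Pic^\ud(C)$ induces on cotangent spaces exactly the projection $\HH^0(C,\omega_C)\twoheadrightarrow \HH^0(N,\omega_N)$ dual to $\HH^0(N,\omega_N)^- = \HH^0(N,\omega_N)\hookrightarrow \HH^0(C,\omega_C)$, i.e.\ the quotient by the line $\langle s_R\rangle = \langle p^\ast s_E\rangle$ spanned by the ``$+1$''-eigenform. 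Projectivizing, this quotient map on $|\omega_C|$ is precisely the projection away from the point $R\in|\omega_C|$ onto $|\omega_C|^-\overset{\rho}{\simeq}|\omega_N|$, which is the map $\mathscr{F}$ in the statement. So on the level of ambient spaces, $(\Id\times\mathscr{F}): P\times|\omega_C| \dashrightarrow P\times|\omega_N|$ is the restriction to $P$ of the natural projectivized cotangent map $\PP T^\vee JC|_P \dashrightarrow \PP T^\vee P$.

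Next I would identify the conormal variety $\Lambda_\Xi$ with the image of $\Lambda_\Theta|_P$ over the smooth locus. Since $\Xi = \Theta\cap P$ is a hypersurface in $P$, at a point $L\in\Xi_{\sm}$ lying also in $\Theta_{\sm}$, the tangent space $T_L\Xi$ is the intersection $T_L P \cap T_L\Theta$ inside $T_L\Pic^\ud(C)$. Dually, the conormal line to $\Xi$ at $L$ in $T^\vee_L P$ is the image of the conormal line to $\Theta$ at $L$ in $T^\vee_L\Pic^\ud(C)$ under the surjection $T^\vee_L\Pic^\ud(C)\twoheadrightarrow T^\vee_L P$ — provided that conormal line is not killed, i.e.\ provided the conormal hyperplane to $\Theta$ does not contain the direction $\langle s_R\rangle$; equivalently, provided the canonical divisor cutting out the tangent hyperplane is not $R$, i.e.\ lies in $|\omega_C|\setminus|\omega_C|^-$ rather than being the exceptional point $R$ itself. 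Generically on $\Xi$ this holds: the relevant tangent hyperplane at a general point of $\Xi$ is a general-enough canonical divisor (one can invoke the Riemann Singularity Theorem \ref{Prop: Riemann Singularity Theorem fro reducible curves}, which identifies the tangent cone, together with \ref{Prop: Singular Locus of theta for bielliptic Curves} to see that a general point of $\Xi$ is a smooth point of both $\Theta$ and $P$ with tangent hyperplane not equal to $R$). Therefore $(\Id\times\mathscr{F})$ restricted to $\PP\Lambda_\Theta|_P$ is defined and agrees with $\PP\Lambda_\Xi$ over the dense open set lying above $\Xi_{\sm}\cap(\Theta_{\sm}\setminus\{\text{tangent}=R\})$.

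Finally I would conclude by irreducibility and dimension count. Both $\PP\Lambda_\Xi$ and $(\Id\times\mathscr{F})_\ast(\PP\Lambda_\Theta|_P)$ are closed subvarieties of $P\times|\omega_N|$: the former by definition (it is the closure of the conormal to the smooth locus), the latter because $\PP\Lambda_\Theta|_P$ is irreducible (being the preimage of the irreducible $\PP\Lambda_\Theta$, which is irreducible of dimension $g$ by \ref{Prop: Lambda Nd is irreducible of dim g} together with \ref{Prop: q ast Lambda theta = Lambda Nd}) and pushforward of an irreducible closed set under a proper map — here one should note $\mathscr{F}$ is a linear projection with a single base point $R$, so after the (automatic) resolution the pushforward is closed of the same dimension $g-1 = \dim\PP\Lambda_\Xi$. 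Having shown both are irreducible, of the same dimension, and equal on a common dense open subset, they are equal. The main obstacle I anticipate is the bookkeeping in the second paragraph: one must check carefully that the base point $R$ of $\mathscr{F}$ does not interfere, i.e.\ that $\PP\Lambda_\Theta|_P$ is not contained in the indeterminacy locus $P\times\{R\}$ — this is exactly the statement that $\Xi$ is not everywhere tangent-degenerate in the $s_R$-direction, which follows because $\Xi\ne\Theta\cap P$ would otherwise force $P\subset\Theta$, impossible since $\Xi$ is a genuine divisor in $P$. Once that is settled, everything else is the standard ``two irreducible closed sets agreeing on a dense open are equal'' argument, as in the proof of \ref{Prop: q ast Lambda theta = Lambda Nd}.
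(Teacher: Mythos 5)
Your proof is correct and follows essentially the same route as the paper: one shows that $\GG_\Xi=\mathscr{F}\circ\GG_\Theta$ over a dense open subset of $\Xi$ and then concludes by irreducibility of $\PP\Lambda_\Xi$ — the paper simply cites an external figure of \cite{podelski2023GaussEgt} for the Gauss-map compatibility, whereas you derive it directly from the duality between $T_LP\hookrightarrow T_L\Pic^\ud(C)$ and the quotient of $\HH^0(C,\omega_C)$ by $\langle s_R\rangle$, which is a welcome, self-contained substitute. The only slip is your justification that $\PP\Lambda_\Theta\restr{P}$ is irreducible: the intersection of the irreducible $\PP\Lambda_\Theta$ with the divisor $P\times|\omega_C|$ need not be irreducible, so what one really needs (and what the paper asserts in the form ``a general point of $\Lambda_\Theta\restr{\delta}$ lies above a smooth point of $\Xi$'') is that no component of $\PP\Lambda_\Theta\restr{P}$ is concentrated over $\Sing\Theta\cap P$ or over the locus where the tangent hyperplane degenerates; with that substitution your argument matches the paper's.
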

\begin{proof}
    Recall that we have a canonical identification $\PP T^\vee JC = JC\times |\omega_C|$. It follows from \cite[Fig. 4.24]{podelski2023GaussEgt} that for a smooth point $x\in \Xi$ we have
    \[ \GG_\Xi(x)=\mathscr{F}\circ \GG_\Theta(x)\,, \]
    where $\GG_\Xi:\Xi \dashrightarrow |\omega_N|$ and $\GG_\Theta:\Theta \dashrightarrow |\omega_C|$ are the respective Gauss maps. The proposition follows since a general point in $\Lambda_\Theta\restr{\delta}$ lies above a smooth point of $\Xi$, and $\Lambda_\Xi$ is irreducible.
\end{proof}

\begin{theorem}\label{Thm: Prym: Chern-Mather of Prym theta}
Suppose $\ud=(g)$ or $\ud=(1,g-1)$, then
\[ [\PP\Lambda_\Xi]= \sum_{r=0}^{g-1} h^r \frac{\xi^{g-r} }{(g-r)!}\binom{2g-2r-2}{g-r-1}\cap [T^\vee P]\in \HH_{2g}(T^\vee P,\QQ)\,, \]
where $h$ is the pullback of the hyperplane class in $\PP T^\vee_0 P$ and $\xi$ corresponds to the pullback of $\Xi$. In particular, the Chern-Mather classes of $\Lambda_\Xi$ are
\[ c_{M,r}\left(\Lambda_\Xi\right)= \frac{\xi^{g-r}}{(g-r)!} \binom{2g-2r-2}{g-r-1}\in \HH_{2r}(P,\QQ) \,. \]
\end{theorem}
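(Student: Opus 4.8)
The plan is to obtain the formula from the class of $\PP\Lambda_\Theta$ computed in Lemma \ref{Lem: BECurves: Chern-Mather of theta}, by restricting that class to $P$ and pushing it forward along $\mathscr{F}$, using Proposition \ref{Prop: Lambda Xi is pushforward of Lambda theta} as the bridge. By that proposition $[\PP\Lambda_\Xi]=(\Id\times\mathscr{F})_\ast[\PP\Lambda_\Theta|_P]$, so it suffices to (i) identify $[\PP\Lambda_\Theta|_P]$ in $\HH_\ast\!\left(P\times|\omega_C|,\QQ\right)$ and (ii) push this class forward along the rational map $\Id\times\mathscr{F}\colon P\times|\omega_C|\dashrightarrow P\times|\omega_N|$.

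For (i): since $P$ is a fibre of the norm map, $P\times|\omega_C|$ is a Cartier divisor not containing the irreducible $g$-dimensional variety $\PP\Lambda_\Theta$, so the intersection is pure of dimension $g-1$; that the restricted cycle is reduced along its unique component follows from $\Xi=\Theta\cap P$ being a reduced Cartier divisor on $P$, together with the description of $\Sing\Theta$ in Proposition \ref{Prop: Singular Locus of theta for bielliptic Curves}, which bounds the fibre dimensions of $\gamma_\Theta$ over $\Sing\Theta\cap P$. The hyperplane class $h$ on $|\omega_C|$ restricts to itself, and the class $\theta$ pulled back from the polarization of $JN$ restricts to $\xi$: exactly as in the last line of the proof of Lemma \ref{Lem: BECurves: Chern-Mather of theta}, $h'$ vanishes on $\Pic^\ud(C)$ and the $\theta$-coefficient of $[\overline\Theta]$ equals $1$ by Poincar\'e's formula, so $[\Theta]=\theta|_{\Pic^\ud(C)}$ and hence $\xi=[\Theta]|_P=\theta|_P$. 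This gives
\[ [\PP\Lambda_\Theta|_P]=\sum_{r=0}^{g} h^{r+1}\,\frac{\xi^{g-r}}{(g-r)!}\binom{2g-2r-2}{g-r-1}\cap\bigl[P\times|\omega_C|\bigr]. \]

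For (ii): $\mathscr{F}$ is the linear projection of $|\omega_C|\cong\PP^{g}$ away from the point $[R]=[s_R]$ onto $|\omega_C|^-\cong|\omega_N|\cong\PP^{g-1}$. I would resolve it by blowing up $P\times\{[R]\}$: the blow-up $\sigma\colon\Gamma\to P\times|\omega_C|$ carries a morphism $p\colon\Gamma\to P\times|\omega_N|$ which over $P$ is a $\PP^1$-bundle with $p^\ast h=\sigma^\ast h-e$, where $h$ now denotes the hyperplane class on $|\omega_N|$ and $e$ the exceptional divisor. The crucial geometric point is that $(L,H)\in\PP\Lambda_\Theta|_P$ with $H=[R]$ forces $L\in\Xiasing=\phi(\Wtasing)$ — indeed $\gamma_\Theta(L)=[R]$ precisely when $L=\BO_C(D)$ with $D\leq R$, i.e.\ $D\in\Wasing$ — and this is a finite set by Corollary \ref{Cor: points in Xiasing a isolated singularities of maximal rank}. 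Thus $\sigma$ is an isomorphism along $\PP\Lambda_\Theta|_P$ away from those finitely many points, so the strict transform of $\PP\Lambda_\Theta|_P$ agrees with $\sigma^\ast[\PP\Lambda_\Theta|_P]$ up to a cycle supported on the exceptional fibres over $\Xiasing$, and $[\PP\Lambda_\Xi]$ is obtained by applying $p_\ast$. Using the projective-bundle identity $p_\ast\bigl((\sigma^\ast h)^{\,j}\bigr)=h^{\,j-1}$ for $j\geq 1$, the main term $\sum_r h^{r+1}(\cdots)$ becomes $\sum_r h^{r}(\cdots)$ with the binomial coefficients unchanged; since $h^{r}=0$ for $r\geq g$ on $|\omega_N|$, this is exactly the asserted $[\PP\Lambda_\Xi]$, and the Chern--Mather statement follows by reading off $[\PP\Lambda_\Xi]=\sum_r h^r c_{M,r}(\Xi)$, where $c_{M,r}(\Xi)=q_\ast\bigl(h^{g-1-r}\cap[\PP\Lambda_\Xi]\bigr)$.

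The main obstacle is precisely the correction term in (ii) coming from the exceptional fibres over $\Xiasing$: one must show that it does not disturb the clean formula for the top-dimensional class $[\PP\Lambda_\Xi]$. I would handle this by first treating the general member of $\SE_\ud$, for which $\Xiasing=\varnothing$, so that $\PP\Lambda_\Theta|_P$ is disjoint from the blow-up centre and the computation above goes through verbatim, and then passing to arbitrary members of $\SE_\ud$ — either by showing $[\PP\Lambda_\Xi]$ is unaffected by the specialization, or by a direct local excess-intersection computation at each point of $\Xiasing$ using the normal form $V(x_1^2+\cdots+x_g^2)$ for the quadratic singularities of maximal rank supplied by Corollary \ref{Cor: points in Xiasing a isolated singularities of maximal rank}. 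The transversality and reducedness statements in step (i) are a secondary and more routine point.
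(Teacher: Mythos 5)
Your proposal is correct and is essentially the paper's own proof: both rest on \ref{Prop: Lambda Xi is pushforward of Lambda theta} and \ref{Lem: BECurves: Chern-Mather of theta}, the identification $\theta\restr{P}=\xi$, and the pushforward rule $\mathscr{F}_\ast h_C^{r+1}=h_N^r$ for the linear projection from $[R]$. Your extra care at the indeterminacy locus --- blowing up $P\times\{[R]\}$ and observing that $\PP\Lambda_\Theta\restr{P}$ meets the centre only over the finite set $\Xiasing$, so that any discrepancy between strict and total transform is confined to finitely many exceptional fibres and must then be shown not to perturb the class --- is a worthwhile elaboration of a step the paper's one-line proof takes for granted, but it is the same argument.
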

\begin{proof}
Follows from \ref{Prop: Lambda Xi is pushforward of Lambda theta} and \ref{Lem: BECurves: Chern-Mather of theta}, and the fact that
\[ \theta \cap [P]=\xi \,, \quad \text{and} \quad \mathscr{F}_\ast h_C^{r+1}=h_N^r\,,\]
where $h_C$ and $h_N$ are the hyperplane classes on $|\omega_C|$ and $|\omega_N|$ respectively.
\end{proof}

\subsection{The fibers of the Gauss map}
We will now study the fibers of the Gauss map $\gamma_\Xi:\PP\Lambda_\Xi\to |\omega_N|$ in the cases $\ud=(g)$ and $\ud=(1,g-1)$. The main result is the following:
\begin{theorem}\label{Thm: bielliptic Prym: fibers of Gauss map }
    Suppose $\ud=(g)$ or $\ud=(1,g-1)$, then away from a subset $S\subset |\omega_N|$ of codimension at least $3$, $\gamma_\Xi$ is finite.
\end{theorem}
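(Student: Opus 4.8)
The plan is to transfer the question about $\gamma_\Xi$ to a question about the maps studied in Section~\ref{Sec: The Abel-Jacobi Map}, and then to use the explicit dimension counts already available there. By \ref{Prop: Lambda Xi is pushforward of Lambda theta}, we have $\PP\Lambda_\Xi=(\Id\times\FF)_\ast(\PP\Lambda_\Theta|_P)$, and $\FF$ is essentially the identification $\rho:|\omega_C|^-\simto|\omega_N|$ composed with the projection from $R$. Since $R\in|\omega_C|$ is a fixed point not lying on the hyperplane $|\omega_C|^-$, the projection $|\omega_C|\dashrightarrow|\omega_C|^-$ is a linear projection from a point off the target, so it is an isomorphism on $|\omega_C|^-$ and has all other fibers of dimension $0$ over each point of $|\omega_C|^-$ away from $R$ itself — in fact each fiber is a line through $R$ minus $R$, i.e. affine-linearly an $\AAA^1$, but intersected with the relevant conormal locus. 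So the first step is: reduce finiteness of $\gamma_\Xi$ over a point $[\omega]\in|\omega_N|$ to finiteness of $\gamma_{\tNd}|_{\tilde W}=\phi$-conormal over the corresponding singular canonical divisor $H\in|\omega_C|^-$ with $\rho(H)=\divv\omega$, keeping careful track of the at most one extra point on the projection line that need not come from $|\omega_C|^-$.

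Second, I would invoke \ref{Prop: Fibers of Lambda Nd to omega N}, which gives a complete classification of the positive-dimensional fibers of $\gamma_{\tNd}:\PP\Lambda_{\tNd}\to|\omega_C|$ over $H\in|\omega_C|^-$: they occur exactly when (1) some component $s_i$ of $s$ vanishes identically, or (2) $P_i^0+P_j^\infty\le\divv s$ for some $i,j$. Restricting to $\tilde W$ (the fiber of $\Nm$ over $\delta$) and using \ref{Cor: fibers of Lambda tNd above s+0 cannot be line bundles} in case~(1) (for $\ud=(1,g-1)$; in the irreducible case $\ud=(g)$ case~(1) cannot occur since there is only one component), one sees that the bad locus is cut out by vanishing conditions on $\HH^0(N,\omega_N)$. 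The task then is to show that the set $S\subset|\omega_N|$ of $H$ for which either a component of $\divv H$ vanishes on some $N_i$, or $\divv H\ge P_i^0+P_j^\infty$ for some $i,j$, has codimension at least $3$. For $\ud=(g)$ there is one $N$, $\eta_N$ the hyperelliptic bundle, and the condition ``$\divv H\ge P^0+P^\infty$'' is the condition that $H$ lies in the image of a Brill–Noether-type sublinear system; since $P^0+P^\infty\sim\delta_N$ is a fixed effective divisor of degree $2$, the sections of $\omega_N$ vanishing on it form a codimension-$2$ subspace generically, but one must push to codimension $3$ by also excluding the locus where the projection picks up its extra point inside the bad set — this is where the ``$h'|_{JC}=0$'' phenomenon and a Martens-type argument (\ref{Prop: Martens Thoerem for singular bielliptic curves}) combine. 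For $\ud=(1,g-1)$ one analyzes separately the components $N_1$ (genus $1$, so $s|_{N_1}$ vanishing is codimension $1$, but then $s$ is forced onto $N_2$ and we use \ref{Cor: fibers of Lambda tNd above s+0 cannot be line bundles} together with the fact that such $H$ with an extra vanishing on $N_2$ is genuinely higher codimension) and $N_2$; again the condition $\divv H\ge P_i^0+P_j^\infty$ imposes two independent conditions, and one extra from the fiber analysis.

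The main obstacle I expect is the careful bookkeeping of the \emph{extra} point of the projection fiber: the map $\FF$ is not a morphism but a rational map, and $\gamma_\Xi$ is a pushforward, so a point of $\PP\Lambda_\Xi$ over $[\omega]$ may come either from a point of $\PP\Lambda_\Theta|_P$ lying over $H=\rho^{-1}(\divv\omega)$, or from a point lying over some other $H'$ on the line $\overline{R\,H}$. Controlling finiteness therefore requires showing that, away from codimension $3$, the only points of $\PP\Lambda_\Theta|_P$ on that whole projection line are the finitely many over $H$ itself. This forces a genuine geometric input beyond \ref{Prop: Fibers of Lambda Nd to omega N}, namely an analysis of how $\PP\Lambda_{\tNd}$ meets the pencil of canonical divisors through $R$; I would handle this by noting that a general member of such a pencil is non-singular, so by \ref{Prop: Fibers of Lambda Nd to omega N} again $\gamma_{\tNd}$ is finite there, and the non-general members that could contribute are themselves constrained to a locus of the required codimension — the one genuinely delicate case being when $R$ shares a point with $\beta^\ast H$, controlled via $\Wtasing$ and \ref{Cor: points in Xiasing a isolated singularities of maximal rank}. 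Everything else is a dimension count of linear subsystems of $|\omega_N|$, for which the two explicit values of $\ud$ make the Brill–Noether estimates from \ref{Prop: Martens Thoerem for singular bielliptic curves} and \ref{Prop: Singular Locus of theta for bielliptic Curves} directly applicable.
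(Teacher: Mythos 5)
Your reduction is pointed in the right direction --- the fiber of $\gamma_\Xi$ over $M\in|\omega_N|$ is governed by all of $\PP\Lambda^\ast_{\tNd}\cap\Nm^{-1}(\delta)$ lying over the entire pencil $V_M=\langle H,R\rangle\subset|\omega_C|$ --- but your plan for controlling that pencil has a genuine gap. You propose to handle it by observing that a general member of $V_M$ is non-singular, so that $\gamma_{\tNd}$ is finite over it by \ref{Prop: Fibers of Lambda Nd to omega N}, and that the ``extra'' contribution is at most one point per line. This does not work: $Z_M\coloneqq\PP\Lambda^\ast_{\tNd}|_{V_M}$ is the restriction of a $g$-dimensional variety to a line in the $g$-dimensional space $|\omega_C|$, hence is a \emph{curve}, and finiteness of $\gamma_{\tNd}$ over each individual point of $V_M$ says nothing about whether a whole component of that curve is contained in $\Nm^{-1}(\delta)$ --- which is exactly what would produce a positive-dimensional fiber of $\gamma_\Xi$ over $M$. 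Ruling this out is the heart of the paper's proof (its ``Step 2''): one pushes such a component $Z$ forward to a curve $Y\subset|\delta|$, uses the multiplication map $|\delta|\times|\delta|\to|\Delta|$ (bidegree $(1,1)$ via the Segre embedding) to prove $\deg[Y]=\deg(Z\to V_M)/2$, shows that $\deg(Z\to V_M)\le 4$ away from a codimension-$3$ locus, concludes $Y\simeq\PP^1$, and then derives a contradiction from Riemann--Hurwitz because the involution $\omega\tau$ acts freely on $Z$ (using $L+\tau L=\pi^\ast\delta$) while $\omega$ is generically unramified, forcing $Z\to\PP^1$ to be an \'etale double-cover tower. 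None of this machinery, nor any substitute for it, appears in your proposal.

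There is a second, smaller error: for the components of $Z_M$ that are \emph{not} finite over $V_M$ (the genuine positive-dimensional fibers of $\gamma_{\tNd}$, case (2) of \ref{Prop: Fibers of Lambda Nd to omega N}), you try to show that the locus $\{M : \divv s\ge P_i^0+P_j^\infty\}$ has codimension $3$ in $|\omega_N|$. It does not --- imposing vanishing at two points on $\HH^0(N,\omega_N)$ is only codimension $2$ here (since $P^0+P^\infty$ does not move in the $g^1_2$). The correct observation, which the paper uses, is that such a component is contained in a fiber $\PPN|_D\simeq\PP^1$ of the $\PP^1$-bundle, on which $\Nm$ has degree $2$ and is therefore non-constant; hence these components never lie entirely in $\Nm^{-1}(\delta)$ and contribute nothing, irrespective of the codimension of the locus where they occur. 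Your treatment of case (1) for $\ud=(1,g-1)$ (splitting into $s|_{N_2}=0$, killed by \ref{Cor: fibers of Lambda tNd above s+0 cannot be line bundles}, versus $s|_{N_1}=0$, which must be pushed to the codimension-$3$ locus $|\omega_{N_2}(-P_2^0-P_2^\infty)|$) is essentially right in outline, but the two gaps above mean the proposal as written does not prove the theorem.
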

We fix the following notations
\begin{align*}
    M&=[s_1+\dots+s_n]\in |\omega_N|=\PP(\oplus_i \HH^0(N_i,\omega_{N_i}))\,, & H&=\rho^{-1}(M) \\
    V_M&=\mathscr{F}^{-1}(M)=\langle H , R \rangle \subset |\omega_C|\,, & Z_M&=\Lambda^\ast_\tNd\restr{V_M}\,.
\end{align*}
From \ref{Prop: q ast Lambda theta = Lambda Nd} and \ref{Prop: Lambda Xi is pushforward of Lambda theta} we have 
\[ \PP\Lambda_\Xi = (\alpha\times \mathscr{F})_\ast (\PP \Lambda^\ast_\tNd \cap \Nm^{-1}(\delta)) \,,\]
thus positive-dimensional fibers of $\gamma_\Xi$ above $M$ correspond to components $Z$ of $Z_M$ such that $\Nm(Z)=\delta$.

\subsubsection*{Step 1: The case of components not finite onto $V_M$.}
Suppose that there is a component $Z$ of $Z_M$ that is not finite onto $V_M$, such that $\Nm(Z)=\delta$. By \ref{Prop: Fibers of Lambda Nd to omega N} we have $\gamma_\tNd(Z)= H$. Suppose that we are in the second case of Prop. \ref{Prop: Fibers of Lambda Nd to omega N}. Then $Z\subset \PPN\restr{D}\times\{H\}\simeq \PP^1$ for some $D$. Then the norm map restricted to $\PPN\restr{D}$ is of degree $2$ thus only finitely many points lie above $\delta$, contradicting $\Nm(Z)=\delta$. \\ 
Suppose now that we are in the first case of Prop. \ref{Prop: Fibers of Lambda Nd to omega N}. Then necessarily we must be in the case $\ud=(1,g-1)$. Suppose $M=[s_1+s_2]$. We thus have either $s_1=0$ or $s_2=0$. If $s_2=0$, then by \ref{Cor: fibers of Lambda tNd above s+0 cannot be line bundles} we have $\Nm(Z)\subset\{0,\infty\}$ which contradicts $\Nm(Z)=\delta$. We now assume $s_1=0$. Let $H_2\coloneqq H\restr{N_2}=\divv s_2+P_2^0+P_2^\infty$. By \ref{Prop: Fibers of Lambda Nd to omega N}, we have
\[ b'(Z) \subset N_1\times \{ D_2\}\times \{ H\} \]
for some $D_2\leq H_2$. Suppose first that $P_2^0+P_2^\infty \nleq D_2$. Then $b':Z\to b'(Z)$ is generically finite by \ref{Prop: positive-dimensional fibers of b'} (thus finite) and for a general point $x\in Z$, we have
\[ \Nm(x)=\Nm(b'(x))\neq \delta \]
which is a contradiction. We thus have $P_2^0+P_2^\infty \leq D_2$. Consider $Y=\omega_\Lambda(Z)$. Then by \ref{Equ: identities verified by omega Lambda} we have $\gamma_\tNd(Y)=\gamma_\tNd(Z)=H$ and $\Nm(Y)=\delta$ thus $Y$ is a positive-dimensional fiber of $\gamma_\tNd$. We have
\[ b'(Y)=N_1\times \{H_2-D_2\} \,,\]
and by the above reasoning applied to $Y$ we have
\[ P_2^0+P_2^\infty \leq H_2-D_2 \,.\]
Thus we must have $M\in |\omega_{N_2}(-P_2^0-P_2^\infty)|\subset |\omega_N|$ which is of codimension $3$.

\subsubsection*{Step 2: The case of components finite onto $V_M$.}
Let $Z$ be the union of all positive-dimensional components of $\PP\Lambda_\tNd^\ast\restr{V_M}$ that are finite above $V_M$, and are mapped to $\delta$ by $\Nm$. Note that if $\ns_\delta(C/E)=0$, $Z$ is empty because by assumption no subdivisor of $R$ lies above $\delta$. This section is thus relevant only in the case $\ns_\delta(C,E)>0$. We use the notations of Fig. \ref{Tikzcd: diagram defining Lambda tNd}. Let $\pi_\ast:\Nd \to E_\ud$ be the pushforward of points. Let $Y\coloneqq \pi_\ast \circ b \circ \tilde{p}(Z)$. As a general divisor in $V_M$ is non-singular, so is a general divisor in $Y$ and we thus have en embedding
\[ j:Y\hookrightarrow |\delta|=\PP \HH^0(E,\delta) \,. \]
The involutions $\omega_\Lambda$ and $\tau_\Lambda$ from \ref{Definition of omega Lambda} and \ref{Equ: Definition of tau lambda} induce involutions on $Z$ and $Y$, which we denote by $\omega$ and $\tau$ by abuse of notation. The action of $\tau$ on $|\omega_C|$ induces an involution on $V_M$ as well which we denote by $\tau$. We thus have the following commutative diagram
    \[
    \begin{tikzcd}
       & Z \arrow[r] \arrow[d] & V_M \simeq \PP^1 \arrow[d] \\
    {|\delta |} &     Y \arrow[r] \arrow[l,hook',"j"] \arrow[d,hook,"{(j,j\circ \omega)}"] & {V_M/\tau \simeq \PP^1 }  \arrow[d,hook,"i"]  \\
     & {|\delta|\times |\delta|} \arrow[r,"m"] & {|\Delta|}
    \end{tikzcd}\,,
    \]
where $m:|\delta|\times|\delta|\to |\Delta|$ is the multiplication map. We then have the following:
\begin{lemma}\label{Lem: degree of Y compared to deg Z to V}
        \[ \deg [Y] = \deg( Z\to V_M)/2 \,,\]
    where $\deg [Y]=\deg [Y]\cap c_1(\BO_{|\delta|}(1))$.
\end{lemma}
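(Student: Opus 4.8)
The plan is to compute the degree of the morphism $f\colon Y\to V_M/\tau$ from the diagram above in two ways. Put $d\coloneqq\deg(Z\to V_M)$; the two computations will give $\deg f=2\deg[Y]$ and $\deg f=d$, and comparing them yields $2\deg[Y]=d$, which is the claim. Throughout one may work component by component, since the components of $Z$ are by definition curves finite (hence surjective) over $V_M\simeq\PP^1$, so $Z\to V_M$ and $f$ are finite with well-defined degrees.

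For the first value I would show that $i\colon V_M/\tau\hookrightarrow|\Delta|$ is a \emph{linear} embedding. Write $V_M=\langle H,R\rangle$ and pick sections $s_H,s_R$ of $\omega_C$ with $\divv s_H=H\in|\omega_C|^-$ and $\divv s_R=R$; then $\tau^\ast s_H=-s_H$ and $\tau^\ast s_R=s_R$, hence
\[ \Nm(\lambda s_H+\mu s_R)=(\lambda s_H+\mu s_R)\cdot\tau^\ast(\lambda s_H+\mu s_R)=\mu^2 s_R^2-\lambda^2 s_H^2\,.\]
Thus the map $V_M\to|\Delta|$ depends only on $[\lambda^2:\mu^2]$ and factors as the degree-$2$ quotient $V_M\to V_M/\tau$ followed by a linear embedding, so $i^\ast\BO_{|\Delta|}(1)=\BO_{V_M/\tau}(1)$. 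Since $m$ is induced by multiplication of sections, $m^\ast\BO_{|\Delta|}(1)=\BO_{|\delta|}(1)\boxtimes\BO_{|\delta|}(1)$, and commutativity of the bottom square then gives $f^\ast\BO_{V_M/\tau}(1)=j^\ast\BO_{|\delta|}(1)\otimes\omega^\ast j^\ast\BO_{|\delta|}(1)$ on $Y$. As $\omega$ is an automorphism of $Y$, the two tensor factors have the same degree $\deg[Y]$ on $Y$, so $\deg f=2\deg[Y]$.

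For the second value I would count the fibre of $f$ over a general $\bar H'\in V_M/\tau$, lifted to $H'\in V_M$ with associated canonical divisor $\hat K'\subset C$. The key point is that $\hat K'$ may be taken reduced, disjoint from the nodes, with $\pi_\ast\hat K'$ reduced as well: the distinguished member $R\in V_M$ is reduced and misses the nodes and $\pi_\ast R=\Delta$ is reduced, and these are open conditions on $V_M$ (resp.\ $V_M/\tau$). Then $\pi$ is injective on $\hat K'$, so the $d$ points of $Z$ over $H'$ — each determined by its subdivisor $\divv s_1\le\hat K'$, which in turn fixes $L$, $[s_1]$ and $[s_2]$ — are sent by $Z\to Y$ to $d$ \emph{distinct} divisors of $|\delta|$. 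The $d$ points of $Z$ over $\tau H'\neq H'$ are the $\tau_\Lambda$-translates of these, and since $\pi\circ\tau=\pi$ the composite $\pi_\ast\circ b\circ\tilde{p}$ is $\tau_\Lambda$-invariant, so they hit the same $d$ divisors. Hence $f^{-1}(\bar H')$ is exactly this set of $d$ points and $\deg f=d$.

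I expect the main obstacle to be the genericity used in the third paragraph: that a general member of the pencil $V_M=\langle H,R\rangle$, and of its image line $i(V_M/\tau)\subset|\Delta|$, is reduced and avoids the nodes. This should be handled by reducing to the corresponding elementary statements for the single members $R$ and $\Delta=\pi_\ast R$; everything else (including the fact that no component of $Z$ is contracted by $\gamma_\tNd$, which is immediate since these components are finite over $V_M$) is routine bookkeeping. Comparing the two values of $\deg f$ gives $2\deg[Y]=d=\deg(Z\to V_M)$.
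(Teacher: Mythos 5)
Your proof is correct and follows essentially the same route as the paper: you compute $\deg(Y\to V_M/\tau)$ once via the bidegree-$(1,1)$ multiplication map $m$ and the linearity of $i(V_M/\tau)\subset|\Delta|$ (giving $2\deg[Y]$), and once by identifying it with $\deg(Z\to V_M)$ through the generically $2{:}1$ maps $Z\to Y$ and $V_M\to V_M/\tau$. The only difference is that you justify the second identification by an explicit fibre count where the paper simply asserts both quotient maps are generically of degree $2$; that is a harmless (and welcome) elaboration.
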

\begin{proof}
  The morphism $Z\to Y$ and $V_M\to V_M/\tau$ are generically of degree $2$, thus
    \[ \deg(Z\to V_M)=\deg(Y\to V_M/\tau) \eqcolon k \,. \]
    By definition we have
    \[V_M=\{ \divv (\lambda s_{R}+\mu \pi ^\ast s)\,|\, (\lambda:\mu)\in \PP^1 \} \subset |\omega_C|\,,\]
    where $\divv s_R=R$ and $M=[s]$. Thus 
    \[ i_\ast V_M/\tau = \{ \divv( \lambda s_{\Delta}+\mu s^2) \,|\, (\lambda,\mu)\in \PP^1 \}\subset |\Delta| \,, \]
    where $\divv s_\Delta=\Delta$. Thus $i_\ast [V_M/\tau]$ is of degree $1$. The multiplication map $m$ is the composition of the Segre embedding and a linear projection, it is thus of bidegree $(1,1)$. If $d=\deg j_\ast[Y]$, then $(j,j\circ\omega)$ is of bidegree $(d,d)$, thus
    \[ k=2d \,. \]
\end{proof}
\begin{lemma}\label{Lem: Case Eg0: locus S such that deg Z to V <=4}
    There is a closed set $S\subset |\omega_N|$ of codimension at least $3$, such that for all $M\in |\omega_N|\setminus S$, we have 
    \[ \deg(Z\to V_M)\leq 4 \,, \]
    where $Z$ is the union of all components of $\Lambda^\ast_\tNd\restr{M}$ that are finite onto $V_M$ and mapped to $\delta$ by $\Nm$.
\end{lemma}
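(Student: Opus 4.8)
By Lemma~\ref{Lem: degree of Y compared to deg Z to V} we have $\deg[Y]=\deg(Z\to V_M)/2$, so it is equivalent to produce a closed $S\subset|\omega_N|$ of codimension $\ge 3$ with $\deg[Y]\le 2$ for $M\notin S$. The plan is to make the curve $Y\subset|\delta|$ completely explicit. Writing $M=[s]$ with $s\in\HH^0(N,\omega_N)=\HH^0(C,\omega_C)^-$ and using $\tau^\ast s_R=s_R$, $\tau^\ast s=-s$, the cross term in the (quadratic) norm map on sections drops out, and one gets
\[ \Nm(\lambda s_R+\mu s)=\lambda^2\,\Nm(s_R)-\mu^2\,s\cdot\tau^\ast s=\lambda^2 s_\Delta+\mu^2\bar s^{\,2}, \]
where $s_\Delta$ is the section of $\delta^{\otimes 2}$ cutting out $\Delta=\pi_\ast R$ and $\bar s\in\HH^0(E,\delta)$ is the section corresponding to $s$ under $\HH^0(N,\omega_N)\cong\HH^0(E,\delta)$ (coming from $\pi_\ast\BO_C=\BO_E\oplus\delta^{-1}$ and $\omega_E=\BO_E$); that $s\cdot\tau^\ast s$ is a perfect square uses $\omega_C^{\otimes 2}=\pi^\ast\delta^{\otimes 2}$. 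Thus $\ell:=i_\ast(V_M/\tau)$ is the pencil $\{\divv(\lambda^2 s_\Delta+\mu^2\bar s^{\,2})\}\subset|\Delta|$, a line through the two decomposable divisors $\Delta$ and $2\divv\bar s$. Via the diagram preceding Lemma~\ref{Lem: degree of Y compared to deg Z to V}, a point of $Y$ records a factorisation $\lambda^2 s_\Delta+\mu^2\bar s^{\,2}=\sigma_1\sigma_2$ with $\sigma_i\in\HH^0(E,\delta)$, and by Proposition~\ref{Prop: Fibers of Lambda Nd to omega N} the hypothesis that the components of $Z$ are finite over $V_M$ ensures there are no further contributions.

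The key observation is then that $Y$ is positive-dimensional only if \emph{every} member of $\ell$ is decomposable, i.e. $\ell$ lies in the image of the multiplication map $m:|\delta|\times|\delta|\to|\Delta|$: an irreducible component of $Z$ that is finite and dominant over $V_M$ and lies in $\Nm^{-1}(\delta)$ produces such a factorisation over every $K\in V_M$. (If $\ns_\delta(C/E)=0$ already $\Delta$ is indecomposable, so $Z=\varnothing$ and there is nothing to prove.) Since $\ell$ passes through $\Delta$ and $2\divv\bar s$, the condition $\ell\subset m(|\delta|\times|\delta|)$ forces $s_\Delta\in\Sym^2 V_2$ for a $2$-dimensional subspace $V_2\subset\HH^0(E,\delta)$ with $\bar s\in V_2$; such $V_2$ correspond to the finitely many factorisations $\Delta=\divv\sigma_1+\divv\sigma_2$, and $\bar s\in V_2$ says precisely that $\divv\bar s$ lies on the line $\PP V_2\subset|\delta|\cong|\omega_N|$. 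For such $M$ every factorisation of $\lambda^2 s_\Delta+\mu^2\bar s^{\,2}$ takes place inside $V_2$, so $Y\subset\PP V_2$ lies on a line, $\deg[Y]\le 1$, and $\deg(Z\to V_M)\le 2$. More generally $\deg(Z\to V_M)=2\cdot\#\{V_2\text{ active at }M\}$.

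Hence $S$ should be the locus where at least three of the lines $\PP V_2$ pass through $\divv\bar s$. To see this has codimension $\ge 3$ one argues that two planes $V_2,V_2'$ arising from distinct factorisations of $\Delta$ meet in $0$ for our double cover (they span a $4$-space once $g\ge 4$), so the lines $\PP V_2$ are pairwise disjoint in $|\omega_N|=\PP^{g-1}$ and no $M$ lies on $\ge 2$ of them; for special $\delta$ the pairwise intersections are isolated points of codimension $g-1\ge 3$, and the locus meeting $\ge 3$ of the lines is contained in this finite set. One must also check that no extra finite components of $\PP\Lambda_\tNd^\ast\restr{V_M}\cap\Nm^{-1}(\delta)$ arise over the two distinguished members $K=R$, $K=H$ of the pencil beyond the $\#V_2$ components already found, which again follows from the factorisation picture since a finite component must be dominant and hence visible over a general $K$. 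The case $\ud=(1,g-1)$ is handled identically, the only change being that $E$ is a $2$-cycle and $\HH^0(N,\omega_N)=\HH^0(N_1,\omega_{N_1})\oplus\HH^0(N_2,\omega_{N_2})$; the norm computation, the line $\ell$, and the factorisation analysis go through verbatim.

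\textbf{Main obstacle.} The crux is the last paragraph: enumerating all degenerations of the configuration $(\Delta,\divv\bar s,R,H)$ that could force $\deg[Y]\ge 3$ and verifying that each is cut out by at least three independent conditions on $M$ — in particular the transversality statement that the planes $V_2$ arising from distinct factorisations of $\Delta$ are in general position, and the bookkeeping of components over the two special members of the pencil.
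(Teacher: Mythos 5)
Your overall strategy is genuinely different from the paper's. The paper does not attempt to classify the positive-dimensional components of $Z$ at all: it simply bounds the fiber of $\Lambda^\ast_\tNd\restr{V_M}\cap \Nm^{-1}(\delta)$ over the single distinguished point $H=\rho^{-1}(M)\in V_M$, observes that such points come in pairs under $\omega_\Lambda$ (using $\tau^\ast L=\pi^\ast\delta-L=\omega_C\otimes L^{-1}$ for $L\in P$), and notes that each pair beyond the first two costs one codimension condition on $M$ — concretely, an extra pair over $H$ is an extra splitting $2\divv\bar s=D_1+D_2$ with $D_i\in|\delta|$, and each such splitting is one condition on the zeros of $\bar s$ since $\Pic^0(E)\simeq\CC^\ast$ is one-dimensional. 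This is cruder (it bounds the fiber over one point rather than identifying which splittings propagate over all of $\ell$) but it is short and suffices for the stated bound. Your computation of the pencil $\ell=i_\ast(V_M/\tau)=\{\divv(\lambda^2 s_\Delta+\mu^2\bar s^{\,2})\}$ and the observation that a component finite and dominant over $V_M$ forces every member of $\ell$ to be decomposable are both correct and consistent with the paper's own proof of Lemma~\ref{Lem: degree of Y compared to deg Z to V}.

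However, there is a genuine gap at the step you rely on for the actual bound: the claim that $\ell\subset m(|\delta|\times|\delta|)$ forces $s_\Delta\in\Sym^2V_2$ for a $2$-plane $V_2\ni\bar s$, that \emph{every} factorisation of every member of $\ell$ then takes place inside such a $V_2$, and hence that $\deg(Z\to V_M)=2\cdot\#\{V_2\}$. This is asserted, not proven, and it is essentially the whole content of the lemma. The multiplication map $\Sym^2\HH^0(E,\delta)\to\HH^0(E,\delta^{\otimes 2})$ has a large kernel (the source has dimension $g(g+1)/2$, the target $2g$), so a pencil contained in the decomposable locus need not arise from a single $2$-plane, and even when some $V_2$ exists nothing you say excludes additional components of $m^{-1}(\ell)$ whose image $Y_0\subset|\delta|$ is a curve of degree $\ge 3$ rather than a line; a single such component already violates your count. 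Equivalently: your identity $\deg(Z\to V_M)=2\cdot\#\{V_2\text{ active}\}$ needs an upper-bound argument showing all components are "linear", and that is exactly the hard part you defer to the "main obstacle" paragraph. Until that is supplied (or replaced by the paper's fiber-over-$H$ count, which sidesteps the classification entirely), the proof is incomplete. The concluding codimension count (pairwise disjointness / general position of the lines $\PP V_2$, and the treatment of the special members $R$ and $H$ of the pencil) is also only sketched, but those are secondary to the missing classification.
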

\begin{proof}
    For every $(L,[s_1],[s_2])\in Z\restr{H}\subset\Lambda^\ast_\tNd\restr{H}$ such that $\Nm(L)=\delta$, we have $\tau (L)=\omega(L)$ and
    \[ \Nm(\omega_C\otimes L^{-1})=\delta \,. \]
    Thus points above $H$ that map to $\delta$ come in pairs. It is not complicated to see that having $3$ such pairs above $H$ imposes a condition of codimension $3$ on $M$.
\end{proof}
We can now complete the proof of Theorem \ref{Thm: bielliptic Prym: fibers of Gauss map }. By the above lemma, away from a set $S$ of codimension at least $3$, we have $\deg(Z \to V_M)\leq 4$. By \ref{Lem: degree of Y compared to deg Z to V} we then have $\deg [Y]=\deg(Z \to V_M)/2 =2$ thus $Y\simeq \PP^1$ is a rational curve. Recall that $\omega$ and $\tau$ commute. Consider the following tower of double coverings of curves   
\[ \begin{tikzcd}
        & Z \arrow[dl,"\pi_\omega "'] \arrow[d,"\pi_\tau"] \arrow[dr,"\pi_{\omega\tau}"] & \\
        Y_\omega  \arrow[dr,"p_\omega" '] & Y \arrow[d,"p_\tau"] & Y_{\omega \tau} \arrow[dl,"p_{\omega\tau}"] \\
         & \PP^1 &
    \end{tikzcd}\,, \]
    where $Y_\omega$ (resp. $Y_{\omega \tau}$) is $Z/\omega$ (resp. $Z/\omega\tau$). Since $Y\simeq \PP^1$, the lower curve has to be $\PP^1$. The fixed points of $\omega$ correspond to theta-nulls. Moreover $\omega$ doesn't fix the points in $Z_M$ above $R\in V_M$. Thus away from a finite locus in $|\omega_N|$ we can assume that $p_\omega$ is étale. For all $L\in P$ we have $L+\tau L=\pi^\ast \delta$, thus
    \begin{align*}
        \omega_C- \tau L=L+\omega_C-\pi^\ast \delta \neq L \,.
    \end{align*}
    Thus $\omega \tau$ acts fixed point free on $Z$. By the above diagram this implies that $Y\to \PP^1$ is étale which is impossible by Riemann-Hurwitz.

\subsection{The characteristic cycle}
Let $j:\Xi_\sm \hookrightarrow \Xi$ be the embedding and $\IC_\Xi\coloneqq j_{!\ast} \CC_{\Xi_\sm}[g-1]\in \mathrm{Perv(P)}$ be the intersection complex associated to $\Xi$. We now compute the characteristic cycle $\mathrm{CC}(\IC_\Xi)$ for $\ud=(g)$ and $\ud=(1,g-1)$. The proof is inspired from Bressler and Brylinski's proof of the irreducibility of the characteristic cycle of the theta divisor of non-hyperelliptic Jacobians \cite{BresslerBrylinski97}. Recall that the restriction of the Abel-Jacobi map $\alpha:\tNd\to \overline{\Theta}$ induces a map
\[ \phi \coloneqq \alpha\restr{\tilde{W}}:\tilde{W} \to \Xi \,. \] 
Let $\tilde{W}^o\coloneqq \tilde{W}\setminus \Wtasing$, $\Xi^o\coloneqq \Xi\setminus \Xiasing$, and $\phi^o:\tilde{W}^o\to \Xi^o$ be the restriction. By \ref{Cor: tilde W is smooth}, $\tilde{W}^o$ is smooth if $\ud=(g)$ or $\ud=(1,g-1)$. Moreover a general line bundle $L\in \Xi$ verifies $\hh^0(C,L)=1$, thus $\phi$ is birational by \ref{Prop: description of tNd}.
We start with the following:
\begin{lemma}\label{Lem: Kernel of dd alpha in singular case}
    Suppose $\ud=(g)$ or $\ud=(1,g-1)$, and $D\in \tilde{W}^o$, then
    \[ \dim \Ker( {}^t \dd_D \phi ) \leq \hh^0(N,\omega_N(-\beta^\ast D))+1 \,. \]
    If moreover $D$ is non-singular, then
    \[ \Ker({}^t \dd \phi)=\left(\HH^0(C,\omega_C(-D))+\langle s_R \rangle \right)\cap \HH^0(N,\omega_N) \,. \]
\end{lemma}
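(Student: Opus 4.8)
The plan is to analyze the differential of $\phi = \alpha|_{\tilde W}$ at a point $D \in \tilde W^o$ by comparing it with the differential of the Abel–Jacobi map $\alpha$ on all of $\tilde N_{\underline d}$, and then transposing. First I would recall that $\tilde W \subset \tilde N_{\underline d}$ is cut out by the condition $\Nm \circ \alpha = \delta$, so that at a point $D$ the tangent space $T_D \tilde W$ is the kernel of the composite $d_D(\Nm \circ \alpha) : T_D \tilde N_{\underline d} \to T_\delta \Pic^{\underline d}(E)$, and (since $D \notin \Sing(\tilde W)$ by hypothesis and \ref{Cor: tilde W is smooth}) this tangent space has the expected dimension $g$. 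Dualizing, the cotangent space $T^\vee_D \tilde W$ is a quotient of $T^\vee_D \tilde N_{\underline d}$, and $^t d_D \phi$ is obtained from $^t d_D \alpha : \HH^0(N,\omega_N) = T^\vee_0 P \to T^\vee_D \tilde N_{\underline d}$ by post-composing with this quotient, i.e. ${}^t d_D \phi$ is the composite $T^\vee_0 P \xrightarrow{{}^t d_D \alpha} T^\vee_D \tilde N_{\underline d} \twoheadrightarrow T^\vee_D \tilde W$. (Here I use the identification \ref{Equ: Tvee P and Tvee JN is omegaN}.) Its kernel therefore consists of those cotangent vectors $\omega \in \HH^0(N,\omega_N)$ whose image under ${}^t d_D \alpha$ lies in the conormal line $N^\vee_{D}(\tilde W \subset \tilde N_{\underline d})$, which is spanned by $^t d_D(\Nm\circ\alpha)(s_E)$ — concretely, by $s_R = p^\ast s_E$ viewed inside $\HH^0(C,\omega_C) = \HH^0(N,\omega_N)\oplus\langle s_R\rangle$.

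Next I would use the classical description of the transpose of the Abel–Jacobi differential: for the symmetric-product Abel map of a smooth curve, $^t d_D \alpha$ at a divisor $D$ is (dual to) the evaluation map $\HH^0(\omega) \to (\text{fibre at } D)$, so $\Ker({}^t d_D \alpha) = \HH^0(C,\omega_C(-D))$ in the smooth case; this is exactly the content cited from \cite[246]{arbarello} and encoded in \ref{Prop: description of Lambda tNd}. Since here $\alpha$ factors through the blowup $b$ and through $\PPP$, but $D\in\tilde W^o$ is such that $b$ is a local isomorphism near the relevant locus (Lemma before \ref{Cor: tilde W is smooth}), the same local computation applies and, working on $C$ rather than on $N$, gives $\Ker({}^t d_D\alpha|_{\HH^0(C,\omega_C)}) = \HH^0(C,\omega_C(-D))$ when $D$ is non-singular. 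Combining with the previous paragraph: $\omega\in\HH^0(N,\omega_N)$ lies in $\Ker({}^t d_D\phi)$ iff ${}^t d_D\alpha(\omega)\in \langle\,{}^td_D\alpha(s_R)\,\rangle$ inside $T^\vee_D\tilde N_{\underline d}$, i.e. iff $\omega - \lambda s_R \in \HH^0(C,\omega_C(-D))$ for some scalar $\lambda$; that is,
\[
\Ker({}^t d_D\phi) = \bigl(\HH^0(C,\omega_C(-D)) + \langle s_R\rangle\bigr)\cap \HH^0(N,\omega_N),
\]
which is the asserted formula in the non-singular case.

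For the general bound I would drop the non-singularity hypothesis and only use that $D\in\tilde W^o$, so $b$ is still a local isomorphism near $D$ (as $\Wtasing$ has been removed) but $D$ may be a node-bearing divisor, so $\tilde N_{\underline d}$ itself can be singular at $D$ with local model $V(x_1\cdots x_k - x_{k+1}\cdots x_{k+l})$ by \ref{Lem: singularities of tNd}. The point is that $\dim\Ker({}^td_D\phi)$ can only increase by at most the "defect" coming from $\alpha$ not being an immersion and from the extra generator $s_R$: from the two displayed inclusions one gets $\dim\Ker({}^t d_D\phi) \le \dim\bigl(\HH^0(C,\omega_C(-D))\cap\HH^0(N,\omega_N)\bigr) + 1$, and $\HH^0(C,\omega_C(-D))\cap\HH^0(N,\omega_N) = \HH^0(N,\omega_N(-\beta^\ast D))$ since sections coming from $\HH^0(N,\omega_N)$ are exactly those vanishing at the nodes, and imposing vanishing along $\beta^\ast D$ on such a section is the same as imposing it on $C$. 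Hence $\dim\Ker({}^t d_D\phi)\le \hh^0(N,\omega_N(-\beta^\ast D))+1$. The main obstacle I anticipate is the bookkeeping at singular $D$: one must check carefully that, despite $\tilde N_{\underline d}$ being singular there, the Zariski cotangent space of $\tilde W$ at $D$ is still a quotient of that of $\tilde N_{\underline d}$ of the expected corank one (using $D\notin\Sing(\tilde W)$), so that the transpose of $d_D\phi$ genuinely factors as claimed and the "$+1$" is the only loss. The smooth-curve input and \ref{Prop: description of Lambda tNd} handle the non-singular case cleanly; everything else is linear algebra with the residue sequence \ref{Equ: sections of omega C are section of omega N whose residue is zero}.
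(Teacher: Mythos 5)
Your treatment of the non-singular case is essentially the paper's own argument: you identify the conormal line of $\tilde{W}$ in $\tNd$ at $D$ with $\langle {}^t\dd_D\alpha(s_R)\rangle$ (the image of $s_E$ under ${}^t\dd(\Nm\circ\alpha)$), invoke the classical computation $\Ker({}^t\dd_D\alpha)=\HH^0(C,\omega_C(-D))$ at a non-singular divisor, and intersect with $\HH^0(N,\omega_N)$. That part is correct and matches the paper step for step.

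For the general bound, however, there is a gap, and it is exactly the one you flag yourself. Your inequality rests on the containment $\Ker\bigl({}^t\dd_D\alpha|_{\HH^0(N,\omega_N)}\bigr)\subseteq \HH^0(N,\omega_N(-\beta^\ast D))$ at a point $D$ where $b(D)$ contains preimages of nodes, and there the object $\HH^0(C,\omega_C(-D))$ is not well defined ($D$ need not be Cartier) and $\tNd$ itself may be singular, so the classical evaluation-map description of ${}^t\dd\alpha$ cannot be "repeated verbatim" on $C$. The paper closes this gap by a device you do not use: since $\beta^\ast|_P:P\to\Pic^\ud(N)$ is a degree $2$ isogeny, composing $\phi$ with it does not change the codifferential, and $\beta^\ast\circ\alpha=\alpha_N\circ b$ factors everything through the Abel map $\alpha_N:\Nd\to\Pic^\ud(N)$ of the \emph{smooth} symmetric product of the normalization. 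There the classical result gives $\Ker({}^t\dd\alpha_N)=\HH^0(N,\omega_N(-\beta^\ast D))$ on the nose, and the only possible loss in passing to $\tilde{W}$ is the one extra dimension coming from the single local equation cutting $\tilde{W}$ out (the norm condition), whence the $+1$. Without this transfer to $\Nd$, your "bookkeeping at singular $D$" is not mere bookkeeping: you would have to control the kernel of ${}^t\dd b$ along the exceptional locus of the blowup and make sense of vanishing conditions at non-Cartier divisors, neither of which your proposal does. So the statement you want is true, but the argument as written does not yet establish the inequality at singular $D$; the missing idea is the factorization through the isogeny and $\alpha_N$.
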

\begin{proof}
    Recall the following commutative diagram defining $\phi$
    \[ \begin{tikzcd}
        \tilde{W} \arrow[rrrr,bend left=15, "\phi"] \arrow[d,"b\restr{W}"'] \arrow[r,hook] & \tNd \arrow[r,"\alpha"] \arrow[d,"b"] & \PPP \arrow[d,"\beta^\ast"] & P \arrow[l,hook'] &\Xi \arrow[l,hook'] \arrow[dl,"\beta^\ast\restr{\Xi}"] \\
        W \arrow[r,hook] \arrow[rrr,bend right=15, "\phi_N"'] & \Nd \arrow[r,"\alpha_N"] & \Pic^\ud(N) & \Xi' \arrow[l,hook']
    \end{tikzcd}\,. \]
    Let $D\in \tilde{W}$, with $D\nleq R$. $\beta^\ast\restr{P}:P\to \Pic^\ud(N)$ is a degree $2$ isogeny, so composing $\phi$ with it doesn't change the codifferential. By \cite[Lem. 2.3 p. 171]{arbarello} we have
    \[ \Ker({}^t\dd\alpha_N)=\HH^0(N,\omega_N(-\beta^\ast D)) \,. \]
    Localy near $D$, $\tilde{W}$ is smooth of codimension $1$ in $\tNd$, and $\tilde{W}\to W$ is the normalization, thus the codifferential is injective, thus
    \[ \dim \Ker({}^t\dd\phi)\leq \dim \Ker({}^t \dd \alpha_N)+1=\HH^0(N,\omega_N(-\beta^\ast D))+1 \,. \]
    \par 
    Now suppose $D$ non-singular. Then the proof of Lemma 2.3 page 171 in \cite{arbarello} can be repeated at verbatim locally near $D$ and thus
    \[ \Ker({}^t \dd_D \alpha)=\HH^0(C,\omega_C(-D)) \,. \]
    We then have the following commutative diagram 
    \[ 
    \begin{tikzcd}
     T^\vee_D \tilde{W} & T^\vee_D \tNd \arrow[l,two heads,"{}^t\dd \iota_{\tilde{W}}"] & T^\vee_{\alpha(D)} \PPP \arrow[d,equal] \arrow[l,"{{}^t\dd \alpha}"] & T^\vee_{\alpha(D)} P \arrow[l,hook'] \arrow[lll,"^t\dd \phi"',bend right=15] \arrow[d,equal]\\
     && \HH^0(C,\omega_C) & \HH^0(N,\omega_N) \arrow[l,hook']
\end{tikzcd}\,.
\]
Thus
\[ 0\neq \langle {}^t\dd \alpha(s_R) \rangle = \Ker( {}^t \dd \iota_{\tilde{W}} )\,, \]
and
\[ \Ker({}^t \dd \phi)=\left(\HH^0(C,\omega_C(-D))+\langle s_R \rangle \right)\cap \HH^0(N,\omega_N) \,. \]

\end{proof}
We have the following:
\begin{theorem}\label{Thm: Characteristic Cycle of Xi}
Suppose $\ud=(g)$ or $\ud=(1,g-1)$. If $g$ is even, then 
\[ \mathrm{CC}(\IC_\Xi)=\Lambda_\Xi\,. \]
If $g$ is odd, then
\[ \mathrm{CC}(\IC_\Xi)=\Lambda_\Xi+\sum_{x\in \Xiasing } 2 \Lambda_x\,, \]
where $\Xiasing\coloneqq \phi(b^{-1}(\{D\in W\,|\, D\leq R\}))$ and $\Lambda_x=N^\vee_x P$ is the conormal variety to the point $x\in P$.
\end{theorem}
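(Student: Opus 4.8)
The plan is to follow the strategy of Bressler--Brylinski \cite{BresslerBrylinski97}, using the birational morphism $\phi:\tilde W\to \Xi$ with its very mild source (smooth away from $\Wtasing$ by \ref{Cor: tilde W is smooth}, and with ordinary quadratic singularities of maximal rank along $\Wtasing$ by the lemma preceding \ref{Cor: tilde W is smooth}). First I would write $\IC_\Xi=\phi_{!\ast}$ of the corresponding complex on $\tilde W$, or more directly compare $\IC_\Xi$ with $R\phi_\ast\IC_{\tilde W}$; since $\phi$ is proper and birational, the decomposition theorem gives $R\phi_\ast \IC_{\tilde W}=\IC_\Xi\oplus(\text{skyscraper terms supported on }\Xiasing)$, because $\phi$ is an isomorphism over $\Xi^o$ and the only possible extra support is the finite set $\Xiasing=\phi(\Wtasing)$. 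Taking characteristic cycles (which are additive and commute with proper pushforward, $\mathrm{CC}(R\phi_\ast)=\phi_!\mathrm{CC}$), one gets
\[ \mathrm{CC}(\IC_{\tilde W}) \longmapsto \mathrm{CC}(\IC_\Xi)+\sum_{x\in\Xiasing} m_x\,\Lambda_x \]
for nonnegative integers $m_x$, and the whole problem reduces to (a) identifying $\mathrm{CC}(\IC_{\tilde W})$ and its image, and (b) computing the multiplicities $m_x$.

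For step (a): away from $\Wtasing$, $\tilde W$ is smooth, so there $\IC_{\tilde W}=\CC_{\tilde W}[g-1]$ and $\mathrm{CC}=\Lambda_{\tilde W}=$ the zero section (conormal to $\tilde W$ in itself). Its pushforward to $T^\vee P$ is exactly $\Lambda_\Xi$ over $\Xi^o$ by \ref{Prop: Lambda Xi is pushforward of Lambda theta} (or rather its $\tilde W$-analogue), plus possibly extra conormal-to-a-point pieces coming from the positive-dimensional fibers of $\phi$. By \ref{Prop: Fibers of Lambda Nd to omega N} and the fiber analysis used in the proof of Theorem~\ref{Thm: bielliptic Prym: fibers of Gauss map } the map $\phi$ is finite (indeed an isomorphism) over $\Xi^o$, so in fact $\mathrm{CC}(\IC_\Xi)$ and $\Lambda_\Xi$ agree over $\Xi^o$, and since both are closed conic Lagrangian the only discrepancy is over $\Xiasing$; but the singularities along $\Wasing$ are transverse (the divisor $R$ is reduced and its components move in independent directions on $\Nd$, as in the local computation $\prod(z_i^2+Q_i)=\text{const}$), so no component of $\Lambda_\Xi$ other than the main one is created, i.e. $\Lambda_\Xi$ is already irreducible. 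Hence $\mathrm{CC}(\IC_\Xi)=\Lambda_\Xi+\sum_{x\in\Xiasing}n_x\Lambda_x$ for some $n_x\ge 0$, and it remains to compute $n_x$.

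For step (b), the multiplicity $n_x$ is a purely local invariant of the singularity $(\Xi,x)$, which by \ref{Cor: points in Xiasing a isolated singularities of maximal rank} is (analytically) the affine cone $V(x_1^2+\cdots+x_g^2)\subset\AAA^g$ over a smooth quadric $\mathcal Q\subset\PP^{g-1}$. I would compute $n_x$ via the standard local formula: for an isolated hypersurface singularity, the coefficient of $\Lambda_x$ in $\mathrm{CC}(\IC)$ equals $(-1)^{\dim\Xi}\big(\chi(\text{Milnor fiber of }\IC\text{ at }x)-(\text{generic stalk Euler char})\big)$, equivalently it is governed by the reduced cohomology of the local Milnor fiber $F$, which for the ordinary quadratic cone of dimension $g-1$ is homotopy equivalent to a sphere $S^{g-1}$; thus $\widetilde H^\bullet(F)$ is concentrated in degree $g-1$ and is one-dimensional. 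Tracking signs through the intersection-complex normalization $[g-1]$, this contributes $0$ when $g-1$ is odd (i.e. $g$ even) and $2$ when $g-1$ is even (i.e. $g$ odd) — the extra factor $2$ arising because $\IC_\Xi$ for this cone differs from the shifted constant sheaf exactly by the top-degree class of the quadric, counted with multiplicity $\dim H^{g-2}(\mathcal Q)$, which is $1$, together with a matching contribution making the characteristic-cycle coefficient $2$ (the familiar fact that an ordinary double point of even dimension contributes $2$ and of odd dimension contributes $0$ to $\mathrm{CC}(\IC)$). Plugging $n_x=2$ for $g$ odd and $n_x=0$ for $g$ even into the displayed formula yields the theorem.

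\medskip
The main obstacle I expect is step~(b): getting the local multiplicity exactly right — in particular the factor $2$ and the parity dependence — requires care with the normalization conventions for $\IC$ and with the precise relation between $R\phi_\ast\IC_{\tilde W}$ and $\IC_\Xi$ near the maximal-rank quadratic points (one must check that $\tilde W$, which itself has a quadratic singularity there, already has $\IC_{\tilde W}$ differing from $\CC[g-1]$, so the skyscraper contributions on the two sides must be compared, not just read off on one side). This is exactly the point where Bressler--Brylinski's argument needs to be adapted rather than quoted, since in the classical non-hyperelliptic Jacobian case $\tilde W=W$ is smooth and no such correction term appears; here the correction term in the odd case is precisely the shadow of these extra singularities.
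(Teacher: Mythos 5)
The central gap is in your step (a). You assert that $\phi$ is ``finite (indeed an isomorphism) over $\Xi^o$'', and you use this both to obtain the clean decomposition $R\phi_\ast\IC_{\tilde W}=\IC_\Xi\oplus(\text{skyscrapers on }\Xiasing)$ and to conclude that $\mathrm{CC}(\IC_\Xi)$ and $\Lambda_\Xi$ can only differ over the finite set $\Xiasing$. This is false: by \ref{Prop: description of tNd} the fibre of $\phi$ over a line bundle $L$ is $\PP\HH^0(C,L)$, so $\phi$ has positive-dimensional fibres over the whole locus $W^1_\ud(C)\cap P$, which is nonempty of dimension $g-3$ by \ref{Prop: Martens Thoerem for singular bielliptic curves}. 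What is true, and what must be proved, is that $\phi^o:\tilde W^o\to\Xi^o$ is a \emph{small} resolution (fibre dimension $r$ over a stratum of codimension $2r$), which gives $R\phi^o_\ast\CC_{\tilde W^o}[g-1]=\IC_{\Xi^o}$ with no extra summands. But even granting smallness, this does not by itself exclude extra Lagrangian components of $\mathrm{CC}(\IC_{\Xi^o})$ supported over the positive-dimensional singular strata of $\Xi^o$: one only has the Kashiwara--Schapira inclusion $\mathrm{CC}(\IC_{\Xi^o})\subseteq\phi_\pi\bigl({}^t\dd\phi^{-1}(N^\vee_{\tilde W^o}\tilde W^o)\bigr)$, and the real work is to show that the right-hand side is irreducible of dimension $g$. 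That is exactly what the paper's proof does, by stratifying $W^o$ into the loci $W_k$, $V_k$ of divisors of the form $\pi^\ast M+F$ and using the bound $\dim\Ker({}^t\dd_D\phi)\le \hh^0(N,\omega_N(-\beta^\ast D))+1$ from \ref{Lem: Kernel of dd alpha in singular case} to check that over every stratum with $k>0$ the relevant fibre has total dimension at most $g-1$. Your proposal contains no substitute for this estimate, and without it the assertion that ``the only discrepancy is over $\Xiasues$'' --- i.e.\ that no conormal variety to a higher Brill--Noether stratum occurs in $\mathrm{CC}(\IC_\Xi)$ --- is unsupported. (This is also precisely the point of Bressler--Brylinski's argument in the Jacobian case, so it cannot be bypassed.)

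Your step (b) is essentially fine at the level of rigor the paper itself adopts: the coefficient $0$ (for $\dim\Xi=g-1$ odd) or $2$ (for $g-1$ even) at an isolated quadratic singularity of maximal rank is invoked in the paper as well-known, using \ref{Cor: points in Xiasing a isolated singularities of maximal rank}. Note, however, that the complication you flag about $\IC_{\tilde W}$ at the nodes of $\tilde W$ itself is an artifact of your global decomposition-theorem setup: the paper avoids it by removing $\Wtasing$ and $\Xiasing$, running the small-resolution argument on the smooth source $\tilde W^o$, and then treating the points of $\Xiasing$ separately as isolated singularities of $\Xi$ where $\phi$ is a local isomorphism, so that only the standard quadric-cone computation is needed there. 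I would recommend restructuring your argument along those lines and supplying the missing dimension count.

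\typo{In the preceding paragraph ``$\Xiasues$'' should read ``$\Xiasing$''.}

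\makeatletter\@ifundefined{typo}{\let\typo\relax}{}\makeatother
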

\begin{proof}
Let $\ud=(g)$ or $\ud=(1,g-1)$. By \ref{Cor: tilde W is smooth}, \ref{Prop: description of tNd} and \ref{Prop: Martens Thoerem for singular bielliptic curves} we know that $\phi^o:\tilde{W}^o\to \Xi^o$ is a small resolution of singularities. Thus by \cite[Prop. 5.4.4]{KashiwaraShapira1990} we have
\[ \mathrm{CC}(\IC_{\Xi^o}) \subseteq \phi_\pi({}^t \dd\phi^{-1} (N^\vee_{\tilde{W}^o} \tilde{W}^o)) \]
where ${}^t\dd \phi$ is the codifferential and $\phi_\pi$ is the projection
\[ T^\vee \tilde{W}^o \overset{{}^t \dd \phi}{\longleftarrow} \tilde{W}^o\times_{\Xi^o} T^\vee P \overset{\phi_\pi}{\longrightarrow} T^\vee P \,, \]
and $N^\vee_{\tilde{W}^o} \tilde{W}^o\subset T^\vee \tilde{W}^o$ is the zero section. Let $\Lambda={}^t \dd \phi^{-1}(N^\vee_{\tilde{W}^o}\tilde{W}^o)$. We define the following stratification of $W^o \coloneqq W\setminus \Wasing$
\begin{itemize}
    \item $W_k\subset W^o$ is the locus of non-singular divisors $D\in W$ which can be written as
    \[ D=\pi^\ast M+F\,, \]
    with $M\in E_k$ and $F$ $\pi$-simple. We have $\dim W_k=g-k-1$.
    \item $V_k\subset W^o$ is the locus of singular divisors $D\in W^o$ which can be written as
    \begin{align*} 
    D&=\pi^\ast M+F \,,
    \end{align*}
    with $M\in E_k$, and $F$ $\pi$-simple. We have $\dim V_k=g-k-2$.
\end{itemize}
For a locus $Z\subset W^o$, denote $\Lambda\restr{Z}$ the fiber of $\Lambda$ above $Z$. By Lemma \ref{Lem: Kernel of dd alpha in singular case} we have
\begin{enumerate}
    \item The fibers of $\Lambda\restr{W_0}\to W_0$ are of dimension $1$, Thus $\Lambda\restr{W_0}$ is of dimension $g$.
    \item Let $0<k\leq g-1$, and $D=\pi^\ast M+F\in W_k$. An element of $\HH^0(C,\omega_C(-D))$ vanishes at two conjugate points thus must be in the $(+)$-eigenspace of $\tau$, $\HH^0(C,\omega_C)^+=\HH^0(N,\omega_N)$. Thus
    \[ \HH^0(C,\omega_C(-D))=\HH^0(N,\omega_N(-D))\,, \]
    thus $\Ker({}^t \dd_D\phi)=\HH^0(N,\omega_N(-D))$ which is of dimension $k$ by Riemann-Roch. Thus $\dim \Lambda\restr{W_k}=g-1$.
    \item Let $0\leq k \leq g-2$ and $D\in V_k$. By \ref{Lem: Kernel of dd alpha in singular case} we have
    \[ \dim \Ker({}^t\dd_D \phi)\leq \hh^0(N,\omega_N(-\beta^\ast D))+1=k+1 \,, \]
    using Riemann-Roch. Thus $\dim \Lambda\restr{V_k}\leq g-1$.
\end{enumerate}
It follows that $\Lambda$, and thus $\phi_\pi(\Lambda)$ is irreducible of dimension $g$, which proves the theorem away from $\Xiasing$. Finally, the points in $\Xiasing$ are isolated quadratic singularities of maximal rank by \ref{Cor: points in Xiasing a isolated singularities of maximal rank}. For such a singularity, it is well-known that the characteristic cycle is irreducible if $g$ is even, and contains the conormal variety to the singular points with multiplicity $2$ if $g$ is odd.
\end{proof}

\printbibliography

\end{document}